\documentclass[10pt,reqno]{amsart}
\usepackage{amssymb,mathrsfs,graphicx}
\usepackage{ifthen}
\usepackage[hidelinks]{hyperref}
\usepackage[margin=1in]{geometry}
\usepackage{caption}
\usepackage{sidecap}
\usepackage{rotating}
\usepackage{enumitem}

\usepackage{cancel}

\usepackage{colortbl}
\definecolor{black}{rgb}{0.0, 0.0, 0.0}
\definecolor{red}{rgb}{1.0, 0.5, 0.5}
\definecolor{RevisionRed}{rgb}{1,0,0}

\provideboolean{shownotes} 
\setboolean{shownotes}{true} 
\newcommand{\margnote}[1]{
\ifthenelse{\boolean{shownotes}}%
{\marginpar{\raggedright\tiny\texttt{#1}}}%
{}%
}
\newcommand{\hole}[1]{
\ifthenelse{\boolean{shownotes}}%
{\begin{center} \fbox{ \rule {.25cm}{0cm} \rule[-.1cm]{0cm}{.4cm}
\parbox{.85\textwidth}{\begin{center} \texttt{#1}\end{center}} \rule
{.25cm}{0cm}}\end{center}} {} }

\title[Global existence for an asymmetric bipolar Euler--Poisson system]
{Global existence and time decay for a bipolar Euler--Poisson system
with one pressureless and undamped fluid}

\author[Choi]{Young-Pil Choi}
\address{Department of Mathematics, Yonsei University, Seoul 03722, Republic of Korea}
\email{ypchoi@yonsei.ac.kr}

 \author[Tang]{Houzhi Tang}
\address{School of Mathematics and Statistics, Anhui Normal University, Wuhu 241002, P.R. China}
	\email{houzhitang@ahnu.edu.cn}

\author[Zou]{Weiyuan Zou}
\address{College of Mathematics and Physics, Beijing University of Chemical Technology, Beijing 100029, P.R. China}
\email{zwy@amss.ac.cn}

\numberwithin{equation}{section}

\newtheorem{thm}{Theorem}[section]
\newtheorem{lem}[thm]{Lemma}
\newtheorem{prop}[thm]{Proposition}
\newtheorem{rem}{Remark}[section]

\makeatletter
\def\moverlay{\mathpalette\mov@rlay}
\def\mov@rlay#1#2{\leavevmode\vtop{%
   \baselineskip\z@skip \lineskiplimit-\maxdimen
   \ialign{\hfil$\m@th#1##$\hfil\cr#2\crcr}}}
\newcommand{\charfusion}[3][\mathord]{
    #1{\ifx#1\mathop\vphantom{#2}\fi
        \mathpalette\mov@rlay{#2\cr#3}
      }
    \ifx#1\mathop\expandafter\displaylimits\fi}
\makeatother

\begin{document}
\allowdisplaybreaks

\date{\today}


\keywords{Bipolar Euler--Poisson system, pressureless flow, partial damping, global existence, time decay, regularity-loss structure.}

\begin{abstract}
We study the Cauchy problem for a three-dimensional bipolar Euler--Poisson system in which one fluid is pressureless and undamped, while the other is subject to momentum relaxation. For sufficiently small smooth perturbations of a constant equilibrium, we prove the global existence and uniqueness of smooth solutions under an irrotationality assumption on the initial velocity of the pressureless fluid, together with algebraic time-decay estimates. The main difficulty is that the velocity of the pressureless fluid is dissipated only indirectly through the Poisson coupling, and this mechanism degenerates strongly at high frequencies, leading to a regularity-loss structure. We overcome this difficulty by combining refined Green-function estimates, a low--middle--high frequency decomposition, and high-order nonlinear energy estimates adapted to the asymmetric regularity hierarchy. The result establishes a global small-data theory for this asymmetric regime, in which pressure and damping are simultaneously absent from the same fluid.
\end{abstract}

\maketitle  

\tableofcontents

%
%
%
%

\section{Introduction}

%
%
%
%
%
%

\subsection{The asymmetric model and its motivation}

Euler--Poisson systems arise naturally as hydrodynamic descriptions of charged particles in semiconductor devices and plasmas; see \cite{MRS90}. In a two-carrier description, two charged species evolve according to their own transport dynamics while interacting through a common self-consistent electric field. At the level of a general hydrodynamic model, the momentum equation for each carrier may contain both a pressure contribution and a momentum-relaxation term, schematically of the form
\[
\partial_t(\rho_i u_i) +\operatorname{div}(\rho_i u_i\otimes u_i) +\theta_i\nabla P_i(\rho_i) =\zeta_i\rho_i\nabla\Phi-\nu_i\rho_i u_i.
\]
Here the coefficients $\theta_i$ and $\nu_i$ reflect, after nondimensionalization, thermal or acoustic scales and momentum-relaxation rates of the corresponding species, while $\zeta_i$ represents the sign of the charge. These physical parameters need not be comparable for the two carriers. Regimes in which one species is much colder or relaxes much more weakly than the other naturally lead to strongly asymmetric reduced models.

The system studied in this paper may be viewed as a degenerate limiting case of such a two-carrier Euler--Poisson model. One carrier is taken to be both pressureless and undamped, whereas the second carrier retains pressure and momentum relaxation. More precisely, we consider the following three-dimensional bipolar Euler--Poisson system:
\begin{equation}\label{main1}
\left\{
\begin{aligned}
&\partial_t \rho_1+\operatorname{div}(\rho_1u_1)=0,\\
&\partial_t(\rho_1u_1)+\operatorname{div}(\rho_1u_1\otimes u_1) =\rho_1\nabla\Phi,\\
&\partial_t \rho_2+\operatorname{div}(\rho_2u_2)=0,\\
&\partial_t(\rho_2u_2)+\operatorname{div}(\rho_2u_2\otimes u_2) +\nabla P(\rho_2) =-\rho_2\nabla\Phi-\rho_2u_2,\\
&\Delta\Phi=\rho_1-\rho_2.
\end{aligned}
\right.
\end{equation}
Here $\rho_i=\rho_i(t,x)>0$ and $u_i=u_i(t,x)\in\mathbb R^3$ $(i=1,2)$ denote the densities and velocities, respectively, and $\Phi=\Phi(t,x)$ is the electric potential. The pressure law is given by
\[
P(\rho_2)=\rho_2^\gamma,\quad \gamma>1.
\]
The Cauchy problem is supplemented with the initial data
\[
(\rho_1,u_1,\rho_2,u_2)|_{t=0} =(\rho_{10},u_{10},\rho_{20},u_{20})(x)
\]
and the far-field condition
\[
\lim_{|x|\to\infty}(\rho_1,u_1,\rho_2,u_2)(x,t) =(\rho_*,0,\rho_*,0).
\]
The electric field is determined by
\[
\nabla\Phi=-\nabla(-\Delta)^{-1}(\rho_1-\rho_2).
\]
Only $\nabla\Phi$ enters the equations, so the choice of the additive constant in $\Phi$ does not affect the system. Throughout the paper, we normalize the equilibrium density to $\rho_*=1$.

From the physical point of view, \eqref{main1} represents a mixed thermal and relaxation regime in which the first species is effectively cold and has a vanishing relaxation rate, while the second species remains pressurized and dissipative. Our purpose is not to derive this regime from a particular microscopic scaling, but rather to study the dynamics in this degenerate regime of the bipolar Euler--Poisson system. This degeneracy has important mathematical consequences. When pressure and relaxation act on both components, they provide stabilizing mechanisms for both fluids. Once both mechanisms are removed from one component, the dissipative symmetry between the two species is broken, and dissipation of the first fluid can only be transmitted indirectly through the Poisson coupling. As shown below, this indirect transfer becomes highly degenerate at high frequencies.

%
%
%
%
%
%
\subsection{Related results}

We next recall some related results. A classical bipolar hydrodynamic model contains pressure and momentum relaxation in both momentum equations. Global smooth solvability near a constant equilibrium state in several space dimensions was established in \cite{AJ03}. The large-time behavior of bipolar hydrodynamic models was studied in \cite{GHL03,HMW11}. Global existence and long-time behavior for the two-fluid Euler--Maxwell system, together with analogous results for the two-fluid Euler--Poisson system, were obtained in \cite{Pen12}. Global existence and $L^2$-decay for the three-dimensional bipolar Euler--Poisson system were proved by combining Green-function analysis with energy estimates in \cite{LY12}. Refined decay estimates based on energy and interpolation arguments involving negative Sobolev or Besov norms were obtained in \cite{WW14}. Global well-posedness in critical regularity spaces was studied in \cite{PX13}. Optimal decay estimates in Besov spaces for the Euler--Poisson two-fluid system were established in \cite{XK15}, while pointwise $L^p$ estimates were obtained in \cite{WL16}. These results demonstrate the strong stabilizing effect of pressure and momentum relaxation when they act on both carrier velocities.

Closer to the asymmetric setting considered here, the global existence of small smooth solutions for a partially damped two-fluid Euler--Poisson system on $\mathbb T^3$ was established in \cite{ZZ19}, under a high-regularity assumption and an irrotationality condition on the undamped velocity. In that model, pressure is retained in both momentum equations, while damping acts on only one of the two velocities. The present system is more degenerate in that the undamped component is also pressureless. Moreover, our problem is posed on $\mathbb R^3$, where the analysis must capture not only global regularity but also large-time decay and the frequency-dependent transfer of dissipation.

Closer to the asymmetric setting considered here, the global existence of small smooth solutions for a partially damped two-fluid Euler--Poisson system on $\mathbb T^3$ was established in \cite{ZZ19}, under a high-regularity assumption and an irrotationality condition on the undamped velocity. In that model, pressure is retained in both momentum equations, while damping acts on only one of the two velocities. The present system is more degenerate in that the undamped component is also pressureless. Moreover, our problem is posed on $\mathbb R^3$, where the analysis must capture not only global regularity but also large-time decay and the frequency-dependent transfer of dissipation. A related asymmetric dissipation mechanism was studied for a partially damped two-fluid Euler--Maxwell system on $\mathbb T^3$ in \cite{LZ23}, where global small-data well-posedness and time-decay estimates were established. Pressureless two-fluid Euler--Poisson systems have also been considered from a different perspective: the long-wave dispersive limit in $\mathbb R^3$ toward the Zakharov--Kuznetsov equation was justified in \cite{Li21}.

Related results are also available for one-fluid Euler--Poisson systems. For the three-dimensional Euler--Poisson equations, global small irrotational solutions were constructed by exploiting the dispersive Klein--Gordon structure generated by the Poisson coupling in \cite{Guo98}. Related global small-data theories for ion dynamics and non-neutral electron flows were developed in \cite{GP11,GMP13}. Removing pressure introduces a further degeneracy. The critical-threshold theory for pressureless Euler--Poisson equations was initiated in \cite{ELT01}. For one-dimensional pressureless Euler--Poisson equations with non-vanishing background states, a corresponding theory was developed in \cite{CKKT26}. In particular, the role of a negative homogeneous Sobolev condition associated with charge neutrality was identified. For pressureless Euler--Poisson equations with quadratic confinement, a sharp criterion for radial global smooth solutions in several dimensions was obtained in \cite{CS23}. These results show that, once either pressure or direct relaxation is lost, global regularity must rely on more delicate dispersive, geometric,
or structural mechanisms.

%
%
%
%
%
%

\subsection{Main result}
 
The novelty of the present problem lies in the simultaneous absence of pressure and damping in the same fluid. In the partially damped model studied in \cite{ZZ19}, only one velocity is directly damped, but pressure is retained in both fluids. In contrast, the undamped fluid in \eqref{main1} is also pressureless, so that both direct relaxation and acoustic coupling are absent from the same component. Compared with the standard bipolar models considered in \cite{AJ03,LY12,WW14}, where both velocities are directly dissipated, the first fluid in our system can be stabilized only through its coupling to the second fluid via the Poisson field. This additional degeneracy changes both the high-frequency dynamics and the nonlinear regularity structure. The resulting analytical difficulties and the mechanisms used to overcome them are described in the next subsection.

To state our main result, we set $c=\sqrt\gamma$ and introduce
\begin{equation}\label{newvariable}
a_1 :=\rho_1-1,\quad a_2 := \frac{2}{\gamma-1} \left(\sqrt{\gamma\rho_2^{\gamma-1}}-\sqrt\gamma\right).
\end{equation}
Then
\[
\rho_2 =\left(1+\frac{\gamma-1}{2c}a_2\right)^{\frac{2}{\gamma-1}}, \quad \mathcal R(a_2):=\rho_2-1-\frac1c a_2.
\]
We further set
\[
b:=a_1-\frac1c a_2, \quad Q:=\rho_1-\rho_2=b-\mathcal R(a_2).
\]
Since $\mathcal R(0)=\mathcal R'(0)=0$, one has $\mathcal R(a_2)=O(a_2^2)$ near the equilibrium. Thus $b$ is the linear part of the physical charge $Q$. We denote by $(a_{10},a_{20})$ the corresponding initial perturbations.

\begin{thm}\label{thm-main}
Let $N\ge 10$ and assume that $\operatorname{curl}u_{10}=0$. Define
\begin{align}\label{main-smallness}
\varepsilon_N:=&\ \|a_{20}\|_{\dot H^{-1}} +\left\|a_{10}-\frac1c a_{20}\right\|_{\dot H^{-1}} +\|a_{10}\|_{H^{N-1}} +\|(u_{10},a_{20},u_{20})\|_{H^N}.
\end{align}
Assume also that, for some $\kappa_0>0$,
\[
1+a_{10}(x)\ge\kappa_0,\quad 1+\frac{\gamma-1}{2c}a_{20}(x)\ge\kappa_0, \quad x\in\mathbb R^3.
\]
There exists $\varepsilon_0=\varepsilon_0(\gamma,\kappa_0)>0$ such that, if $\varepsilon_N\le\varepsilon_0$, then the Cauchy problem \eqref{main1} has a unique global classical solution satisfying
\begin{align*}
a_1&\in C([0,\infty);H^{N-1}) \cap C^1([0,\infty);H^{N-2}),\\
(u_1,a_2,u_2)&\in C([0,\infty);H^N) \cap C^1([0,\infty);H^{N-1}),\\
b&\in C([0,\infty);\dot H^{-1}),\quad Q \in C([0,\infty);\dot H^{-1}),\quad \nabla\Phi \in C([0,\infty);H^N).
\end{align*}
Moreover, the densities remain uniformly positive and, for all $t\ge0$,
\begin{align}\label{main-decay-statement}
\begin{aligned}
\|(a_1,a_2)(t)\|_{L^2} &\le C\varepsilon_N(1+t)^{-\frac12},\quad \|\nabla(a_1,a_2)(t)\|_{H^1} \le C\varepsilon_N(1+t)^{-1},\\
\|(\operatorname{div}u_1,\operatorname{div}u_2)(t)\|_{H^2}
&\le C\varepsilon_N(1+t)^{-1},\quad \|(a_1,u_1,a_2,u_2)(t)\|_{W^{1,\infty}} \le C\varepsilon_N(1+t)^{-\frac54},
\end{aligned}
\end{align}
where the positive constant $C$ is independent of time.
\end{thm}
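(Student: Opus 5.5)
The plan is the standard small-data continuation argument: local existence, a priori estimates in a suitably weighted energy/decay functional, then closure by continuity. The new ingredient is that the linearized Green function must carry the regularity loss produced by the indirect dissipation of $u_1$.

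\textbf{Linearized problem.} I work with the reformulated system for $(a_1,u_1,a_2,u_2)$ and take the Helmholtz decomposition. The assumption $\operatorname{curl}u_{10}=0$ propagates, because the $u_1$-equation $\partial_t u_1+u_1\cdot\nabla u_1=\nabla\Phi$ is a gradient-forced Burgers equation, so $\operatorname{curl}u_1\equiv0$. Hence $u_1=\nabla\Lambda^{-2}\operatorname{div}u_1$. For $u_2$, the curl part is simply damped. It then suffices to analyse the $4\times4$ Fourier symbol acting on $(\hat a_1,\widehat{\operatorname{div}u_1},\hat a_2,\widehat{\operatorname{div}u_2})$, which is coupled through the Poisson term $|\xi|^{-2}(\hat a_1-\hat a_2/c)$.

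I will expand the eigenvalues in three frequency regimes.
\begin{itemize}
\item At low frequency, the charge mode $b$ is damped at rate $O(1)$, because the Poisson field gives a Klein--Gordon-type gap. The total-mass mode diffuses like $e^{-c|\xi|^2t}$.
\item At high frequency, the pressureless fluid decouples. Its damping comes only from the coupling $|\xi|^{-2}$, which gives $\operatorname{Re}\lambda\sim-|\xi|^{-4}$ or similar.
\end{itemize}
This yields estimates of the form
\[
|\hat G(t,\xi)|\lesssim e^{-c|\xi|^2t}\ (\text{low}),\quad e^{-ct}\ (\text{middle}),\quad e^{-ct|\xi|^{-4}}\ (\text{high}).
\]
From these I get $L^2$ decay rates for $\partial^k G*U_0$ that lose derivatives: a $(1+t)^{-\ell/4}$-type rate costs $\ell$ extra derivatives at high frequency. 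The $\dot H^{-1}$ assumptions on $a_{20}$ and on $b_0$ supply the extra $|\xi|^{-1}$ weight needed so that the Poisson term is harmless at low frequency and yields the $L^2$ rate $(1+t)^{-1/2}$.

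\textbf{Energy estimates.} I will establish two families of energy estimates.
\begin{itemize}
\item A top-order energy inequality
\[
\frac{d}{dt}\mathcal E_N+\mathcal D_N\le C\|(u_1,u_2,a_2,\nabla a_1)\|_{W^{1,\infty}}\,\mathcal E_N.
\]
Here $\mathcal E_N$ controls $\|a_1\|_{H^{N-1}}$, $\|(u_1,a_2,u_2)\|_{H^N}$ and $\|\nabla\Phi\|_{H^N}$. The dissipation $\mathcal D_N$ is strong for $u_2$, for $a_2$ (via a cross term $\langle u_2,\nabla a_2\rangle$), and for $\nabla\Phi$. It is weak, with derivative loss, for $(a_1,u_1)$; these are obtained through cross terms such as $\langle u_1,\nabla\Phi\rangle$ at lower order.
\item The half-derivative asymmetry $a_1\in H^{N-1}$ versus $u_1\in H^N$ must be handled exactly. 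In $\partial_t a_1+u_1\cdot\nabla a_1+(1+a_1)\operatorname{div}u_1=0$, the top term $\partial^{N-1}\operatorname{div}u_1$ is controlled by $\|u_1\|_{H^N}$. In the $u_1$-equation, $\partial^N\nabla\Phi$ is controlled through $\Delta\Phi=Q$, which needs only $a_1\in H^{N-1}$. So I will close the pair using the natural weighted symmetrizer $\langle\partial^{N-1}a_1,\ldots\rangle$ together with the elliptic relation.
\end{itemize}
A lower-order weighted energy with time weights $(1+t)^{\ell}$ and fewer derivatives will yield the decay rates.

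\textbf{Closure and main obstacle.} I define
\[
M(t)=\sup_{s\le t}\Big\{(1+s)^{1/2}\|(a_1,a_2)\|_{L^2}+(1+s)\big(\|\nabla(a_1,a_2)\|_{H^1}+\|\operatorname{div}u_i\|_{H^2}\big)+(1+s)^{5/4}\|U\|_{W^{1,\infty}}\Big\}.
\]
I bound the decay norms by Duhamel's formula using the Green-function estimates, and I absorb the derivative loss with the high-regularity energy bound; this is why $N\ge10$ is needed. The $W^{1,\infty}$ rate $(1+t)^{-5/4}$ is integrable. Consequently the Gronwall factor in the top-order inequality remains bounded, giving $\mathcal E_N\lesssim\varepsilon_N^2$ uniformly. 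This in turn gives $M(t)\lesssim\varepsilon_N+M(t)^2$, and continuity closes the argument.

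I expect the main obstacle to be the high-frequency Duhamel terms for the pressureless component. There the decay $e^{-ct|\xi|^{-4}}$ converts time decay into derivatives, so the nonlinearity $u_1\cdot\nabla u_1$ must be estimated in $H^{N-k}$ with enough spare derivatives to interpolate against the bounded top energy. I would first try to trade $|\xi|^{-4\ell}$ factors with $\ell\le 5/4+$ against roughly five extra derivatives. If that fails, I would use a time-weighted high-frequency energy estimate directly rather than the Green function. Uniqueness follows from a standard $L^2$ difference estimate, and positivity of the densities follows from smallness together with Sobolev embedding.
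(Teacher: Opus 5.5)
Your architecture is the paper's. It consists of propagated irrotationality of $u_1$; the $4\times4$ symbol in $(a_1,\operatorname{div}u_1,a_2,\operatorname{div}u_2)$; a low/middle/high splitting with $\operatorname{Re}\lambda\sim-|\xi|^{-4}$ at high frequency; and the trade $e^{-\eta t|\xi|^{-4}}\lesssim(1+t)^{-5/4}|\xi|^{5}$. The paper uses exactly $\|F\|_{H^8}\lesssim\delta\,\mathcal Z$ here, whence $N\ge10$. It also shares exponential damping of $\operatorname{curl}u_2$, the same decay functional, and a top-order energy closed by Gr\"onwall with the integrable weight $\mathcal Z\lesssim(1+t)^{-5/4}$.

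As written, however, your low-frequency input has a genuine gap. A scalar bound $|\widehat G|\lesssim e^{-c|\xi|^2t}$ does not give the rates in your $M$. The data $u_{10},u_{20}$ are only in $L^2$, and the factor $|\xi|$ gained from $q_{i0}=\operatorname{div}u_{i0}$ is lost again when you rebuild $u_i=-\nabla(-\Delta)^{-1}q_i$. You would get only $\|u_i^\ell(t)\|_{L^\infty}\lesssim(1+t)^{-3/4}$, which is not integrable, so the Gr\"onwall step does not close; and only $\|q_i^\ell(t)\|_{L^2}\lesssim(1+t)^{-1/2}$.

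The missing ingredient is the entrywise structure of the diffusive projector $P_4$. The diffusive mode is charge-neutral ($a_1\approx a_2/c$) with slaved velocities, so the $q_1,q_2$ rows of $P_4$ are $O(|\xi|^2)$, and $O(|\xi|^4)$ in the $a_1$ column. This is the matrix $\mathcal A$ in Lemma~\ref{lem-green-estimates}. Combined with the divergence form of $g_1,g_2,g_4$ and with $a_{20}\in\dot H^{-1}$, it yields $(1+t)^{-5/4}$ for $u_i$ in $W^{1,\infty}$ and $(1+t)^{-1}$ for $q_i$ in $H^2$.

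A few further points need correcting.

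\textbf{Top-order symmetrizer.} The top-order Poisson coupling cancels exactly only if the level-$(s-1)$ density energy is that of the charge. The paper pairs $\|\Lambda^{s-1}b\|_{L^2}^2$, $b=a_1-\frac1c a_2$, with $\|\Lambda^{s}(u_1,a_2,u_2)\|_{L^2}^2$; your $\|\nabla\Phi\|_{\dot H^{s}}^2$, i.e.\ the exact charge $Q$, also works. A symmetrizer built on $\|\Lambda^{N-1}a_1\|_{L^2}^2$ alone leaves linear cross terms like $-\frac1c\langle\Lambda^{N-1}a_2,\Lambda^{N-1}\operatorname{div}u_1\rangle$, which cannot be bounded by $\mathcal Z\,\mathcal E_N$.

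\textbf{Extra dissipation.} The additional dissipation you build into $\mathcal D_N$, and the time-weighted lower-order energy, are unnecessary: the paper keeps only the damping of $u_2$ and obtains all decay from Duhamel. Moreover, ``strong'' dissipation of $\nabla\Phi$ is false at high frequency, where $\nabla\Phi\approx-\mathscr R\Lambda^{-1}a_1$ lies in the weakly damped modes $\lambda\approx\pm i$.

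\textbf{Gr\"onwall weight.} Your weight $\|\nabla a_1\|_{W^{1,\infty}}$ asks for $\nabla^2a_1\in L^\infty$. Your $M$ does not control this, and it is not needed, since the commutator estimates involve only $\|\nabla a_1\|_{L^\infty}$.

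\textbf{Uniqueness.} This is not a plain $L^2$ difference estimate. The term $\operatorname{div}\delta u_1$ in the $\delta a_1$ equation has no symmetric partner in the pressureless momentum equation. Differences must therefore be measured in the same asymmetric norm one level lower, which is $\mathscr H_{N-1}$ in the paper.
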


\begin{rem}
The isothermal case $\gamma=1$ can be treated by replacing the density variable in \eqref{newvariable} with
\[
a_1=\rho_1-1,\quad a_2=\log\rho_2.
\]
In this case $c=1$, and the quadratic terms proportional to $(\gamma-1)/2$ in the transformed system disappear. The linear spectral structure and the nonlinear energy argument otherwise retain the same basic form. We omit the details.
\end{rem}

%
%
%
%
%
%
\subsection{Analytical challenges and proof strategy}

We now explain the main difficulties caused by the asymmetric structure of \eqref{main1} and the ideas used to overcome them. A central point is that the absence of pressure and direct relaxation in the first fluid cannot be treated merely as the loss of a single dissipative term in the energy estimate. It changes both the linear dissipative structure and the regularity hierarchy required for the nonlinear problem.

%
%
%
%
%
%
\subsubsection*{Indirect and degenerate dissipation of the first fluid.}

The first difficulty is the complete absence of direct damping in the equation for $u_1$. The physical energy identity controls the kinetic energy of $u_1$, but its dissipation acts only on $u_2$. Thus the standard energy method does not provide an estimate of the form
\[
\int_0^\infty \|u_1(t)\|_{L^2}^2\,dt<\infty,
\]
and any decay of $u_1$ must be generated indirectly through its Poisson interaction with the second fluid.

This indirect mechanism depends strongly on the frequency. Introducing the longitudinal variables
\[
q_i=\operatorname{div}u_i,
\]
we analyze the spectrum of the corresponding linearized system. At low frequencies, one mode is diffusive, while the remaining modes are uniformly stable. The spectrum is also uniformly separated from the imaginary axis at middle frequencies. At high frequencies, however, two eigenvalues approach the imaginary axis, with real parts of order $-|\xi|^{-4}$, as proved in Section \ref{S4}. Hence the damping acting on the second carrier is transferred to the first carrier through the Poisson field, but this transfer degenerates as $|\xi|\to\infty$.

The corresponding high-frequency modes do not satisfy a uniform exponential decay estimate. This regularity-loss structure is one of the principal analytical features of the system and explains why the usual dissipative estimates for bipolar Euler--Poisson systems with damping in both momentum equations cannot be applied directly. For general background on regularity-loss dissipation in hyperbolic systems with non-symmetric relaxation, see \cite{UDK12}.

%
%
%
%
%
%

\subsubsection*{Trading high regularity for time-integrable decay.}

The high-frequency degeneration creates a second difficulty: temporal decay can be recovered only at the expense of additional spatial derivatives. More precisely, for some $ \eta >0$, every $\ell\ge0$, and $|\xi|\ge1$,
\[
e^{- \eta  t|\xi|^{-4}} \lesssim (1+t)^{-\ell/4}|\xi|^\ell.
\]
Thus every additional gain in time decay requires additional high-frequency regularity. This forces us to work at a relatively high Sobolev level and to combine the Green-function analysis with the nonlinear energy method, rather than deriving decay directly from the physical energy inequality.

In the present argument, five additional derivatives are used at high frequencies to obtain the time-integrable rate
\[
(1+t)^{-5/4}.
\]
After estimating the Duhamel terms separately in the low-, middle-, and high-frequency regions, we obtain
\[
\|(a_1,u_1,a_2,u_2)(t)\|_{W^{1,\infty}} \lesssim \varepsilon_N(1+t)^{-5/4}.
\]
Consequently,
\[
\int_0^\infty \|(a_1,u_1,a_2,u_2)(t)\|_{W^{1,\infty}}\,dt<\infty.
\]
This integrable Lipschitz bound supplies the coefficient estimate needed in the high-order nonlinear energy inequality. Thus, the linear regularity-loss estimate and the nonlinear energy argument complement each other: the former provides the time-integrable lower-order norm, while the latter supplies the additional derivatives required by the high-frequency semigroup.

%
%
%
%
%
%

\subsubsection*{The charge variable and the asymmetric regularity hierarchy.}

A further difficulty comes from the interaction between the Poisson field and the two density equations. Propagating all unknowns at the same Sobolev level is not compatible with the derivative counting in the first continuity equation and the available control of the Poisson force. The appropriate variable is
\[
b=a_1-\frac1c a_2.
\]
Recall that
\[
Q=\rho_1-\rho_2=b-\mathcal R(a_2), \quad \mathcal R(a_2)=O(a_2^2).
\]
Thus $b$ is the linear part of the physical charge and retains the essential structure of $Q$. More importantly, it allows us to use the asymmetric regularity hierarchy
\[
b\in H^{N-1}, \quad (u_1,a_2,u_2)\in H^N.
\]

Although this hierarchy is asymmetric, it is adapted to the structure of the equations. At the top derivative level, the Poisson terms in the equations for $b,u_1$, and $u_2$ exhibit an exact cancellation. At the same time, the potentially derivative-losing nonlinear terms from the second fluid, in particular those involving
\[
a_2\operatorname{div}u_2 \quad\text{and}\quad a_2\nabla a_2,
\]
cancel after the corresponding differentiated equations are combined and integrated by parts. These cancellations make it possible to close the high-order energy estimate with one fewer derivative on the first density, without imposing additional regularity on $a_1$.

%
%
%
%
%
%

\subsubsection*{The undamped transverse mode and the irrotationality condition.}

The asymmetric dissipation also produces a structural obstruction in the transverse part of the first velocity. Since the Poisson force is a gradient, it acts only on the longitudinal component of $u_1$. The linearized equation consequently contains no mechanism that damps $\operatorname{curl}u_1$. This is why the assumption
\[
\operatorname{curl}u_{10}=0
\]
in Theorem \ref{thm-main} is structural rather than merely technical.

The vorticity equation shows that this condition is preserved by the nonlinear flow. Thus, we can reconstruct $u_1$ from its divergence through the Hodge decomposition and transfer the Green-function estimates for $q_1=\operatorname{div}u_1$ to the full velocity $u_1$. By contrast, the vorticity of $u_2$ is directly damped and decays exponentially. This distinction between the longitudinal and transverse dynamics is another consequence of the broken dissipative symmetry between the two carriers.

These ingredients form the global argument. The physical energy controls the physical charge and the directly damped component $u_2$, while the spectral analysis transfers dissipation to the longitudinal part of $u_1$. The regularity-loss Green-function estimates provide an integrable $W^{1,\infty}$ bound, and the $b$-based energy functional exploits the exact top-order cancellations. The resulting time-weighted bootstrap closes for sufficiently small initial data, and the continuation criterion yields the global classical solution stated in Theorem \ref{thm-main}.

%
%
%
%
%
%

We close the introduction by describing the organization of the paper. Section \ref{S2} collects the notation and preliminary estimates. Section \ref{S3} contains the reformulation, local theory, and the basic energy estimate. The spectral and Green-function analysis is carried out in Section \ref{S4}. Section \ref{S5} is devoted to the high-order energy method and the nonlinear decay estimates. The proof of Theorem \ref{thm-main} is completed in Section \ref{S6}. The detailed proof of the local well-posedness theorem is given in Appendix \ref{appendix-local}.

%
%
%
%
%
%

\section{Preliminaries}\label{S2}

%
%
%
%
%
%

\subsection{Notation}

Our notation follows that of \cite{HTWZ24}, with the frequency decomposition adapted to the low--middle--high splitting used below.

The spaces $L^p(\mathbb{R}^3)$ and $W^{k,p}(\mathbb{R}^3)$ denote the usual Lebesgue and Sobolev spaces on $\mathbb{R}^3$, with norms $\|\cdot\|_{L^p}$ and $\|\cdot\|_{W^{k,p}}$, respectively.  When $p=2$, we write $W^{k,2}(\mathbb{R}^3)=H^k(\mathbb{R}^3)$ and set
\[
\|u\|_{H^k(\mathbb{R}^3)}=\|u\|_{H^k},
\quad
\|u\|_{L^p(\mathbb{R}^3)}=\|u\|_{L^p}.
\]
We denote by $C$ a generic positive constant which may vary from line to line. The notation $f_1\lesssim f_2$ means that there exists a constant $C>0$ such that $f_1\leq C f_2$, while $f_1\sim f_2$ means that there exist positive constants $C_1,C_2$ such that
\[
f_1\leq C_1f_2,
\quad
f_2\leq C_2f_1.
\]
For an integer $k\geq0$, the symbol $\nabla^k$ denotes the collection of all
derivatives
\[
D^\ell
=\partial_{x_1}^{\ell_1}\partial_{x_2}^{\ell_2}\partial_{x_3}^{\ell_3},
\quad
|\ell|=\ell_1+\ell_2+\ell_3=k.
\]
For a function $f$, $\|f\|_X$ denotes its norm in $X$, and
$\|(f,g)\|_X:=\|f\|_X+\|g\|_X$.

The Fourier transform of $f$ is denoted by $\widehat f$ or
$\mathscr F[f]$ and is normalized by
\[
 \widehat f(\xi)
 =\mathscr F[f](\xi)
 =\int_{\mathbb R^3}f(x)e^{-ix\cdot\xi}\,dx,
 \qquad
 \mathscr F^{-1}[g](x)
 =\frac1{(2\pi)^3}\int_{\mathbb R^3}g(\xi)e^{ix\cdot\xi}\,d\xi.
\]
For $k\in\mathbb{R}$, let $\Lambda^k$ denote the Fourier multiplier
\[
\Lambda^k f =\mathscr F^{-1}\!\left(|\xi|^k\widehat f(\xi)\right).
\]
We write
$\Lambda=(-\Delta)^{1/2}$ and use the convention
\[
\|f\|_{\dot H^{-1}}=\|\Lambda^{-1}f\|_{L^2},
\quad
\|\nabla(-\Delta)^{-1}f\|_{L^2}=\|f\|_{\dot H^{-1}}.
\]
The operator $(-\Delta)^{-1}$ is understood as the Fourier multiplier $|\xi|^{-2}$.

Fix a smooth low--middle--high frequency decomposition. Choose $0<r_0<R_0$ so that $4r_0<R_0$, $R_0/2>1$, the low-frequency expansions of Lemma \ref{lem-low} are valid on $|\xi|\le 2r_0$, and the high-frequency expansions of Lemma \ref{lem-high} are valid on $|\xi|\ge R_0/2$.  This is possible after decreasing $r_0$ and increasing $R_0$ once and for all.

Take $\widehat\chi_\ell,\widehat\chi_h\in
C^\infty(\mathbb{R}^3)$, $0\leq\widehat\chi_\ell,\widehat\chi_h\leq1$, such that
\[
\widehat\chi_\ell(\xi)=1\quad (|\xi|\leq r_0),
\quad
\widehat\chi_\ell(\xi)=0\quad (|\xi|\geq2r_0),
\]
and
\[
\widehat\chi_h(\xi)=0\quad (|\xi|\leq R_0/2),
\quad
\widehat\chi_h(\xi)=1\quad (|\xi|\geq R_0).
\]
Set
\[
\widehat\chi_m=1-\widehat\chi_\ell-\widehat\chi_h.
\]
For any tempered distribution for which the following expressions are defined, set
\[ 
f^\ell=\mathscr F^{-1}\!\left(\widehat\chi_\ell\,\widehat f\right),
\quad
f^m=\mathscr F^{-1}\!\left(\widehat\chi_m\,\widehat f\right),
\quad
f^h=\mathscr F^{-1}\!\left(\widehat\chi_h\,\widehat f\right).
\] 
Thus
\[
f=f^\ell+f^m+f^h.
\]
For vector-valued functions, the decomposition is understood componentwise. This convention is compatible with the low-, middle-, and high-frequency regions used in the spectral analysis below.

%
%
%
%
%
%

\subsection{Auxiliary lemmas}

We recall some elementary inequalities used repeatedly below.

\begin{lem} \label{lem-prelim-commutator}
Let $m\geq1$ be an integer.  For the differential commutator
\[ 
[\nabla^m,f]g=\nabla^m(fg)-f\nabla^m g,
\] 
one has
\begin{equation}\label{prelim-commutator}
\big\|[\nabla^m,f]g\big\|_{L^p}
\lesssim
\|\nabla f\|_{L^{p_1}}\|\nabla^{m-1}g\|_{L^{p_2}}
+
\|\nabla^m f\|_{L^{p_3}}\|g\|_{L^{p_4}},
\end{equation}
where $p,p_2,p_3\in(1,\infty)$ and
\[ 
\frac{1}{p}
=\frac{1}{p_1}+\frac{1}{p_2}
=\frac{1}{p_3}+\frac{1}{p_4}.
\] 
Moreover, with $\Lambda=(-\Delta)^{1/2}$ and
\[
[\Lambda^m,f]g:=\Lambda^m(fg)-f\Lambda^m g,
\]
the following Kato--Ponce commutator estimate holds:
\[
\big\|[\Lambda^m,f]g\big\|_{L^2}
\lesssim
\|\nabla f\|_{L^\infty}\|\Lambda^{m-1}g\|_{L^2}
+
\|\Lambda^m f\|_{L^2}\|g\|_{L^\infty}.
\]
The estimates are understood componentwise for vector-valued functions.
\end{lem}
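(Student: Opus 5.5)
The proof splits into the differential commutator estimate and the Kato--Ponce estimate for $\Lambda^m$. For the first, expand with the Leibniz rule:
\[
[\nabla^m,f]g=\sum_{k=1}^{m}\binom{m}{k}\nabla^k f\,\nabla^{m-k}g,
\]
understood componentwise over multi-indices. The task is to bound each term $\nabla^k f\,\nabla^{m-k}g$ with $1\le k\le m$ by the right-hand side of \eqref{prelim-commutator}. The endpoint terms $k=1$ and $k=m$ follow directly from H\"older's inequality with exponents $(p_1,p_2)$ and $(p_3,p_4)$, respectively. For the intermediate terms $2\le k\le m-1$, write $\nabla^k f=\nabla^{k-1}(\nabla f)$ and view $F=\nabla f$ and $g$ as carrying $m-1$ derivatives in total, with $k-1$ derivatives on $F$ and $m-k$ on $g$.

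For these intermediate terms, use the Gagliardo--Nirenberg interpolation
\[
\|\nabla^{j}F\|_{L^{r_1}}\lesssim \|F\|_{L^{p_1}}^{1-\theta}\|\nabla^{m-1}F\|_{L^{p_3}}^{\theta}, \quad
\|\nabla^{m-1-j}g\|_{L^{r_2}}\lesssim \|g\|_{L^{p_4}}^{\theta}\|\nabla^{m-1}g\|_{L^{p_2}}^{1-\theta},
\]
with $j=k-1$, $\theta=j/(m-1)$, and
\[
\frac{1}{r_1}=\frac{1-\theta}{p_1}+\frac{\theta}{p_3},\qquad \frac{1}{r_2}=\frac{\theta}{p_4}+\frac{1-\theta}{p_2}.
\]
Then $1/r_1+1/r_2=1/p$, so H\"older's inequality followed by Young's inequality $A^{1-\theta}B^{\theta}\le A+B$ yields the two-term bound, once we note that $\|\nabla^{m-1}F\|_{L^{p_3}}=\|\nabla^m f\|_{L^{p_3}}$. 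The restriction $p_2,p_3\in(1,\infty)$ is what makes these interpolations legitimate at the top derivative level. When $p_1$ or $p_4$ equals $\infty$, the interpolation is the standard $L^\infty$-endpoint Gagliardo--Nirenberg inequality, which still holds for intermediate derivatives. This is the classical argument of Majda and of Ju.

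For the fractional Kato--Ponce estimate, the plan is to invoke the classical Kato--Ponce commutator theorem rather than reprove it. If a proof is wanted, use Bony's paraproduct decomposition
\[
fg=T_fg+T_gf+R(f,g).
\]
The term $[\Lambda^m,T_f]g$ is handled by the commutator estimate for dyadic blocks: the symbol of $\Lambda^m$ is smooth away from the origin and the block of $f$ has lower frequency than that of $g$, so a first-order Taylor expansion produces $\|\nabla f\|_{L^\infty}\|\Lambda^{m-1}g\|_{L^2}$. The remaining terms $\Lambda^m(T_gf+R(f,g))$, $-T_{\Lambda^m g}f$ and $-R(f,\Lambda^m g)$ are bounded by $\|\Lambda^m f\|_{L^2}\|g\|_{L^\infty}$. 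In $-T_{\Lambda^m g}f$ and $-R(f,\Lambda^m g)$ the derivatives falling on $g$ are transferred to $f$ by frequency comparability; in the remainder this is combined with $\nabla f$ to control the low-frequency part of $f$.

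The main obstacle is this last point: the remainder terms and the low-frequency part of $f$, since $\Lambda^m$ is nonlocal and $f$ itself is not assumed to lie in $L^2$. This is resolved by the standard Littlewood--Paley argument. Since the statement is well known and is used here only as a tool, the proof will simply cite Kato--Ponce and its later refinements for this part.
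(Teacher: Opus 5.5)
Your proposal is correct. It differs from the paper only in how much is actually proved: the paper gives no argument and simply cites Lemma A.3 of Wang for the differential commutator and Kato--Ponce for the $\Lambda^m$ estimate.

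For the first estimate you carry out the standard proof that lies behind that citation: the Leibniz expansion, H\"older for the terms $k=1$ and $k=m$, and, for $2\le k\le m-1$, Gagliardo--Nirenberg applied to $F=\nabla f$ and to $g$ with $\theta=(k-1)/(m-1)$, followed by $A^{1-\theta}B^{\theta}\le A+B$. Your exponent bookkeeping is right: $1/r_1+1/r_2=1/p$, and $r_1,r_2\in[1,\infty]$ because their reciprocals are convex combinations of reciprocals of admissible exponents. This gives a self-contained, elementary proof of the integer-order estimate at no extra cost.

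One side remark is inaccurate. The hypothesis $p_2,p_3\in(1,\infty)$ is not what legitimizes these interpolations. The exceptional cases of Gagliardo--Nirenberg concern $\theta=1$ or zero-order terms, whereas here $0<\theta<1$. This does not affect your argument.

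For the $\Lambda^m$ estimate you end up citing Kato--Ponce, exactly as the paper does. That is the right decision, because your paraproduct sketch understates the difficulty. Take the high--high term $R(f,\Lambda^m g)=\sum_{|j-k|\le1}\Delta_jf\,\Lambda^m\Delta_kg$, with $\Delta_j$ the Littlewood--Paley blocks. Each piece is supported in a ball rather than an annulus, and the regularity indices sum to $m+(-m)=0$. So ``frequency comparability'' only yields $\|g\|_{L^\infty}\sum_j2^{jm}\|\Delta_jf\|_{L^2}$, which is a $\dot B^m_{2,1}$ norm of $f$, not $\|\Lambda^mf\|_{L^2}$.

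Closing this term with only $\|g\|_{L^\infty}$ needs a genuinely bilinear ingredient, such as a square-function/Carleson-measure argument or the Coifman--Meyer multiplier theorem. That, rather than the low frequencies of $f$, is the real obstacle in the Kato--Ponce estimate.
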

\begin{proof}
The differential commutator estimate \eqref{prelim-commutator} is standard; see
\cite[Lemma A.3]{Wan12}.  The estimate for $[\Lambda^m,f]g$ is the
Kato--Ponce commutator inequality; see \cite{KP88}.
\end{proof}

We next establish the structural property of $u_1$ used in the spectral analysis.

\begin{lem}\label{lem-irrotational}
	Let $(\rho_1,u_1,\Phi)$ be a sufficiently smooth solution to
	\begin{equation}\label{eq:rho1-u1-system}
		\left\{ \begin{aligned}
			&\partial_t\rho_1+\operatorname{div}(\rho_1u_1)=0,\\
			&\partial_t(\rho_1u_1)
			+\operatorname{div}(\rho_1u_1\otimes u_1)
			=\rho_1\nabla\Phi.
		\end{aligned}\right.
	\end{equation}	
	on $[0,T]\times\mathbb{R}^3$. Assume that
	\begin{equation}
		\rho_1(t,x)>0,
		\quad
		(t,x)\in[0,T]\times\mathbb{R}^3.
		\label{eq:rho1-positive}
	\end{equation}
	If the initial velocity $u_{10}$ is irrotational, namely,
	\begin{align}
	\operatorname{curl}u_{10}=0,
		\label{eq:initial-irrotational}
	\end{align}
	then 
\[ 
		\operatorname{curl}u_1(t,x)=0,
		\quad
		(t,x)\in[0,T]\times\mathbb{R}^3.
\] 
\end{lem}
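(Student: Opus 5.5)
We first pass from the conservative form \eqref{eq:rho1-u1-system} to the non-conservative velocity equation. Expanding the momentum equation and subtracting $u_1$ times the continuity equation gives $\rho_1(\partial_t u_1 + u_1\cdot\nabla u_1) = \rho_1\nabla\Phi$. By the positivity assumption \eqref{eq:rho1-positive} we may divide by $\rho_1$, which yields
\[
\partial_t u_1 + (u_1\cdot\nabla)u_1 = \nabla\Phi .
\]
Next we rewrite the convection term with the identity $(u_1\cdot\nabla)u_1=\nabla\tfrac{|u_1|^2}{2}-u_1\times\omega$, where $\omega:=\operatorname{curl}u_1$. Taking the curl eliminates both gradient terms, $\nabla\Phi$ and $\nabla\frac{|u_1|^2}{2}$, and produces the vorticity equation
\[
\partial_t\omega + (u_1\cdot\nabla)\omega = (\omega\cdot\nabla)u_1 - (\operatorname{div}u_1)\,\omega .
\]
This equation is linear and homogeneous in $\omega$, with coefficients depending on $\nabla u_1$, and $\omega(0)=0$ by \eqref{eq:initial-irrotational}.

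The main step is a uniqueness argument showing that $\omega\equiv0$. We will use an $L^2$ energy estimate. Multiplying the vorticity equation by $\omega$ and integrating over $\mathbb R^3$, the transport term integrates by parts to $-\frac12\int\operatorname{div}u_1\,|\omega|^2$. This gives
\[
\frac{d}{dt}\|\omega\|_{L^2}^2\le C\|\nabla u_1(t)\|_{L^\infty}\|\omega\|_{L^2}^2 .
\]
Since the solution is sufficiently smooth on $[0,T]$ (for instance $u_1\in C([0,T];H^N)$ with $N\ge 10$, as in the functional setting of the paper), both $\|\nabla u_1\|_{L^\infty}$ and $\|\omega\|_{L^2}$ are bounded on $[0,T]$. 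The boundary terms in the integration by parts vanish because of the decay at infinity. Gronwall's inequality with $\omega(0)=0$ then gives $\omega\equiv0$ on $[0,T]$.

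The only delicate point is justifying these manipulations, namely the decay at infinity and the integrability of $\omega$. We will handle this by working in the Sobolev class provided by the local theory. Alternatively, we can argue along characteristics: let $X(t,x)$ solve $\dot X=u_1(t,X)$. Along each characteristic the vorticity equation becomes a linear ODE system $\frac{d}{dt}\omega(t,X)=A(t)\,\omega(t,X)$ with bounded matrix $A$ and zero initial value, so $\omega$ vanishes identically pointwise without any integrability assumption.
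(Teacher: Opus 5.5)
Your proposal is correct. The reduction to the vorticity equation is the same as in the paper: you go through $(u_1\cdot\nabla)u_1=\nabla\frac{|u_1|^2}{2}-u_1\times\omega$ and $\operatorname{div}\omega=0$, while the paper uses the identity for $\operatorname{curl}((u\cdot\nabla)u)$. Both give the identical equation.

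The real difference is in the uniqueness step. Your main route is an $L^2$ energy estimate followed by Gronwall. The paper's proof is exactly your ``alternative'': along the flow $X(t,x)$ of $u_1$, the quantity $\omega(t,X(t,x))$ solves a homogeneous linear ODE with matrix $\nabla u_1-(\operatorname{div}u_1)I$, so it vanishes. The paper then concludes because $X(t,\cdot)$ is a diffeomorphism.

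What each route needs and gives:
\begin{itemize}
\item The energy route needs global information: $\omega\in C([0,T];L^2)$ with enough decay to integrate by parts, and $\nabla u_1\in L^1(0,T;L^\infty)$. This is not literally in the lemma's hypothesis of a ``sufficiently smooth solution''. It is, however, available in the class $u_1\in C([0,T];H^N)\cap C^1([0,T];H^{N-1})$ of Theorem \ref{thm-local}, which is where the lemma is actually used. In exchange, it avoids building and inverting the flow map and works at lower regularity. It is also the same mechanism the paper later uses for $\operatorname{curl}u_2$, where the damping term upgrades it to exponential decay.
\item The characteristic argument is pointwise and needs no integrability. It does need every point $(t,y)$ to lie on a characteristic starting at $t=0$, i.e.\ $X(t,\cdot)$ must be onto $\mathbb R^3$. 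Your sketch says $\omega$ ``vanishes identically pointwise'' without this step; vanishing along trajectories only gives $\omega(t,\cdot)=0$ on the image of $X(t,\cdot)$. The paper supplies the step by invoking the diffeomorphism property. It holds, for example, when $u_1$ is bounded and Lipschitz in $x$, since then the characteristic ODE can be solved backward from any point.
\end{itemize}
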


\begin{proof}
	We first rewrite the momentum equation in its non-conservative
	form. A direct calculation gives
\[ 
		\partial_t(\rho_1u_1)
		+\operatorname{div}(\rho_1u_1\otimes u_1)
		=\rho_1\bigl(\partial_tu_1+(u_1\cdot\nabla)u_1\bigr)
		+u_1\bigl(\partial_t\rho_1
		+\operatorname{div}(\rho_1u_1)\bigr).
\]
	Using the continuity equation in
	\eqref{eq:rho1-u1-system}, we deduce that
\[ 
		\rho_1\bigl(\partial_tu_1+(u_1\cdot\nabla)u_1\bigr)
		=\rho_1\nabla\Phi.
\] 
	In view of the positivity assumption \eqref{eq:rho1-positive},
	it follows that
	\begin{equation}
		\partial_tu_1+(u_1\cdot\nabla)u_1=\nabla\Phi.
		\label{eq:velocity-equation}
	\end{equation}
	
	Set  $\omega_1=\operatorname{curl}u_1$.
	Taking the curl of \eqref{eq:velocity-equation}, we obtain
\[ 
		\partial_t\omega_1
		+\operatorname{curl}\bigl((u_1\cdot\nabla)u_1\bigr)=0.
\] 
	Recall the vector identity
\[ 
		\operatorname{curl}\bigl((u\cdot\nabla)u\bigr)
		=(u\cdot\nabla)\omega
		+(\operatorname{div}u)\omega
		-(\omega\cdot\nabla)u,
		\quad
		\omega=\operatorname{curl}u.
\] 
	Thus, $\omega_1$ satisfies
	\begin{equation}
		\partial_t\omega_1
		+(u_1\cdot\nabla)\omega_1
		+(\operatorname{div}u_1)\omega_1
		-(\omega_1\cdot\nabla)u_1=0.
		\label{eq:vorticity-equation}
	\end{equation}

	Let $X(t,x)$ be the flow map generated by $u_1$, namely,
\[
		\frac{d}{dt}X(t,x)=u_1\bigl(t,X(t,x)\bigr),
		\quad X(0,x)=x.
\]
	Evaluating \eqref{eq:vorticity-equation} along the trajectory gives
	\begin{align}\label{eq:vorticity-characteristic-1}
		\frac{d}{dt}\omega_1\bigl(t,X(t,x)\bigr)
		&=\bigl((\omega_1\cdot\nabla)u_1\bigr)\bigl(t,X(t,x)\bigr)
		-\bigl(\operatorname{div}u_1\bigr)\bigl(t,X(t,x)\bigr)
		\omega_1\bigl(t,X(t,x)\bigr)\notag\\
		&=\Bigl[\nabla u_1-(\operatorname{div}u_1)I\Bigr]
		\bigl(t,X(t,x)\bigr)\,
		\omega_1\bigl(t,X(t,x)\bigr).
	\end{align}
	For each fixed $x$, this is a homogeneous linear ordinary differential equation for the vector
	$\omega_1(t,X(t,x))$.

	By the initial assumption \eqref{eq:initial-irrotational},
\[
		\omega_1(0,x)
		=\operatorname{curl}u_{10}(x)
		=0.
\]
	The uniqueness of solutions to the linear ordinary differential equation \eqref{eq:vorticity-characteristic-1} yields
\[
		\omega_1\bigl(t,X(t,x)\bigr)=0,
		\quad
		t\in[0,T].
\]
	Since $X(t,\cdot)$ is a diffeomorphism as long as the solution
	remains sufficiently smooth, we conclude that
	$$
	\omega_1(t,x)=0,
	\quad
	(t,x)\in [0,T]\times\mathbb{R}^3.
	$$
	Hence
	$$
	\operatorname{curl} u_1(t,x)=0,
	\quad
	(t,x)\in[0,T]\times\mathbb{R}^3.
	$$
\end{proof}

%
%
%
%
%
%

\section{Reformulation and basic energy estimate}\label{S3}
We reformulate the system in the perturbation variables introduced in \eqref{newvariable}, state the local well-posedness result, and derive the basic physical energy estimate.

Recall that $c=\sqrt{\gamma}$. Solving the definition of $a_2$ in
\eqref{newvariable} for $\rho_2$ gives
\begin{align}\label{rho2-a2}
\rho_2
=\left(1+\frac{\gamma-1}{2c}a_2\right)^{\frac{2}{\gamma-1}}.
\end{align}
Set
\[
\mathcal R(a_2)
:=\rho_2-1-\frac1c a_2.
\]
Then $\mathcal R(0)=\mathcal R'(0)=0$, and
\begin{align}\label{physical-charge}
\rho_1-\rho_2
=\left(a_1-\frac1c a_2\right)-\mathcal R(a_2).
\end{align}
In terms of $(a_1,u_1,a_2,u_2)$, system \eqref{main1} is equivalent, as long as $1+a_1 > 0$ and 
$1+\frac{\gamma-1}{2c}a_2>0$, to

\begin{equation}\label{main2}
\left\{
\begin{aligned}
&\partial_t a_1+\operatorname{div}u_1=f_1,\\
&\partial_tu_1+\nabla(-\Delta)^{-1}\left(a_1-\frac1c a_2\right)=f_2,\\
&\partial_t a_2+c\operatorname{div}u_2=f_3,\\
&\partial_t u_2+c\nabla a_2-\nabla(-\Delta)^{-1}\left(a_1-\frac1c a_2\right)+u_2=f_4,
\end{aligned}
\right.
\end{equation}
where the nonlinear terms $f_i(i=1,2,3,4)$ are given by
\[ 
\left\{
\begin{aligned}
f_1&=-\operatorname{div}(a_1u_1),\\
f_2&=\nabla(-\Delta)^{-1}\mathcal R(a_2)-u_1\cdot\nabla u_1,\\
f_3&=-u_2\cdot\nabla a_2-\frac{\gamma-1}{2}a_2\operatorname{div}u_2,\\
f_4&=-\nabla(-\Delta)^{-1}\mathcal R(a_2)-u_2\cdot\nabla u_2
-\frac{\gamma-1}{2}a_2\nabla a_2.
\end{aligned}
\right.
\] 
We consider system \eqref{main2} with the initial data
\[ 
	(a_1,u_1,a_2,u_2)|_{t=0}=(a_{10},u_{10},a_{20},u_{20})(x),
\] 
and the far-field condition
\begin{align}\label{fd}
	\lim_{|x|\rightarrow +\infty}(a_1,u_1,a_2,u_2)(x,t)=(0,0,0,0).
\end{align}

%
%
%
%
%
%

\subsection{Local well-posedness} 
The following theorem provides the local solution and continuation criterion used in the a priori analysis. Its proof is postponed to Appendix \ref{appendix-local}.

\begin{thm} \label{thm-local}
Let {$N\geq 10$}, and set
$b_0=a_{10}-c^{-1}a_{20}$.  Assume
\[
a_{10}\in H^{N-1},\quad
(u_{10},a_{20},u_{20})\in H^N,\quad
b_0\in\dot H^{-1},
\]
and suppose that, for some $\kappa_0>0$,
\begin{align}\label{initial-positive}
1+a_{10}(x)\ge\kappa_0,
\quad
1+\frac{\gamma-1}{2c}a_{20}(x)\ge\kappa_0
\quad (x\in\mathbb R^3).
\end{align}
Then there exists $T_0>0$ and a unique solution of \eqref{main2}--\eqref{fd} on
$[0,T_0]$ satisfying
\begin{align}\label{local-class}
\begin{aligned}
a_1&\in C([0,T_0];H^{N-1})\cap C^1([0,T_0];H^{N-2}),\\
(u_1,a_2,u_2)&\in C([0,T_0];H^N)\cap C^1([0,T_0];H^{N-1}),\\
b&\in C([0,T_0];\dot H^{-1}),\quad \nabla\Phi \in C([0,T_0];H^N).
\end{aligned}
\end{align}

Moreover, the two quantities in \eqref{initial-positive} remain strictly positive on
$[0,T_0]$.
\end{thm}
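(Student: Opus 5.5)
The plan is to prove Theorem \ref{thm-local} by a classical iteration (Kato-type) scheme for the symmetrizable hyperbolic part, with the Poisson force treated as a lower-order, nonlocal but bounded term. The regularity count must respect the hierarchy $a_1\in H^{N-1}$, $(u_1,a_2,u_2)\in H^N$.

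The first step is to observe that the system splits into two blocks. The second fluid $(a_2,u_2)$ is a symmetric hyperbolic system thanks to the choice of $a_2$ in \eqref{newvariable}: the principal part is $\partial_t+u_2\cdot\nabla$ plus $(c+\frac{\gamma-1}{2}a_2)$ times the standard symmetric acoustic operator. The first fluid is handled in its velocity form. The equation for $u_1$ is the transport equation $\partial_t u_1+u_1\cdot\nabla u_1=\nabla\Phi$, derived in Lemma \ref{lem-irrotational}. Then $a_1$ solves the linear transport equation $\partial_t a_1+u_1\cdot\nabla a_1=-(1+a_1)\operatorname{div}u_1$.

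Given iterates $(a_1^k,u_1^k,a_2^k,u_2^k)$, I would solve the linear problem with coefficients frozen at step $k$. The force $\nabla\Phi^k=-\nabla(-\Delta)^{-1}(a_1^k-\rho_2(a_2^k)+1)$ is taken as a source. Its regularity is the crucial point. Write $Q^k=b^k-\mathcal R(a_2^k)$. Then $\|\nabla\Phi^k\|_{L^2}\le\|b^k\|_{\dot H^{-1}}+\|\mathcal R(a_2^k)\|_{\dot H^{-1}}$. The second term is bounded by $\|\mathcal R(a_2^k)\|_{L^{6/5}}\lesssim\|a_2^k\|_{L^2}\|a_2^k\|_{L^3}$ via Hardy--Littlewood--Sobolev, since $\mathcal R=O(a_2^2)$. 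For $j\ge1$ we have $\|\nabla^{j}\nabla\Phi^k\|_{L^2}\lesssim\|\nabla^{j-1}Q^k\|_{L^2}$, so $\nabla\Phi^k\in H^N$ whenever $a_1^k\in H^{N-1}$ and $a_2^k\in H^N$. This gain of one derivative is exactly what permits $u_1\in H^N$ with $a_1$ only in $H^{N-1}$.

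The linear transport estimate for $u_1$ in $H^N$ needs $\nabla u_1^k\in L^\infty$ and $\nabla\Phi^k\in H^N$, and the commutator Lemma \ref{lem-prelim-commutator} handles the rest. The estimate for $a_1$ in $H^{N-1}$ needs $\operatorname{div}u_1^k\in H^{N-1}$, which is fine. The second fluid is estimated by standard symmetric hyperbolic energy estimates with the damping term and the source $-\nabla\Phi^k\in H^N$. Uniform bounds on a short time $T_0$ depending on the data norm and $\kappa_0$ follow in the usual way. Positivity of $1+a_1$ and $1+\frac{\gamma-1}{2c}a_2$ is propagated for short time, either along characteristics or through the $L^\infty$ bound $\|a_i(t)-a_{i0}\|_{L^\infty}\lesssim t$ and Sobolev embedding.

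The $\dot H^{-1}$ control of $b$ must be propagated separately. From \eqref{main2}, $\partial_t b=-\operatorname{div}(u_1-u_2)+f_1-\frac1c f_3$. Here $\operatorname{div}(a_1u_1)$ lies in $\dot H^{-1}$ directly. The term $u_2\cdot\nabla a_2$ equals $\operatorname{div}(u_2a_2)-a_2\operatorname{div}u_2$, and the product part is in $L^{6/5}\subset\dot H^{-1}$. This gives $b\in C([0,T_0];\dot H^{-1})$, and consequently $\nabla\Phi\in C([0,T_0];H^N)$.

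Convergence is obtained by contraction in a lower norm, for instance $a_1\in L^2$, $(u_1,a_2,u_2)\in H^1$, $b\in\dot H^{-1}$. The difference of the Poisson forces is controlled by the difference of $b$ in $\dot H^{-1}$ plus the difference of $\mathcal R$ terms. Interpolation with the uniform high-norm bounds and weak compactness then give a limit in $L^\infty_tH^N$. Time continuity in the top norm is recovered by the standard Bona--Smith or Kato argument. Uniqueness follows from the same difference estimate. Finally, equivalence of \eqref{main2} with the conservative system needs positivity of $\rho_1$.

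\textbf{Main obstacle.} I expect the hardest step to be the top-order estimate for $a_1$ together with continuity in time. Since $a_1\in H^{N-1}$ is one derivative below $u_1$, the term $(1+a_1)\operatorname{div}u_1$ is exactly at the borderline. The quantity $\nabla^{N-1}\operatorname{div}u_1$ is only in $L^2$, and no derivative is lost only because $u_1$ is one level higher. I would first check this derivative count carefully. Then I would obtain strong continuity by regularizing the initial data and using the uniform bounds.
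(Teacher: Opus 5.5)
Your proposal is correct, but it is organized differently from the paper's proof in Appendix \ref{appendix-local}.

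\emph{The paper's route.} The paper applies a Friedrichs cutoff $J_m$ to the whole system and solves the truncated problem by Picard's theorem. It then derives $m$-independent bounds for a single functional built on the charge variable, $\|b\|_{\dot H^{-1}}^2+\|b\|_{H^{N-1}}^2+\|(u_1,a_2,u_2)\|_{H^N}^2$. Pairing $\Lambda^{s-1}b$ with $\Lambda^{s}(u_1,a_2,u_2)$ makes the Poisson terms cancel exactly, by skew-adjointness of the Riesz transforms. The top-order parts of $a_2\operatorname{div}u_2$ and $a_2\nabla a_2$ also cancel. The negative level is treated by pairing the $\Lambda^{-1}$-equation for $b$ with $\Lambda^{-1}b$. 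Existence then comes from compactness, continuity from Bona--Smith, and uniqueness from the same estimate in $\mathscr H_{N-1}$.

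\emph{Your route.} You decouple the system instead:
\begin{itemize}
\item $u_1$ solves a forced Burgers equation;
\item $a_1$ solves a linear transport equation forced by $(1+a_1)\operatorname{div}u_1\in H^{N-1}$;
\item $(a_2,u_2)$ is symmetric hyperbolic with coefficient $c+\frac{\gamma-1}{2}a_2$, and its symmetrizer computation is precisely the paper's $a_2$-cancellation;
\item $\nabla\Phi$ is a source bounded in $H^N$ by $\|Q\|_{\dot H^{-1}}+\|Q\|_{H^{N-1}}$, as in \eqref{local-poisson-est}.
\end{itemize}
The bound $b\in\dot H^{-1}$ propagates because every term of $\partial_t b$ is either the divergence of an $L^2$ field or lies in $L^{6/5}\subset\dot H^{-1}$. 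For differences this holds after writing $u\cdot\nabla\delta a=\operatorname{div}(u\,\delta a)-\delta a\operatorname{div}u$.

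\emph{What each buys.} Your approach shows that the Riesz-transform cancellation is unnecessary for the local theory. With Gr\"onwall constants $C_{K,\kappa}$, the Poisson coupling is merely a bounded perturbation of order $-1$. Your approach also gives existence, uniqueness, and low-norm stability from a single contraction that respects the one-derivative offset between $a_1$ and $u_1$. The paper's approach buys uniformity of method. Its local a priori estimate uses the same $b$-based functional and the same cancellations as the global estimate in Lemma \ref{lem-high-order-energy}. As a result, the continuation norm $\mathfrak X_N$ is equivalent to the energy $\mathcal E_N$ controlled in Section \ref{S5}.

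Finally, the step you flag as the main obstacle is harmless. Since $a_1$ is only transported and forced by $\operatorname{div}u_1\in H^{N-1}$, its $H^{N-1}$ estimate loses no derivative.
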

\begin{rem} 
	The local solution extends globally once the continuation norm below remains bounded and the density stays positive.  Let $T^*>0$ be the maximal existence time and define
	\begin{align}\label{continuation-norm}
		\mathfrak X_N(t):=
		\|a_1(t)\|_{H^{N-1}}
		+\|(u_1,a_2,u_2)(t)\|_{H^N}
		+\left\|a_1(t)-\frac1c a_2(t)\right\|_{\dot H^{-1}}.
	\end{align}
	If $T^*<\infty$ and
	\begin{equation}\label{continuation-criterion}
		 \sup_{0\le t<T^*}\mathfrak X_N(t)<\infty,\quad \inf_{0\le t<T^*,\,x\in\mathbb R^3}(1+a_1(t,x))>0,\quad \inf_{0\le t<T^*,\,x\in\mathbb R^3}
		\left(1+\frac{\gamma-1}{2c}a_2(t,x)\right)>0,
	\end{equation}
	then the solution extends beyond $T^*$.
\end{rem}

\begin{rem} 
The Sobolev regularity in Theorem \ref{thm-local} is exactly the regularity asserted for the global solution in Theorem \ref{thm-main}.  The additional assumption $a_{20}\in\dot H^{-1}$ appearing in \eqref{main-smallness} is deliberately not part of the local well-posedness hypothesis: it is a low-frequency assumption used only in the Green-function decay estimates, in particular in Proposition \ref{prop-linear-decay}. The negative norm needed for local solvability of the Poisson field is instead the charge condition $b_0\in\dot H^{-1}$ (equivalently, $Q_0\in\dot H^{-1}$ on bounded positive Sobolev sets).
\end{rem}

Under the assumptions of Theorem \ref{thm-main}, let $T^*>0$ be the maximal existence time of the local solution. All estimates below are derived on an arbitrary interval
\[
[0,T]\subset[0,T^*)
\]
with constants independent of $T$.

Fix $\delta>0$ sufficiently small and assume on $[0,T]$ that
\begin{align}\label{small-neighborhood}
\sup_{0\le t\le T}\Big(
\|a_1(t)\|_{H^2}+\|a_2(t)\|_{H^3}
+\|u_1(t)\|_{H^3}+\|u_2(t)\|_{H^3}
+\|b(t)\|_{\dot H^{-1}}\Big)\le\delta.
\end{align}
Sobolev embedding and \eqref{small-neighborhood} then give
\[
1+a_1(t,x)\ge\frac12,
\quad
1+\frac{\gamma-1}{2c}a_2(t,x)\ge\frac12.
\]

%
%
%
%
%
%

\subsection{Basic $L^2$ energy estimate}
The following estimate controls the physical charge and identifies the direct dissipation supplied by the second velocity.
 \begin{prop}\label{p1}
Assume that the hypotheses of Theorem \ref{thm-main} hold. Let $(a_1,u_1,a_2,u_2)$ be a classical solution of \eqref{main2}--\eqref{fd} on $[0,T]$ satisfying  \eqref{small-neighborhood}. Then
 \begin{align*}
 	&\|(u_1,a_2,u_2)(t)\|_{L^2}^2+\|\nabla(-\Delta)^{-1}(a_1-\frac{1}{c}a_2)(t)\|_{L^2}^2
 	+\int_0^t\|u_2(\tau)\|_{L^2}^2d\tau\\
 	&\quad \leq C(\|(a_{20},u_{10},u_{20})\|_{L^2}^2+\|a_{10}-\frac{1}{c}a_{20}\|_{\dot{H}^{-1}}^2),
 \end{align*}	
 where $C$ is a positive constant independent of time.
\end{prop}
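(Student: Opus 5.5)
The plan is to derive a physical (relative) energy identity for the original system \eqref{main1}, rather than working with the linearized form \eqref{main2} term by term. The linear energy alone would leave cubic remainders involving $a_1$ that are not controlled in $L^2$ here: $a_1$ does not appear on the left-hand side. We take the functional
\[
\mathcal E(t)=\int_{\mathbb R^3}\Big(\tfrac12\rho_1|u_1|^2+\tfrac12\rho_2|u_2|^2+\Pi(\rho_2)+\tfrac12|\nabla\Phi|^2\Big)\,dx,
\]
where $\Pi(\rho_2)=\rho_2\int_1^{\rho_2}\frac{P(s)-P(1)}{s^2}\,ds$ is the relative pressure potential. Note that no potential term in $\rho_1$ appears, since the first fluid is pressureless.

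First we use the momentum equations in nonconservative form, as in the proof of Lemma \ref{lem-irrotational}, to get
\[
\tfrac{d}{dt}\int\tfrac12\rho_i|u_i|^2 = \int \pm\rho_iu_i\cdot\nabla\Phi - \delta_{i2}\int\rho_2|u_2|^2 - \delta_{i2}\int u_2\cdot\nabla P(\rho_2).
\]
The pressure term equals $\frac{d}{dt}\int\Pi(\rho_2)$ by the continuity equation. For the field part, we write $\int(\rho_1u_1-\rho_2u_2)\cdot\nabla\Phi=-\int\Phi\,\partial_t(\rho_1-\rho_2)\cdot(-1)$. Using $\Delta\Phi=\rho_1-\rho_2$, this combines to $-\frac{d}{dt}\frac12\int|\nabla\Phi|^2$, with sign conventions checked so that the sum gives exactly
\[
\frac{d}{dt}\mathcal E(t)+\int\rho_2|u_2|^2\,dx=0.
\]
The integrations by parts are justified by the regularity class in Theorem \ref{thm-local}, namely $\nabla\Phi\in H^N$ and $Q\in\dot H^{-1}$. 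Integrating in time gives $\mathcal E(t)+\int_0^t\int\rho_2|u_2|^2\le\mathcal E(0)$.

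Next we compare $\mathcal E$ with the quantities in the statement. Under \eqref{small-neighborhood}, both $\rho_1$ and $\rho_2$ lie in $[1/2,3/2]$, so the kinetic terms are equivalent to $\|(u_1,u_2)\|_{L^2}^2$ and the dissipation dominates $\frac12\|u_2\|_{L^2}^2$. Since $\Pi(1)=\Pi'(1)=0$ and $\Pi''(1)=\gamma$, we get $\Pi(\rho_2)\sim|\rho_2-1|^2\sim|a_2|^2$ pointwise. The main obstacle is the field term. We have $\|\nabla\Phi\|_{L^2}=\|Q\|_{\dot H^{-1}}$, whereas the statement uses $\|b\|_{\dot H^{-1}}$, with $Q=b-\mathcal R(a_2)$. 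We therefore need
\[
\|\mathcal R(a_2)\|_{\dot H^{-1}}\lesssim\|\mathcal R(a_2)\|_{L^{6/5}}\lesssim\|a_2\|_{L^2}\|a_2\|_{L^3}\lesssim\delta\|a_2\|_{L^2},
\]
which follows from the Hardy--Littlewood--Sobolev (dual Sobolev) embedding $L^{6/5}\subset\dot H^{-1}$ and $|\mathcal R(a_2)|\lesssim|a_2|^2$.

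This gives $\|b\|_{\dot H^{-1}}^2\le2\|Q\|_{\dot H^{-1}}^2+C\delta^2\|a_2\|_{L^2}^2$. The same bound holds in reverse at $t=0$, using $\|a_{20}\|_{L^3}$ small, which follows from $\varepsilon_N$ small. Combining, the left-hand side of the statement is bounded by $C\mathcal E(t)+C\delta^2\|a_2\|_{L^2}^2$. Since $\|a_2\|_{L^2}^2\lesssim\mathcal E(t)$, the extra term is absorbed. Finally, $\mathcal E(0)\lesssim\|(a_{20},u_{10},u_{20})\|_{L^2}^2+\|b_0\|_{\dot H^{-1}}^2$, and the estimate follows with $C$ independent of $t$.
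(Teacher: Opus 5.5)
Your proposal is correct and follows essentially the same argument as the paper. You use the same physical energy identity, with the relative pressure potential $H(\rho_2)$ and the field energy $\frac12\|\nabla(-\Delta)^{-1}(\rho_1-\rho_2)\|_{L^2}^2$, and then compare $Q$ with $b$ via $L^{6/5}\hookrightarrow\dot H^{-1}$ and $|\mathcal R(a_2)|\lesssim|a_2|^2$. The only difference is cosmetic: you bound $\|a_2\|_{L^{12/5}}^2$ by H\"older as $\|a_2\|_{L^2}\|a_2\|_{L^3}$, while the paper interpolates between $L^2$ and $L^6$; both give $C\delta\|a_2\|_{L^2}$.
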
	

\begin{proof}
 Define the internal energy
\[
H(\rho_2):=\rho_2\int_1^{\rho_2}\frac{P(y)-P(1)}{y^2}\,dy.
\]
Since $P'(1)=\gamma>0$, one has $H(1)=H'(1)=0$ and $H''(1)>0$.  Hence, as long as $\rho_2$ stays in a sufficiently small neighborhood of $1$,
\begin{align}\label{H-equivalence}
C^{-1}(\rho_2-1)^2\le H(\rho_2)\le C(\rho_2-1)^2.
\end{align}
Multiplying the two momentum equations in \eqref{main1} by $u_1$ and $u_2$, respectively, using the two continuity equations, and integrating over $\mathbb R^3$, we obtain
\[
 \frac{d}{dt}\int_{\mathbb R^3}
\left(\frac12\rho_1|u_1|^2+\frac12\rho_2|u_2|^2+H(\rho_2)\right)dx -\int_{\mathbb R^3}(\rho_1u_1-\rho_2u_2)\cdot\nabla\Phi\,dx
+\int_{\mathbb R^3}\rho_2|u_2|^2\,dx=0.
\]
 Indeed, by the continuity equations and
$\Delta\Phi=\rho_1-\rho_2$,
\begin{align*}
-\int_{\mathbb R^3}(\rho_1u_1-\rho_2u_2)\cdot\nabla\Phi\,dx
&=\int_{\mathbb R^3}\big[\operatorname{div}(\rho_1u_1)-\operatorname{div}(\rho_2u_2)\big]\Phi\,dx\\
&=-\int_{\mathbb R^3}\partial_t(\rho_1-\rho_2)\Phi\,dx\\
&=\frac12\frac{d}{dt}
\|\nabla(-\Delta)^{-1}(\rho_1-\rho_2)\|_{L^2}^2.
\end{align*}
Hence
\begin{equation}\label{physical-basic-energy}
 \frac{d}{dt}\int_{\mathbb R^3}\left(
\frac12\rho_1|u_1|^2+\frac12\rho_2|u_2|^2+H(\rho_2)
+\frac12|\nabla(-\Delta)^{-1}(\rho_1-\rho_2)|^2\right)dx +\int_{\mathbb R^3}\rho_2|u_2|^2\,dx=0.
\end{equation}

In the perturbation variables, \eqref{physical-charge} gives
\[
Q=\rho_1-\rho_2=b-\mathcal R(a_2).
\]
Thus the physical charge $Q$ is not exactly $b$.  Since $\mathcal R(a_2)=O(a_2^2)$, the difference is quadratic.  Using the Sobolev inequality
$L^{6/5}(\mathbb R^3)\hookrightarrow\dot H^{-1}(\mathbb R^3)$ and interpolation inequality between $L^2$ and $L^6$, and the Sobolev
inequality, we obtain
\[ 
\|\mathcal R(a_2)\|_{\dot H^{-1}}
\le C\|\mathcal R(a_2)\|_{L^{\frac{6}{5}}} \le C\|a_2\|_{L^{\frac{12}{5}}}^2 \le C\|a_2\|_{L^2}^{\frac{3}{2}}\|a_2\|_{L^6}^{\frac{1}{2}}
\le C\|a_2\|_{L^2}^{\frac{3}{2}}\|\nabla a_2\|_{L^2}^{\frac{1}{2}}
\le C\delta\|a_2\|_{L^2},
\] 
where the last inequality follows from \eqref{small-neighborhood}.  Therefore
\begin{align}\label{charge-b-equivalence}
\|b\|_{\dot H^{-1}}
 \le \|\rho_1-\rho_2\|_{\dot H^{-1}}+C\delta\|a_2\|_{L^2},\quad \|\rho_1-\rho_2\|_{\dot H^{-1}} \le \|b\|_{\dot H^{-1}}+C\delta\|a_2\|_{L^2}.
\end{align}
The same bound holds at $t=0$.  From \eqref{rho2-a2} and the smallness assumption,
\[
\|\rho_2-1\|_{L^2}\sim\|a_2\|_{L^2},
\]
and the positivity bounds imply that the weighted kinetic and damping terms in
\eqref{physical-basic-energy} are equivalent to the corresponding unweighted $L^2$ norms.

Integrating \eqref{physical-basic-energy} in time, using \eqref{H-equivalence}--\eqref{charge-b-equivalence}, and taking $\delta$ sufficiently small, we arrive at
\begin{align*}
 \|(u_1,a_2,u_2)(t)\|_{L^2}^2 +\|b(t)\|_{\dot H^{-1}}^2 +\int_0^t\|u_2(\tau)\|_{L^2}^2\,d\tau \le C\left( \|(u_{10},a_{20}, u_{20})\|_{L^2}^2 +\|b_0\|_{\dot H^{-1}}^2\right),
\end{align*}
and $\|b\|_{\dot H^{-1}}=\|\nabla(-\Delta)^{-1}b\|_{L^2}$. This completes the proof.
\end{proof}

%
%
%
%
%
%

\section{Linear spectral analysis}\label{S4}
Proposition \ref{p1} supplies direct dissipation only for $u_2$. To recover decay for the first velocity, we analyze the longitudinal part of the linearized system. The linearization of \eqref{main2} is
\begin{equation}\label{lin-system}
\left\{
\begin{aligned}
&\partial_t a_1+\operatorname{div}u_1=0,\\
&\partial_t u_1+\nabla(-\Delta)^{-1}\left(a_1-\frac{1}{c}a_2\right)=0,\\
&\partial_t a_2+c\operatorname{div}u_2=0,\\
&\partial_t u_2+c\nabla a_2-\nabla(-\Delta)^{-1}\left(a_1-\frac{1}{c}a_2\right)+u_2=0.
\end{aligned}
\right.
\end{equation}
By Lemma \ref{lem-irrotational}, the first velocity remains
irrotational. Thus it is enough to introduce the longitudinal
variables
\begin{align*}
q_1=\operatorname{div}u_1,\quad q_2=\operatorname{div}u_2.
\end{align*}
Then the linearized system \eqref{lin-system} becomes
\begin{equation}\label{lin-aq}
\left\{
\begin{aligned}
&\partial_t a_1+q_1=0,\\
&\partial_t q_1-a_1+\frac{1}{c}a_2=0,\\
&\partial_t a_2+cq_2=0,\\
&\partial_t q_2+c\Delta a_2+a_1-\frac{1}{c}a_2+q_2=0.
\end{aligned}
\right.
\end{equation}
Set
\begin{align*}
U(x,t)=(a_1,q_1,a_2,q_2)^T(x,t),\quad
U(0,x)=U_0(x)=(a_{10},q_{10},a_{20},q_{20})^T(x).
\end{align*}
Then
\begin{equation}\label{lin-matrix}
\partial_t U+\mathcal{L}U=0,
\end{equation}
where
\[
\mathcal{L}=\begin{pmatrix}
0&1&0&0\\
-1&0&\frac{1}{c}&0\\
0&0&0&c\\
1&0&c\Delta-\frac{1}{c}&1
\end{pmatrix}.
\]
Taking the Fourier transform gives
\[
\widehat{\mathcal{L}}(\xi)=\begin{pmatrix}
0&1&0&0\\
-1&0&\frac{1}{c}&0\\
0&0&0&c\\
1&0&-c|\xi|^2-\frac{1}{c}&1
\end{pmatrix}.
\]
Hence the characteristic polynomial of $-\widehat{\mathcal{L}}(\xi)$ is
\begin{equation}\label{char-eq}
p(\lambda,\xi) =\det\left(\lambda I+\widehat{\mathcal{L}}(\xi)\right) =\lambda^4+\lambda^3+\left(2+c^2|\xi|^2\right)\lambda^2+\lambda+c^2|\xi|^2.
\end{equation}

We analyze the roots of \eqref{char-eq} in the low-, middle-, and high-frequency regions.



\subsection{Low-frequency analysis}
We begin with the spectrum near $\xi=0$, where one eigenvalue generates the diffusive mode.

\begin{lem}\label{lem-low}
There exists $r_0>0$ sufficiently small such that, for $|\xi|\le 2r_0$, the four roots of \eqref{char-eq} satisfy
\begin{align*}
\lambda_1&=e_0+e_2|\xi|^2+O(|\xi|^4),\quad \lambda_2=j_0+j_2|\xi|^2+O(|\xi|^4),\\
\lambda_3&=z_0+z_2|\xi|^2+O(|\xi|^4),\quad \lambda_4=-c^2|\xi|^2+O(|\xi|^4),
\end{align*}
where $e_0,j_0,z_0$ are the three roots of
\[
\lambda^3+\lambda^2+2\lambda+1=0
\]
and
\begin{align*}
e_2=\frac{-(e_0^2+1)c^2}{4e_0^3+3e_0^2+4e_0+1},\quad
j_2=\frac{-(j_0^2+1)c^2}{4j_0^3+3j_0^2+4j_0+1},\quad
z_2=\frac{-(z_0^2+1)c^2}{4z_0^3+3z_0^2+4z_0+1}.
\end{align*}
Moreover, there exists $\eta_1>0$ such that
\[
\operatorname{Re}\lambda_j(\xi)\le-\eta_1\quad (j=1,2,3),
\qquad
\operatorname{Re}\lambda_4(\xi)\le-\eta_1|\xi|^2,
\quad |\xi|\le2r_0.
\]
\end{lem}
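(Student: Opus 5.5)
The plan is to treat \eqref{char-eq} as a perturbation in the small parameter $s:=|\xi|^2$. Write
\[
p(\lambda,\xi)=F(\lambda,s):=\lambda^4+\lambda^3+(2+c^2s)\lambda^2+\lambda+c^2s,
\]
which is a polynomial in $(\lambda,s)$. At $s=0$ it factors as $F(\lambda,0)=\lambda(\lambda^3+\lambda^2+2\lambda+1)$. This gives the root $\lambda=0$ together with the three roots $e_0,j_0,z_0$ of the cubic $g(\lambda):=\lambda^3+\lambda^2+2\lambda+1$.

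The first step is to check that these four roots are simple.
\begin{itemize}
\item The root $0$ is simple because $g(0)=1\neq0$.
\item The roots of $g$ are distinct. This follows from a discriminant computation, or from the observation that $g'(\lambda)=3\lambda^2+2\lambda+2$ has no real zeros, so $g$ is strictly increasing on $\mathbb R$. Hence $g$ has exactly one real root, which lies in $(-1,0)$ since $g(-1)=-1$ and $g(0)=1$, together with a pair of complex-conjugate roots.
\end{itemize}
Since the roots are simple, $\partial_\lambda F(\lambda_0,0)\ne0$ at each of them. The analytic implicit function theorem then yields four analytic root branches $\lambda_j(s)$ near $s=0$.

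The expansion coefficients follow by differentiating $F(\lambda(s),s)=0$:
\[
\lambda'(0)=-\frac{\partial_sF}{\partial_\lambda F}\Big|_{s=0}=-\frac{c^2(\lambda_0^2+1)}{4\lambda_0^3+3\lambda_0^2+4\lambda_0+1}.
\]
For $\lambda_0\in\{e_0,j_0,z_0\}$ this is exactly $e_2,j_2,z_2$. For $\lambda_0=0$ it gives $-c^2$. Analyticity in $s=|\xi|^2$ then produces the stated expansions with remainder $O(s^2)=O(|\xi|^4)$.

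For the real-part bounds, we must show that $e_0,j_0,z_0$ all have negative real part. The Routh--Hurwitz criterion for the cubic $\lambda^3+a_1\lambda^2+a_2\lambda+a_3$ requires $a_1,a_3>0$ and $a_1a_2>a_3$. Here $a_1=1$, $a_2=2$, $a_3=1$, so the condition is $1\cdot2>1$, which holds. Set $2\eta_1:=\min\operatorname{Re}(-\lambda_0)>0$ over the three cubic roots. Shrinking $r_0$ so that the $O(|\xi|^2)$ corrections are smaller than $\eta_1$ gives $\operatorname{Re}\lambda_j\le-\eta_1$ for $j=1,2,3$.

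For $\lambda_4$ we have $\operatorname{Re}\lambda_4=-c^2|\xi|^2+O(|\xi|^4)\le-\tfrac{c^2}{2}|\xi|^2$ for small $|\xi|$, since $\lambda_4$ is real-analytic in $s$ with real coefficients (the polynomial is real). Decreasing $\eta_1$ to $\min(\eta_1,c^2/2)$ finishes the estimate.

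The only mildly delicate point is the uniformity of the remainders and of the root separation on the whole disc $|\xi|\le2r_0$. This is handled by choosing $r_0$ smaller than the radius of analyticity of all four branches. The branches are distinct near $s=0$, so together they exhaust all four roots of the quartic there. Beyond this, the argument is routine; the verification that $g$ has simple roots and the Hurwitz condition are the only substantive checks.
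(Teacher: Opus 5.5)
Your proposal is correct and follows essentially the same route as the paper. Both arguments apply the implicit function theorem in the parameter $s=|\xi|^2$ at the simple roots of $F(\lambda,0)=\lambda(\lambda^3+\lambda^2+2\lambda+1)$, obtain the coefficients from $\lambda'(0)=-\partial_sF/\partial_\lambda F$, and use the Routh--Hurwitz condition $1\cdot 2>1$ for the cubic; your explicit check that the cubic's roots are simple (via $g'>0$ on $\mathbb R$) supplies a detail the paper only asserts.
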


\begin{proof}
Put $s=|\xi|^2$ and define
\begin{align*}
F(s,\lambda)=\lambda^4+\lambda^3+(2+c^2s)\lambda^2+\lambda+c^2s.
\end{align*}
At $s=0$,
\[
F(0,\lambda)=\lambda\left(\lambda^3+\lambda^2+2\lambda+1\right).
\]
The root $\lambda=0$ is simple, and the three nonzero roots $e_0,j_0,z_0$ are also simple. For any $\mu\in\{e_0,j_0,z_0\}$, the Implicit Function Theorem gives
\[
\lambda(s)=\mu+\lambda'(0)s+O(s^2).
\]
Since
\[
F_s(s,\lambda)=c^2(\lambda^2+1),\quad
F_\lambda(s,\lambda)=4\lambda^3+3\lambda^2+2(2+c^2s)\lambda+1,
\]
we have
\[
\lambda'(0)=-\frac{F_s(0,\mu)}{F_\lambda(0,\mu)}
=\frac{-(\mu^2+1)c^2}{4\mu^3+3\mu^2+4\mu+1}.
\]
This yields the expansions of $\lambda_1,\lambda_2,\lambda_3$.  For the root near zero, writing $\lambda_4=d_1s+O(s^2)$ and substituting into $F(s,\lambda_4)=0$ yield $d_1=-c^2$. Hence,
\[
\lambda_4=-c^2|\xi|^2+O(|\xi|^4).
\]
The cubic $\lambda^3+\lambda^2+2\lambda+1$ is Hurwitz, since its coefficients are positive and $1\cdot 2>1$. Thus the three nonzero roots have negative real parts, i.e.,
\[
\max\{\operatorname{Re}e_0,\operatorname{Re}j_0,\operatorname{Re}z_0\}<0.
\]
Since all four roots of $F(0,\lambda)$ are simple, the Implicit Function Theorem implies that the corresponding eigenvalue branches are analytic in $s=|\xi|^2$ in a neighborhood of $s=0$. Hence, by choosing $r_0>0$ sufficiently small so that $0\le s\le 4r_0^2$ lies entirely in this common neighborhood, the above Taylor expansions, together with their remainder estimates, hold uniformly for $|\xi|\le 2r_0$. The strict negativity of the three nonzero roots and the expansion $\lambda_4=-c^2|\xi|^2+O(|\xi|^4)$ then give the asserted bounds with a common $\eta_1>0$ due to the smallness of $r_0$. Thus there exists $\eta_1>0$ such that
	\[
	\operatorname{Re}\lambda_j(\xi)\le-\eta_1\quad (j=1,2,3),
	\qquad
	\operatorname{Re}\lambda_4(\xi)\le-\eta_1|\xi|^2,
	\quad |\xi|\le2r_0.
	\]
This completes the proof.
\end{proof}

\subsection{Middle-frequency analysis} We then investigate the uniform spectral stability of the eigenvalues in the middle-frequency part.

\begin{lem}\label{lem-middle-gap}
	Let $0<r_0<R_0<\infty$ be fixed. There exists a constant
	$ \eta_2 = \eta_2 (c,r_0,R_0)>0$ such that the four roots of
	\eqref{char-eq} satisfy
	\begin{equation}\label{middle-spectral-gap}
		\max_{1\le j\le4}\operatorname{Re}\lambda_j(\xi)
		\le - 2\eta_2, 
		\qquad r_0\le |\xi|\le R_0.
	\end{equation}
\end{lem}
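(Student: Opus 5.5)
The plan is a compactness argument. Roots of $p(\lambda,\xi)$ depend only on $s=|\xi|^2$, they vary continuously with $s$, and the annulus $r_0\le|\xi|\le R_0$ corresponds to the compact interval $s\in[r_0^2,R_0^2]$. So it suffices to show that for every $s>0$ no root of
\[
F(s,\lambda)=\lambda^4+\lambda^3+(2+c^2s)\lambda^2+\lambda+c^2s
\]
lies on or to the right of the imaginary axis. Since $F$ is monic in $\lambda$, the set of roots depends continuously on $s$ (Hausdorff sense), so $\max_j\operatorname{Re}\lambda_j$ is continuous in $s$. It then attains a negative maximum on the compact interval, and we call this maximum $-2\eta_2$.

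The key step is the strict Hurwitz property of $F(s,\cdot)$ for $s>0$. I would use the Routh--Hurwitz criterion for a quartic $\lambda^4+a_1\lambda^3+a_2\lambda^2+a_3\lambda+a_4$ with $a_1=1$, $a_2=2+c^2s$, $a_3=1$, $a_4=c^2s$. All coefficients are positive when $s>0$, and $a_1a_2-a_3=1+c^2s>0$. The remaining condition is
\[
a_1a_2a_3-a_3^2-a_1^2a_4=(2+c^2s)-1-c^2s=1>0.
\]
Hence all roots have strictly negative real part for each $s>0$.

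Alternatively, I would confirm directly that $\lambda=i\omega$ is never a root. Separating real and imaginary parts gives
\[
\omega^4-(2+c^2s)\omega^2+c^2s=0,\qquad -\omega^3+\omega=0.
\]
The second equation gives $\omega\in\{0,\pm1\}$. For $\omega=0$ the first equation gives $c^2s=0$, which is impossible. For $\omega=\pm1$ it gives $1-2-c^2s+c^2s=-1\neq0$. So there are no imaginary roots for $s>0$. Continuity from the low-frequency picture of Lemma \ref{lem-low}, where all roots lie in the open left half plane for small $s>0$, then forces them to stay there for all $s>0$. This gives a consistency check on the Hurwitz computation.

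I expect no real obstacle. The only delicate point is justifying continuity of $\max_j\operatorname{Re}\lambda_j$ across possible root collisions. This follows from continuity of roots of monic polynomials, for instance via Rouché's theorem, without needing the individual branches to be smooth. The constant $\eta_2$ then depends only on $c,r_0,R_0$.
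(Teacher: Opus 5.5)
Your proposal is correct and follows essentially the same route as the paper: the paper also computes the Hurwitz determinants, with $\Delta_3=1$ and $\Delta_4=c^2|\xi|^2\Delta_3>0$, to get strict stability for each fixed $|\xi|>0$, and then uses compactness of the annulus to obtain a uniform gap. The only difference is in the compactness step: the paper argues by contradiction with a sequence of roots $\lambda_n$, using a root bound to extract a limit root with $\operatorname{Re}\lambda_*=0$, whereas you invoke continuity of the unordered root set, which is equivalent; your imaginary-axis check is a valid independent confirmation of the Hurwitz computation.
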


\begin{proof}
	Set $s=c^2|\xi|^2>0$. The characteristic polynomial \eqref{char-eq} becomes
	\[
	p_s(\lambda)
	=\lambda^4+\lambda^3+(2+s)\lambda^2+\lambda+s.
	\]
	Its Hurwitz determinants are
	\[
	\Delta_1=1,\qquad
	\Delta_2=(2+s)-1=1+s,\qquad
	\Delta_3=(2+s)-s-1=1,\qquad
	\Delta_4=s\Delta_3=s.
	\]
	Since all four determinants are positive, the Routh--Hurwitz
	criterion implies that every root of $p_s$ has strictly negative
	real part for each $s>0$.
	
	Suppose that \eqref{middle-spectral-gap} fails. Then there exist
	$s_n\in[c^2r_0^2,c^2R_0^2]$ and roots $\lambda_n$ of $p_{s_n}$
	such that $\operatorname{Re}\lambda_n\to0$. The  root bound
	gives $|\lambda_n|\le 3+c^2R_0^2$. Passing to a subsequence, we have
	\[
	s_n\to s_*\in[c^2r_0^2,c^2R_0^2],
	\qquad \lambda_n\to\lambda_*.
	\]
	By continuity,
	\[
	p_{s_*}(\lambda_*)=0,
	\qquad \operatorname{Re}\lambda_*=0.
	\]
	Since $s_*>0$, this contradicts the strict negativity established
	above. Thus the negative real parts are bounded away from zero uniformly on the annulus. Taking $\eta_2$ to be half of this positive lower bound gives \eqref{middle-spectral-gap}. This completes the proof. 
\end{proof}


\subsection{High-frequency analysis}
We next consider the limit $|\xi|\to\infty$, where two modes have degenerate real parts and produce the regularity-loss behavior.

\begin{lem}\label{lem-high}
There exists $R_0>0$ sufficiently large such that, for $|\xi|\ge R_0/2$, the four roots satisfy
\begin{align*}
\lambda_1&=i-\frac{i}{2c^2}|\xi|^{-2}
-\left(\frac12+\frac{i}{8}\right)c^{-4}|\xi|^{-4}+O(|\xi|^{-6}),\\
\lambda_2&=-i+\frac{i}{2c^2}|\xi|^{-2}
-\left(\frac12-\frac{i}{8}\right)c^{-4}|\xi|^{-4}+O(|\xi|^{-6}),\\
\lambda_3&=-\frac12+ic|\xi|+\frac{3i}{8c}|\xi|^{-1}+O(|\xi|^{-2}),\quad \lambda_4 =-\frac12-ic|\xi|-\frac{3i}{8c}|\xi|^{-1}+O(|\xi|^{-2}).
\end{align*}
Hence there exists $\eta_3 >0$ such that
\begin{align}\label{high-eigen-decay}
\begin{aligned}
|e^{\lambda_1t}|+|e^{\lambda_2t}| &\le C\exp\left(-\eta_3  t|\xi|^{-4}\right),\\
|e^{\lambda_3t}|+|e^{\lambda_4t}| &\le Ce^{-\eta_3  t},
\quad |\xi|\ge R_0/2.
\end{aligned}
\end{align}
\end{lem}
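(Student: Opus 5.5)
The plan is to treat the characteristic polynomial \eqref{char-eq} as a singular perturbation in the small parameter $|\xi|^{-1}$. Set $s=c^2|\xi|^2$, so that $p=\lambda^4+\lambda^3+(2+s)\lambda^2+\lambda+s$. Since $p$ is a quartic, it suffices to isolate two bounded roots and two roots of size $\sqrt s$, each by a regular Implicit Function Theorem argument in a rescaled variable. The total count then shows that these are all four roots.

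\emph{Bounded roots.} Put $\varepsilon=1/s=c^{-2}|\xi|^{-2}$ and divide by $s$:
\[
F(\varepsilon,\lambda):=\lambda^2+1+\varepsilon\left(\lambda^4+\lambda^3+2\lambda^2+\lambda\right)=0 .
\]
At $\varepsilon=0$ the roots $\lambda=\pm i$ are simple, since $F_\lambda=2\lambda\neq0$. The Implicit Function Theorem then gives two branches $\lambda_{1,2}(\varepsilon)$, analytic near $\varepsilon=0$.

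For the first-order term at $\lambda=i$ one has $F_\varepsilon(0,i)=1-i-2+i=-1$ and $F_\lambda(0,i)=2i$. Hence $\lambda_1'(0)=1/(2i)=-i/2$, which is the term $-\frac{i}{2c^2}|\xi|^{-2}$ in the statement.

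For the second-order coefficient, substitute $\lambda=i-\tfrac i2\varepsilon+\beta\varepsilon^2+O(\varepsilon^3)$ into $F$ and collect the $\varepsilon^2$ terms. This uses $(\lambda^4+\lambda^3+2\lambda^2+\lambda)'|_{\lambda=i}=4i^3+3i^2+4i+1=-2$. The result should be $\beta=-\tfrac12-\tfrac i8$, and the analyticity in $\varepsilon$ yields the remainder $O(\varepsilon^3)=O(|\xi|^{-6})$. The branch near $-i$ is obtained by complex conjugation, because $p$ has real coefficients.

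\emph{Large roots.} Put $\delta=s^{-1/2}=c^{-1}|\xi|^{-1}$ and $\lambda=\mu/\delta$, and divide by $s^2$:
\[
G(\delta,\mu):=\mu^4+\mu^2+\delta\mu^3+2\delta^2\mu^2+\delta^3\mu+\delta^2=0 .
\]
At $\delta=0$ we get $\mu^2(\mu^2+1)=0$. The double root $\mu=0$ corresponds to the bounded roots already found, while $\mu=\pm i$ are simple.

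At $\mu=i$ one has $G_\mu=4\mu^3+2\mu=-2i$ and $G_\delta=\mu^3=-i$. This gives $\mu'(0)=-\tfrac12$, and therefore $\lambda_3=ic|\xi|-\tfrac12+O(|\xi|^{-1})$. Expanding one order further in $\delta$ produces the coefficient $\frac{3i}{8c}|\xi|^{-1}$ and a remainder $O(\delta^2)/\delta=O(|\xi|^{-1})$. The stated $O(|\xi|^{-2})$ should follow from going one more order in $\delta$ and noting that the $\delta^3$ coefficient is real. As a sanity check, the trace identity $\sum_j\lambda_j=-1$ is consistent with these expansions. The root $\lambda_4$ is again the conjugate of $\lambda_3$.

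\emph{Decay bounds and where the difficulty lies.} The decay bounds \eqref{high-eigen-decay} then follow directly. We have $\operatorname{Re}\lambda_{1,2}=-\tfrac12c^{-4}|\xi|^{-4}+O(|\xi|^{-6})\le-\tfrac14c^{-4}|\xi|^{-4}$ and $\operatorname{Re}\lambda_{3,4}\le-\tfrac14$ once $R_0$ is large, so one can take $\eta_3=\min\{\tfrac14c^{-4},\tfrac14\}$.

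The main obstacle is the bookkeeping at second order for the bounded roots. The leading real part $-\tfrac12c^{-4}|\xi|^{-4}$ only appears at order $\varepsilon^2$, so the expansion must be computed exactly and the $O(\varepsilon^3)$ remainder must be uniform for $|\xi|\ge R_0/2$. I would handle this by fixing a closed disc in $\varepsilon$ on which the Implicit Function Theorem branch is analytic, and applying Cauchy estimates there. Choosing $R_0$ so that $c^{-2}(R_0/2)^{-2}$ lies inside that disc, and likewise for $\delta$, gives uniform remainders and guarantees that the four branches are distinct.
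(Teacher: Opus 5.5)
Your proposal is correct and follows essentially the same route as the paper. You expand by the Implicit Function Theorem at the simple roots $\lambda=\pm i$ of the equation divided by $|\xi|^2$ (your $\varepsilon=c^{-2}|\xi|^{-2}$ is the paper's $\theta^2$ up to the factor $c^{-2}$), and at the simple roots $\pm ic$ after the rescaling $\bar\lambda=\theta\lambda$ (your $\mu=\bar\lambda/c$), then read off the real parts for the decay bounds.

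The only slip is in the remainder count for $\lambda_3$. Once $\mu$ is expanded through its $\delta^2$ term $\frac{3i}{8}\delta^2$, analyticity leaves $O(\delta^3)$ in $\mu$, hence already $O(\delta^2)=O(|\xi|^{-2})$ in $\lambda_3=\mu/\delta$, so no reality argument is needed (in fact the $\delta^3$ coefficient of $\mu$ vanishes).
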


\proof
Let $\theta=|\xi|^{-1}$. Multiplying \eqref{char-eq} by $\theta^2$ gives
\[
F_2(\theta,\lambda) =\theta^2\lambda^4+\theta^2\lambda^3+(2\theta^2+c^2)\lambda^2+\theta^2\lambda+c^2=0.
\]
At $\theta=0$, the two bounded roots are $\lambda=\pm i$. Since
$\partial_\lambda F_2(0,\pm i)\ne0$, the Implicit Function Theorem gives expansions in even powers of $\theta$. Substitution yields
\begin{align*}
\lambda_1 =i-\frac{i}{2c^2}\theta^2 -\left(\frac12+\frac{i}{8}\right)c^{-4}\theta^4+O(\theta^6),\quad \lambda_2=-i+\frac{i}{2c^2}\theta^2 -\left(\frac12-\frac{i}{8}\right)c^{-4}\theta^4+O(\theta^6).
\end{align*}
For the remaining two roots, put $\bar\lambda=\theta\lambda$. Then
\[
\bar\lambda^4+\theta\bar\lambda^3+(2\theta^2+c^2)\bar\lambda^2
+\theta^3\bar\lambda+c^2\theta^2=0.
\]
At $\theta=0$, the two nonzero roots are $\bar\lambda=\pm ic$. Expanding around these roots gives
\begin{align*}
\bar\lambda_3 =ic-\frac12\theta+\frac{3i}{8c}\theta^2+O(\theta^3),\quad \bar\lambda_4 =-ic-\frac12\theta-\frac{3i}{8c}\theta^2+O(\theta^3).
\end{align*}
Dividing by $\theta$ gives the stated expansions of $\lambda_3$ and $\lambda_4$, i.e.,
\begin{align*}
	\lambda_3 =-\frac12+ic\theta^{-1}+\frac{3i}{8c}\theta+O(\theta^2),\quad \lambda_4 =-\frac12-ic\theta^{-1}-\frac{3i}{8c}\theta+O(\theta^2).
\end{align*}

Since the above expansions are valid for $\theta=|\xi|^{-1}$ in a
neighborhood of $\theta=0$, we choose $R_0>0$ sufficiently large so that
$0<\theta\le 2/R_0$ is contained in this neighborhood whenever
$|\xi|\ge R_0/2$. In particular, the remainder estimates are uniform
in this region. For $\lambda_1$ and $\lambda_2$, we have
\[
\operatorname{Re}\lambda_{1,2}
=-\frac{1}{2c^4}\theta^4+O(\theta^6).
\]
Since $O(\theta^6)=o(\theta^4)$ as $\theta\to0$, $R_0$ can be chosen
so that there exists $\eta_{31}>0$ such that
\[
\operatorname{Re}\lambda_{1,2}
\le -\eta_{31}\theta^4
=-\eta_{31}|\xi|^{-4},
\quad |\xi|\ge R_0/2.
\]
Similarly,
\[
\operatorname{Re}\lambda_{3,4}
=-\frac12+O(\theta^2).
\]
Since $O(\theta^2)=o(1)$ as $\theta\to0$, the same $R_0$ can be chosen
so that, for some $\eta_{32}>0$,
\[
\operatorname{Re}\lambda_{3,4}\le -\eta_{32},
\quad |\xi|\ge R_0/2.
\]
Taking $\eta_3 =\min\{\eta_{31},\eta_{32}\}>0$, we consequently obtain
\[
|e^{\lambda_1t}|+|e^{\lambda_2t}|
\le C\exp\left(-\eta_3  t|\xi|^{-4}\right),
\quad
|e^{\lambda_3t}|+|e^{\lambda_4t}|
\le Ce^{-\eta_3  t},
\]
which proves \eqref{high-eigen-decay}. This completes the proof.
\endproof

\begin{rem} 
Estimate \eqref{high-eigen-decay} shows that the two bounded high-frequency modes do not admit a uniform spectral gap. Their damping degenerates as $|\xi|\to\infty$, whereas the remaining two modes are uniformly exponentially stable.

Consequently, high-frequency time decay must be obtained by exchanging spatial regularity for temporal decay, as quantified in \eqref{regularity-loss-ineq}. This derivative--time tradeoff is the reason for working at the higher Sobolev level $N\ge10$ in the nonlinear estimates.
\end{rem}

\begin{rem}  
We compare the high-frequency structure above with that of the system in which pressure is also present in the first fluid. Suppose that a pressure term is added to the first momentum equation in \eqref{main1}, normalized so that its linearization contributes $\nabla a_1$. Introducing
\[
q_1=\operatorname{div}u_1,
\quad
q_2=\operatorname{div}u_2,
\]
we obtain the Fourier symbol
\[
\widehat{\mathcal L}_p(\xi)=
\begin{pmatrix}
0&1&0&0\\
-(1+|\xi|^2)&0&\frac{1}{c}&0\\
0&0&0&c\\
1&0&-c|\xi|^2-\frac{1}{c}&1
\end{pmatrix}.
\]
Writing $s=|\xi|^2$, the corresponding characteristic polynomial is
\[
p_p(\lambda,s)
=
\lambda^4+\lambda^3
+\bigl(2+(1+c^2)s\bigr)\lambda^2
+(1+s)\lambda
+c^2s^2+(1+c^2)s.
\]

At low frequencies, the spectrum has the same qualitative structure as in Lemma \ref{lem-low}: three roots remain uniformly separated from the imaginary axis, while the remaining root satisfies
\[
\lambda(\xi)
=-(1+c^2)|\xi|^2+O(|\xi|^4).
\]
For fixed $c>1$, a direct expansion at high frequencies gives
\[
\begin{aligned}
\lambda_{1,2}(\xi)
={}&
\pm i|\xi|
\pm\frac{i}{2|\xi|}
\mp\frac{i(c^2+3)}
{8(c^2-1)|\xi|^3}
-\frac{1}
{2(c^2-1)^2|\xi|^4}
+O(|\xi|^{-5}),\\
\lambda_{3,4}(\xi)
={}&
-\frac12
\pm ic|\xi|
\pm\frac{3i}{8c|\xi|}
\pm\frac{i(55c^2+9)}
{128c^3(c^2-1)|\xi|^3}
+\frac{1}
{2(c^2-1)^2|\xi|^4}
+O(|\xi|^{-5}).
\end{aligned}
\]
Consequently,
\[
\operatorname{Re}\lambda_{1,2}(\xi)
\sim
-\frac{1}{2(c^2-1)^2}|\xi|^{-4},
\quad
\operatorname{Re}\lambda_{3,4}(\xi)
\longrightarrow-\frac12
\]
as $|\xi|\to\infty$.

The additional pressure changes the oscillatory behavior of the weakly
damped modes: their imaginary parts, which remain bounded in the
pressureless system, become of order $|\xi|$. It also increases the
magnitude of the leading damping coefficient. Indeed, since $c>1$,
\[
\frac{1}{2(c^2-1)^2}
>
\frac{1}{2c^4},
\]
where $1/(2c^4)$ is the corresponding coefficient in
Lemma \ref{lem-high}. The order of the weak dissipation, however, remains
unchanged. In both systems, the high-frequency semigroup contains a factor
of the form
\[
e^{-\eta_3  t|\xi|^{-4}}.
\]
At the linearized level, the additional pressure improves the
high-frequency damping quantitatively but does not eliminate the
regularity-loss mechanism.
\end{rem}

%
%
%
%
%
%

\subsection{Green-function estimates}
We now combine the eigenvalue expansions with the corresponding spectral projectors to obtain pointwise estimates for the Green matrix.

Let
\[
\widehat G(t,\xi)=e^{-t\widehat{\mathcal L}(\xi)}.
\]
Choose $r_0$ small and $R_0$ large so that all four roots are simple for $|\xi|\le2r_0$ and $|\xi|\ge R_0/2$.  Then
\begin{equation}\label{green}
\widehat G(t,\xi)=\sum_{i=1}^4e^{\lambda_it}P_i(\xi),
\quad
P_i(\xi)=\frac{\operatorname{adj}(\lambda_i I+\widehat{\mathcal L}(\xi))}{\Delta_i},
\end{equation}
where
\[
\Delta_i=\partial_\lambda p(\lambda_i,\xi)=\prod_{j\ne i}(\lambda_i-\lambda_j).
\]
A direct computation gives
\begin{equation}\label{projection-exact}
P_i=\frac1{\Delta_i}
\begin{pmatrix}
\lambda_i(c^2|\xi|^2+\lambda_i^2+\lambda_i+1)
&-(c^2|\xi|^2+\lambda_i^2+\lambda_i+1)
&\frac{\lambda_i+1}{c}&-1\\
 c^2|\xi|^2+\lambda_i^2+\lambda_i
&\lambda_i(c^2|\xi|^2+\lambda_i^2+\lambda_i+1)
&-\frac{\lambda_i(\lambda_i+1)}{c}&\lambda_i\\
 c\lambda_i&-c&(\lambda_i+1)(\lambda_i^2+1)&-c(\lambda_i^2+1)\\
-\lambda_i^2&\lambda_i
&\frac{(1+c^2|\xi|^2)\lambda_i^2+c^2|\xi|^2}{c}
&\lambda_i(\lambda_i^2+1)
\end{pmatrix}.
\end{equation}
Inserting the eigenvalue expansions into \eqref{projection-exact} yields the following orders.

For $|\xi|\le 2r_0$, by Lemma \ref{lem-low}, one has
\begin{align}\label{low-projection}
P_1=O(1),\quad P_2=O(1),\quad P_3=O(1),
\end{align}
and
\begin{align}\label{low-P4}
P_4=
\begin{pmatrix}
O(|\xi|^2)&O(1)&O(1)&O(1)\\
O(|\xi|^4)&O(|\xi|^2)&O(|\xi|^2)&O(|\xi|^2)\\
O(|\xi|^2)&O(1)&O(1)&O(1)\\
O(|\xi|^4)&O(|\xi|^2)&O(|\xi|^2)&O(|\xi|^2)
\end{pmatrix}.
\end{align}
For the charge variable $a_1-c^{-1}a_2$, the stronger cancellation
\begin{align}\label{low-charge-cancel}
\left(1,0,-\frac1c,0\right)P_4
=\left(O(|\xi|^6),O(|\xi|^4),O(|\xi|^4),O(|\xi|^4)\right).
\end{align}
Indeed, the first entry in \eqref{low-charge-cancel} equals
\[
\frac{\lambda_4(c^2|\xi|^2+\lambda_4^2+\lambda_4)}{\Delta_4}=O(|\xi|^6),
\]
while the remaining three entries are $O(|\xi|^4)$.

For $|\xi|\ge R_0/2$, by Lemma \ref{lem-high}, the two weakly damped projectors satisfy
\begin{align}\label{high-P12}
P_1,\ P_2=
\begin{pmatrix}
O(1)&O(1)&O(|\xi|^{-2})&O(|\xi|^{-2})\\
O(1)&O(1)&O(|\xi|^{-2})&O(|\xi|^{-2})\\
O(|\xi|^{-2})&O(|\xi|^{-2})&O(|\xi|^{-4})&O(|\xi|^{-4})\\
O(|\xi|^{-2})&O(|\xi|^{-2})&O(|\xi|^{-4})&O(|\xi|^{-4})
\end{pmatrix},
\end{align}
whereas the strongly damped projectors satisfy
\begin{align}\label{high-P34}
P_3,\ P_4=
\begin{pmatrix}
O(|\xi|^{-4})&O(|\xi|^{-5})&O(|\xi|^{-2})&O(|\xi|^{-3})\\
O(|\xi|^{-3})&O(|\xi|^{-4})&O(|\xi|^{-1})&O(|\xi|^{-2})\\
O(|\xi|^{-2})&O(|\xi|^{-3})&O(1)&O(|\xi|^{-1})\\
O(|\xi|^{-1})&O(|\xi|^{-2})&O(|\xi|)&O(1)
\end{pmatrix}.
\end{align}
For convenience, set
\[
\mathcal A=(\alpha_{ij})=
\begin{pmatrix}
2&0&0&0\\
4&2&2&2\\
2&0&0&0\\
4&2&2&2
\end{pmatrix},\quad
\mathcal S=(\sigma_{ij})=
\begin{pmatrix}
0&0&2&2\\
0&0&2&2\\
2&2&4&4\\
2&2&4&4
\end{pmatrix},
\]
\[
\mathcal B=(\beta_{ij})=
\begin{pmatrix}
-4&-5&-2&-3\\
-3&-4&-1&-2\\
-2&-3&0&-1\\
-1&-2&1&0
\end{pmatrix}.
\]
Also set
\[
\widehat G^\nu(t,\xi)=\widehat\chi_\nu(\xi)\widehat G(t,\xi),
\quad \nu\in\{\ell,m,h\}.
\]

\begin{lem}[Green-function estimates]\label{lem-green-estimates}
 With the positive constants $\eta_1$, $\eta_2$, and $\eta_3$ from Lemmas \ref{lem-low}, \ref{lem-middle-gap}, and \ref{lem-high}, there exists $C>0$ such that, for all $t\ge0$, the following estimates hold.

\smallskip
\noindent\emph{(i) Low-frequency estimate.} For $|\xi|\le2r_0$,
\begin{align}\label{low-G-pointwise}
|\widehat G_{ij}(t,\xi)|
\le Ce^{-\eta_1  t}+C|\xi|^{\alpha_{ij}}e^{-\eta_1 |\xi|^2t},
\quad 1\le i,j\le4,
\end{align}
and
\[
\left|\left(1,0,-\frac1c,0\right)\widehat G(t,\xi)\right| \le Ce^{-\eta_1  t}(1,1,1,1) +Ce^{-\eta_1 |\xi|^2t} \big(|\xi|^6,|\xi|^4,|\xi|^4,|\xi|^4\big).
\]
If
\[
\widehat{K_\mu^\ell(t)f}(\xi)
=\widehat\chi_\ell(\xi)|\xi|^\mu e^{-\eta_1 |\xi|^2t}\widehat f(\xi),
\quad \mu\ge0,
\]
then, for every integer $k\ge0$,
\begin{align}
\|\nabla^kK_\mu^\ell(t)f\|_{L^2}
&\le C(1+t)^{-\frac34-\frac{k+\mu}{2}}\|f\|_{L^1}, \notag \\
\|\nabla^kK_\mu^\ell(t)f\|_{L^2}
&\le C(1+t)^{-\frac{k+\mu}{2}}\|f\|_{L^2},  \notag \\
\label{low-kernel-L1inf}
\|\nabla^kK_\mu^\ell(t)f\|_{L^\infty}
&\le C(1+t)^{-\frac32-\frac{k+\mu}{2}}\|f\|_{L^1},\\
\label{low-kernel-L2inf}
\|\nabla^kK_\mu^\ell(t)f\|_{L^\infty}
&\le C(1+t)^{-\frac34-\frac{k+\mu}{2}}\|f\|_{L^2}.
\end{align}

\smallskip
\noindent\emph{(ii) Middle-frequency estimate.} For $r_0\le|\xi|\le R_0$,
\begin{align}\label{middle-decay}
|\widehat G_{ij}(t,\xi)|\le Ce^{-\eta_2   t},
\quad 1\le i,j\le4.
\end{align}
Consequently, for every integer $k\ge0$,
\begin{align}\label{middle-G-operator}
\|\nabla^k(G^m(t)*f)\|_{L^2}
+\|\nabla^k(G^m(t)*f)\|_{L^\infty}
\le C_k e^{-\eta_2   t}\|f\|_{L^2}.
\end{align}

\smallskip
\noindent\emph{(iii) High-frequency part.} For $|\xi|\ge R_0/2$,
\begin{align}\label{high-G-pointwise}
|\widehat G_{ij}(t,\xi)|
\le C|\xi|^{-\sigma_{ij}}e^{-\eta_3  t|\xi|^{-4}}
+C|\xi|^{\beta_{ij}}e^{-\eta_3  t},
\quad 1\le i,j\le4.
\end{align}
Furthermore, for every $\ell\ge0$,
\begin{align}\label{regularity-loss-ineq}
e^{-\eta_3  t|\xi|^{-4}}
\le C_\ell(1+t)^{-\frac\ell4}|\xi|^\ell,
\end{align}
and hence, for every integer $k\ge0$ and $\ell\ge1$,
\begin{align}\label{high-linear-general}
\|\nabla^k(G^h(t)*f)\|_{L^2}
\le C_\ell(1+t)^{-\frac\ell4}\|f\|_{H^{k+\ell}}.
\end{align}
\end{lem}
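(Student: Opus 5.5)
The plan is to use the spectral decomposition \eqref{green}, $\widehat G=\sum_i e^{\lambda_i t}P_i$, in each frequency region. The pointwise eigenvalue bounds come from Lemmas \ref{lem-low}, \ref{lem-middle-gap}, \ref{lem-high}, and the projector orders come from \eqref{low-projection}--\eqref{high-P34}. The $L^p$ operator bounds then follow from Plancherel and Hausdorff--Young.

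\emph{(i) Low frequencies.} For $|\xi|\le2r_0$ we have $\operatorname{Re}\lambda_{1,2,3}\le-\eta_1$ and $P_{1,2,3}=O(1)$, so these three modes contribute $Ce^{-\eta_1t}$. For the diffusive mode, $|e^{\lambda_4t}|\le e^{-\eta_1|\xi|^2t}$. Reading off the orders in \eqref{low-P4} gives the exponents $\alpha_{ij}$; for example, row $2$ of $P_4$ is $(|\xi|^4,|\xi|^2,|\xi|^2,|\xi|^2)$, which matches row $2$ of $\mathcal A$. The charge estimate is the same argument with \eqref{low-charge-cancel} in place of \eqref{low-P4}.

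For the kernel bounds:
\begin{itemize}
\item The $L^1\to L^2$ bound uses $\|\widehat f\|_{L^\infty}\le\|f\|_{L^1}$ and
\[
\int_{|\xi|\le2r_0}|\xi|^{2(k+\mu)}e^{-2\eta_1|\xi|^2t}\,d\xi\lesssim(1+t)^{-\frac32-(k+\mu)}.
\]
The factor $(1+t)$ rather than $t$ is harmless because the integral is over a bounded ball.
\item The $L^2\to L^2$ bound uses $\sup_{|\xi|\le 2r_0}|\xi|^{k+\mu}e^{-\eta_1|\xi|^2t}\lesssim(1+t)^{-(k+\mu)/2}$.
\item The $L^\infty$ bounds use $\|g\|_{L^\infty}\le(2\pi)^{-3}\|\widehat g\|_{L^1}$. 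With $\widehat f\in L^\infty$ one integrates $|\xi|^{k+\mu}e^{-\eta_1|\xi|^2t}$ to get $(1+t)^{-\frac32-\frac{k+\mu}2}$. With $\widehat f\in L^2$ one applies Cauchy--Schwarz, which gives $(1+t)^{-\frac34-\frac{k+\mu}2}$.
\end{itemize}

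\emph{(ii) Middle frequencies.} Roots may coincide on the middle annulus, so \eqref{green} may not be usable there, and I would avoid projectors entirely. The set $\{r_0\le|\xi|\le R_0\}$ is compact and $\widehat{\mathcal L}(\xi)$ is continuous. By Lemma \ref{lem-middle-gap} the spectrum of $-\widehat{\mathcal L}$ lies in $\{\operatorname{Re}\lambda\le-2\eta_2\}$. Writing $e^{-t\widehat{\mathcal L}}$ as a Dunford integral over a fixed contour in $\{\operatorname{Re}\lambda\le-\eta_2\}$ enclosing all roots (uniform by compactness), or alternatively using the Jordan form with polynomial factors absorbed, gives $|\widehat G|\le Ce^{-\eta_2t}$. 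Since $\widehat\chi_m$ has compact support, the factor $|\xi|^k$ is bounded there. Plancherel gives the $L^2$ bound, and $\|\widehat g\|_{L^1}\le C\|\widehat g\|_{L^2}$ on the bounded support gives the $L^\infty$ bound, which together yield \eqref{middle-G-operator}.

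\emph{(iii) High frequencies.} Combining \eqref{high-eigen-decay} with \eqref{high-P12} gives the $e^{-\eta_3t|\xi|^{-4}}$ term with powers $|\xi|^{-\sigma_{ij}}$. Combining \eqref{high-eigen-decay} with \eqref{high-P34} gives the $e^{-\eta_3t}$ term with powers $|\xi|^{\beta_{ij}}$.

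For \eqref{regularity-loss-ineq}:
\begin{itemize}
\item Set $y=t|\xi|^{-4}$; then $e^{-\eta_3y}\le C_\ell y^{-\ell/4}$, so $e^{-\eta_3t|\xi|^{-4}}\le C_\ell t^{-\ell/4}|\xi|^{\ell}$.
\item For $t\le1$, use $e^{-\eta_3t|\xi|^{-4}}\le1\le|\xi|^\ell$, valid since $|\xi|\ge R_0/2>1$.
\end{itemize}
Combining the two cases replaces $t$ by $1+t$.

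For \eqref{high-linear-general}, note that $\sigma_{ij}\ge0$ and $\beta_{ij}\le1$. Hence
\[
|\xi|^k|\widehat G^h_{ij}|\lesssim(1+t)^{-\ell/4}|\xi|^{k+\ell}+e^{-\eta_3t}|\xi|^{k+1}.
\]
The second term is dominated by $(1+t)^{-\ell/4}\langle\xi\rangle^{k+\ell}$ because $\ell\ge1$, and Plancherel finishes the estimate.

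\emph{Main obstacle.} The real work lies in verifying the projector orders in \eqref{low-P4}, \eqref{low-charge-cancel}, \eqref{high-P12}, \eqref{high-P34}, in particular the extra cancellation in the charge row and the smallness of the off-diagonal blocks of $P_{1,2}$. This requires substituting the expansions into \eqref{projection-exact} and computing $\Delta_i$:
\begin{itemize}
\item At low frequency, $\Delta_4\to1$ while $\lambda_4=O(|\xi|^2)$.
\item At high frequency, $\Delta_{1,2}\sim|\xi|^2$ and $\Delta_{3,4}\sim|\xi|^3$.
\end{itemize}
Since the paper lists these orders before the lemma, I treat them as given and check only a few representative entries.
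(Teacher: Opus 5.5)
Your proposal is correct and follows essentially the same route as the paper: the spectral decomposition \eqref{green} with the tabulated projector orders at low and high frequencies, the standard heat-kernel multiplier bounds, and the regularity-loss trade-off in which the $(4,3)$ entry of order $|\xi|$ is absorbed by $\ell\ge1$. The one difference is the middle region. There you use a fixed Dunford contour lying in $\{\operatorname{Re}\lambda\le-\eta_2\}$, with the resolvent uniformly bounded by compactness, whereas the paper uses a unitary Schur decomposition; both give uniform bounds on the annulus, but your alternative Jordan-form route would not be uniform in $\xi$ near eigenvalue coalescence, so keep the contour integral.
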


\begin{proof}
The low-frequency bounds follow from \eqref{green}, \eqref{low-projection}--\eqref{low-P4}, and \eqref{low-charge-cancel}; the four kernel estimates are the standard heat-kernel multiplier bounds.

 For middle frequencies, Lemma \ref{lem-middle-gap} gives
$\operatorname{Re}\lambda_j(\xi)\le-2\eta_2$ on the fixed annulus.
The matrices $-\widehat{\mathcal L}(\xi)$ are uniformly bounded there.
A unitary Schur decomposition and successive integration of the resulting
upper triangular system therefore yield
\[
\|\widehat G(t,\xi)\|
\le C(1+t+t^2+t^3)e^{-2\eta_2t}
\le Ce^{-\eta_2t}.
\]
This proves \eqref{middle-decay}; \eqref{middle-G-operator} follows from
Plancherel and the bounded frequency support.

Finally, \eqref{high-G-pointwise} follows from \eqref{green}, \eqref{high-P12}--\eqref{high-P34}, and Lemma \ref{lem-high}. Since
\[
(1+t)^{\frac{\ell}{4}}|\xi|^{-\ell}e^{-\eta_3  t|\xi|^{-4}}
\le C_\ell,
\]
we obtain \eqref{regularity-loss-ineq}, and \eqref{high-linear-general} follows by Plancherel. The factor $|\xi|$ in the $(4,3)$ entry of \eqref{high-P34} is absorbed by $\ell\ge1$.

This completes the proof.
\end{proof}

%
%
%
%
%
%

\subsection{Linear decay estimates}

We next convert the Green-function bounds into decay estimates for the linearized solution. The divergence structure of the initial velocities provides an additional low-frequency factor.

By definition,
\[
q_{10}=\operatorname{div}u_{10},\quad q_{20}=\operatorname{div}u_{20}.
\]
For every $i=1,2,3,4$, integration by parts in the convolution gives
\begin{align}\label{IBP-Green}
G_{i2}(t)*q_{10}
=\sum_{m=1}^3\partial_mG_{i2}(t)*u_{10,m},\quad
G_{i4}(t)*q_{20}
=\sum_{m=1}^3\partial_mG_{i4}(t)*u_{20,m}.
\end{align}
Thus one derivative is transferred to the Green function, providing one additional low-frequency factor $|\xi|$.

\begin{prop}\label{prop-linear-decay}
Let $N\ge10$ and assume
\[
\mathcal I_N :=\|a_{20}\|_{\dot H^{-1}} +\|a_{10}\|_{H^{N-1}} +\|(u_{10},a_{20},u_{20})\|_{H^N}<\infty.
\]
Then $\mathcal I_N\le \varepsilon_N$, with $\varepsilon_N$ defined in \eqref{main-smallness}.
Let $U=(a_1,q_1,a_2,q_2)^T$ solve \eqref{lin-aq}. Then, for all $t\ge0$,
\begin{align}\label{linear-density-decay}
\|(a_1,a_2)(t)\|_{L^2}
&\le C(1+t)^{-\frac12}\mathcal I_N,\\
\|\nabla(a_1,a_2)(t)\|_{H^1}
&\le C(1+t)^{-1}\mathcal I_N, \notag\\
\label{linear-q-decay}
\|q_1(t)\|_{H^2}+\|q_2(t)\|_{H^2}
&\le C(1+t)^{-\frac54}\mathcal I_N.
\end{align}
In particular,
\begin{align}\label{linear-q-integrable}
\int_0^\infty\|q_1(t)\|_{H^2}^2\,dt
\le C\mathcal I_N^2.
\end{align}
If, in addition, $\operatorname{curl}u_{10}=0$, then
\begin{align}\label{linear-u1-integrable}
\int_0^\infty\|\nabla u_1(t)\|_{H^2}^2\,dt
\le C\mathcal I_N^2.
\end{align}
\end{prop}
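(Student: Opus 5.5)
The proof is a direct Duhamel-free computation: write $U(t)=G(t)*U_0$ and split $G=G^\ell+G^m+G^h$ using the cutoffs $\widehat\chi_\ell,\widehat\chi_m,\widehat\chi_h$. Each entry of $U(t)$ is then estimated frequency region by frequency region using Lemma \ref{lem-green-estimates}. The inequality $\mathcal I_N\le\varepsilon_N$ is immediate from \eqref{main-smallness}, since $\mathcal I_N$ consists of a subset of the terms there.

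\emph{Low frequencies.} By \eqref{IBP-Green}, the contributions of $q_{10},q_{20}$ carry an extra factor $|\xi|$ acting on $u_{10},u_{20}\in L^2$. For $a_1$ and $a_2$ (rows $1$ and $3$), \eqref{low-G-pointwise} with $\alpha_{11}=\alpha_{31}=2$ and all other exponents in these rows equal to $0$ gives the following.
\begin{itemize}
\item The $a_{10}$ term is bounded by $K^\ell_2(t)a_{10}$, which is $O((1+t)^{-1})\|a_{10}\|_{L^2}$.
\item The $q_{j0}$ terms are bounded by $K^\ell_1(t)u_{j0}$, which is $O((1+t)^{-1/2})\|u_{j0}\|_{L^2}$.
\item The $a_{20}$ term has no gain from the symbol. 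Here we write $1=|\xi|\cdot|\xi|^{-1}$ and use $a_{20}\in\dot H^{-1}$ to get $O((1+t)^{-1/2})\|a_{20}\|_{\dot H^{-1}}$. This is exactly where the $\dot H^{-1}$ hypothesis enters.
\end{itemize}
The $e^{-\eta_1 t}$ parts are harmless. Each spatial derivative gains another $(1+t)^{-1/2}$, which yields the $L^2$ bound \eqref{linear-density-decay} and the $(1+t)^{-1}$ bound for $\nabla(a_1,a_2)$ in $H^1$. For $q_1$ and $q_2$ (rows $2$ and $4$), the exponents are $\alpha_{i1}=4$ and $\alpha_{ij}=2$ otherwise. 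The same bookkeeping gives at least $(1+t)^{-3/2}$ in $H^2$, which is better than claimed.

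\emph{Middle and high frequencies.} The middle part decays like $e^{-\eta_2 t}\|U_0\|_{L^2}$ by \eqref{middle-G-operator}. For the high part I apply \eqref{high-linear-general} with $k\le2$.
\begin{itemize}
\item For the densities I take $\ell=4$, obtaining $(1+t)^{-1}\|U_0\|_{H^{6}}$. This covers the $L^2$ and $\nabla H^1$ claims, since $(1+t)^{-1}\le(1+t)^{-1/2}$.
\item For $q_1,q_2$ in $H^2$ I take $\ell=5$, giving $(1+t)^{-5/4}\|U_0\|_{H^{7}}$.
\end{itemize}
This high-frequency part is the step that limits the rate to $5/4$. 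I must check that the regularity is available. We need $a_{10}\in H^7\subset H^{N-1}$, $q_{j0}=\operatorname{div}u_{j0}\in H^{7}$, which requires $u_{j0}\in H^8\subset H^N$, and $a_{20}\in H^N$. All hold since $N\ge10$. The only delicate entry is $(4,3)$ in \eqref{high-P34}: its growth factor $|\xi|$ is absorbed by $\ell\ge1$, as noted in the lemma, at the cost of one extra derivative on $a_{20}$. This is also available. I expect this derivative and exponent bookkeeping, entry by entry against $\mathcal A,\mathcal S,\mathcal B$, to be the main, if routine, obstacle.

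\emph{Integrability and $u_1$.} Integrating $\|q_1\|_{H^2}^2\lesssim(1+t)^{-5/2}\mathcal I_N^2$ in time gives \eqref{linear-q-integrable}. For \eqref{linear-u1-integrable}, note that in the linear system \eqref{lin-system} the quantity $\partial_t u_1$ is a gradient. Hence $\operatorname{curl}u_1(t)=\operatorname{curl}u_{10}=0$. The Hodge decomposition then gives $u_1=\nabla\Delta^{-1}q_1$, so $\nabla u_1=\nabla\nabla\Delta^{-1}q_1$ is a matrix of Riesz transforms of $q_1$. By $L^2$-boundedness of Riesz transforms, $\|\nabla u_1\|_{H^2}\lesssim\|q_1\|_{H^2}$, and \eqref{linear-u1-integrable} follows from \eqref{linear-q-integrable}.
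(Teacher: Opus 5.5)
Your proposal is correct and follows the paper's proof essentially step by step. It uses the same low--middle--high splitting of $G(t)*U_0$, the same sources of the extra low-frequency factor $|\xi|$ (the $\dot H^{-1}$ assumption for $a_{20}$ and \eqref{IBP-Green} for $q_{10},q_{20}$), the regularity-loss bound \eqref{high-linear-general} at high frequencies, and the div--curl estimate for the irrotational $u_1$. The only difference is cosmetic: you take $\ell=4$ for the densities and $\ell=5$ for $q_1,q_2$, whereas the paper uses $\ell=5$ throughout, and $N\ge10$ covers both choices.
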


\proof
We split the solution into low-, middle-, and high-frequency parts.

For the low-frequency part, \eqref{low-G-pointwise} and \eqref{IBP-Green} imply, for $m=0,1,2$,
\begin{align}\label{low-density-final}
\|\nabla^m(a_1^\ell,a_2^\ell)(t)\|_{L^2}
\le C(1+t)^{-\frac{m+1}{2}}
\left(\|a_{10}\|_{L^2}+\|a_{20}\|_{\dot H^{-1}}+\|(u_{10},u_{20})\|_{L^2}\right).
\end{align}
Indeed, the slow projector in the first and third rows has order $O(|\xi|^2)$ on $a_{10}$, order $O(1)$ on $a_{20}$, and order $O(1)$ on $q_{10},q_{20}$. The assumption $a_{20}\in\dot H^{-1}$ supplies one low-frequency factor $|\xi|$, while \eqref{IBP-Green} supplies one factor $|\xi|$ for $q_{10}$ and $q_{20}$. Hence the slow part of each density contains at least one power of $|\xi|$.

Similarly, the second and fourth rows of \eqref{low-P4} contain at least $|\xi|^2$ on the last three columns and $|\xi|^4$ on the first column. Therefore, for $m=0,1,2$,
\begin{align}\label{low-q-final}
\|\nabla^m(q_1^\ell,q_2^\ell)(t)\|_{L^2}
\le C(1+t)^{-\frac{m+3}{2}}
\left(\|a_{10}\|_{L^2}+\|a_{20}\|_{\dot H^{-1}}+\|(u_{10},u_{20})\|_{L^2}\right).
\end{align}
The exponentially damped low-frequency modes are absorbed in \eqref{low-density-final}--\eqref{low-q-final}.

For the middle-frequency part, \eqref{middle-decay} gives exponential decay. For the high-frequency part, we choose $\ell=5$ in \eqref{high-linear-general}, which yields the time-integrable decay rate $(1+t)^{-5/4}$ needed in the subsequent nonlinear estimates. Since $N\ge10$, $q_{i0}=\operatorname{div}u_{i0}$, and $0\le m\le2$,
\[
\|U_0\|_{H^{m+5}}
\le C\left(\|a_{10}\|_{H^{N-1}}+\|(u_{10},a_{20},u_{20})\|_{H^N}\right)
\le C\mathcal I_N.
\]
Therefore,
\begin{align}\label{high-final}
\|\nabla^mU^h(t)\|_{L^2}
\le C(1+t)^{-\frac54}\mathcal I_N,
\quad 0\le m\le2.
\end{align}
Combining \eqref{low-density-final}, \eqref{low-q-final}, the middle-frequency exponential estimate, and \eqref{high-final} yields \eqref{linear-density-decay}--\eqref{linear-q-decay}.

Since $5/2>1$, \eqref{linear-q-decay} implies \eqref{linear-q-integrable}.  If $\operatorname{curl}u_{10}=0$, the linearized first velocity remains irrotational, and the div--curl estimate gives
\[
\|\nabla u_1(t)\|_{H^2}\le C\|\operatorname{div}u_1(t)\|_{H^2}
=C\|q_1(t)\|_{H^2},
\]
which, together with \eqref{linear-q-integrable}, gives \eqref{linear-u1-integrable}. This completes the proof.
\endproof

%
%
%
%
%
%

\section{High-order energy and nonlinear decay estimates}\label{S5}
This section combines the nonlinear energy method with the Green-function estimates. We first formulate the Duhamel representation, then derive the high-order energy inequality and estimate the nonlinear source terms in the three frequency regions.

Set
\[
q_1=\operatorname{div}u_1,\quad q_2=\operatorname{div}u_2.
\]
Taking divergence in the two velocity equations of \eqref{main2}, we obtain
\begin{equation}\label{nonlinear-aq}
\left\{
\begin{aligned}
&\partial_t a_1+q_1=g_1,\\
&\partial_t q_1-a_1+\frac1c a_2=g_2,\\
&\partial_t a_2+cq_2=g_3,\\
&\partial_t q_2+c\Delta a_2+a_1-\frac1c a_2+q_2=g_4,
\end{aligned}
\right.
\end{equation}
where the nonlinear terms $g_i(i=1,2,3,4)$ are given by
\begin{align}\label{nonlinear-g}
\begin{aligned}
g_1&=-\operatorname{div}(a_1u_1),\quad g_2=-\mathcal R(a_2)-\operatorname{div}(u_1\cdot\nabla u_1),\quad g_3=-u_2\cdot\nabla a_2-\frac{\gamma-1}{2}a_2q_2,\\ g_4&=\mathcal R(a_2)-\operatorname{div}(u_2\cdot\nabla u_2)
-\frac{\gamma-1}{2}\operatorname{div}(a_2\nabla a_2).
\end{aligned}
\end{align}
Here $\mathcal R(a_2)=O(a_2^2)$ for small $a_2$. Let
\[
U=(a_1,q_1,a_2,q_2)^T,\quad F=(g_1,g_2,g_3,g_4)^T.
\]
Then the system \eqref{nonlinear-aq} can be written in the following vector form:
\[
\partial_tU+\mathcal LU=F,
\]
where $\mathcal L$ is the operator defined in \eqref{lin-matrix}.  Hence
\begin{equation}\label{nonlinear-duhamel}
U(t)=G(t)*U_0+\int_0^tG(t-\tau)*F(\tau)\,d\tau.
\end{equation}

Due to the high-frequency regularity loss, we work at the fixed Sobolev level $N\geq 10$. Define the high-order energy
\[
\mathcal E_N(t) := \|a_1(t)\|_{H^{N-1}}^2 +\|(u_1,a_2,u_2)(t)\|_{H^N}^2 +\left\|a_1(t)-\frac1c a_2(t)\right\|_{\dot H^{-1}}^2 +\left\|a_1(t)-\frac1c a_2(t)\right\|_{H^{N-1}}^2.
\]
Assume temporarily that
\begin{equation}\label{high-bootstrap}
\sup_{0\le\tau\le t}\mathcal E_N(\tau)^{\frac{1}{2}}\le 2\delta
\end{equation}
with $\delta>0$ sufficiently small.

Define the time-weighted functional
\begin{align}\label{M-new}
\begin{aligned}
\mathcal M(t)=\sup_{0\le\tau\le t}\Big\{& (1+\tau)^{\frac{1}{2}}\|(a_1,a_2)(\tau)\|_{L^2} +(1+\tau)\|\nabla(a_1,a_2)(\tau)\|_{H^1}\\
&\quad +(1+\tau)\|(q_1,q_2)(\tau)\|_{H^2} +(1+\tau)^{\frac{5}{4}}\mathcal Z(\tau)\Big\},
\end{aligned}
\end{align}
where
\begin{align}\label{Z-def}
\mathcal Z(t)=
\|(a_1,a_2)(t)\|_{W^{1,\infty}}
+\|(u_1,u_2)(t)\|_{W^{1,\infty}}.
\end{align}

We deduce from the definition of $\mathcal{M}(t)$ that
\begin{align*}
\|(a_1,a_2)(t)\|_{L^2}&\leq (1+t)^{-\frac{1}{2}}\mathcal{M}(t),\quad \|\nabla(a_1,a_2)(t)\|_{H^1} \leq (1+t)^{-1}\mathcal{M}(t),\\
\|(q_1,q_2)(t)\|_{L^2}&\leq (1+t)^{-1}\mathcal{M}(t),\quad \mathcal{Z}(t) \leq (1+t)^{-\frac{5}{4}}\mathcal{M}(t).
\end{align*}
It remains to prove that $\mathcal M(t)$ is bounded uniformly in time.

\begin{lem}\label{lem-high-order-energy}
Let {$N\geq 10$}. Under the assumptions of Theorem \ref{thm-main} and the
smallness assumption \eqref{high-bootstrap}, the smooth solution of
\eqref{main2} satisfies
\begin{align}\label{high-order-tame}
\mathcal E_N(t)+\int_0^t\|u_2(\tau)\|_{H^N}^2\,d\tau
\le C\mathcal E_N(0)
+C\int_0^t\mathcal Z(\tau)\mathcal E_N(\tau)\,d\tau .
\end{align}
If $\int_0^t\mathcal Z(\tau)\,d\tau\le C_0$, then
\begin{align}\label{high-order-bound}
\mathcal E_N(t)\le C\mathcal E_N(0)e^{CC_0}.
\end{align}
\end{lem}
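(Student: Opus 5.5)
We plan to prove \eqref{high-order-tame} with a weighted energy estimate at each derivative level, organized in the variables $(b,u_1,a_2,u_2)$ with $b=a_1-\frac1c a_2$. The point is that the Poisson coupling becomes skew-symmetric in these variables. Subtracting $\frac1c$ times the $a_2$-equation from the $a_1$-equation in \eqref{main2} gives
\[
\partial_t b+\operatorname{div}u_1-\operatorname{div}u_2=f_1-\tfrac1c f_3 .
\]
Write $\nabla\phi:=\nabla(-\Delta)^{-1}b$. In $(u_1,u_2)$ the Poisson forces are $-\nabla\phi$ and $+\nabla\phi$. Pairing them with $u_1,u_2$ gives $\int b\,(\operatorname{div}u_1-\operatorname{div}u_2)\,\phi$-type terms, which equal $-\frac12\frac{d}{dt}\|\nabla\phi\|_{L^2}^2$ up to the nonlinear source. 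This reproduces Proposition \ref{p1} at the linear level.

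\textbf{Derivative levels.} For $1\le k\le N$ we apply $\nabla^k$ to the $u_1,a_2,u_2$ equations and $\nabla^{k-1}$ to the $b$-equation. We take $L^2$ inner products with $\nabla^k u_1$, $\nabla^k a_2$, $\nabla^k u_2$, and with $\nabla^{k-1}b$ (weighted suitably), respectively. The linear pressure terms $c\nabla a_2$ and $c\operatorname{div}u_2$ cancel by integration by parts. At level $k$ the Poisson force gives $\pm\int\nabla^k\nabla\phi\cdot\nabla^k u_i$. After one integration by parts this is $\mp\int\nabla^{k-1}b\,\nabla^{k-1}\Delta(-\Delta)^{-1}\ldots$, i.e.\ $\int \nabla^{k-1} b\,\nabla^{k-1}\operatorname{div}(u_1-u_2)$ with the appropriate sign. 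It therefore cancels exactly against the $b$-equation tested with $\nabla^{k-1}b$. This is the exact top-order cancellation announced in the introduction, and it explains why $b\in H^{N-1}$ suffices. The damping term produces $\|\nabla^k u_2\|_{L^2}^2$ on the left. Summing over $k$ and adding Proposition \ref{p1} gives $\frac{d}{dt}\widetilde{\mathcal E}_N+\|u_2\|_{H^N}^2\le(\text{nonlinear terms})$, where $\widetilde{\mathcal E}_N\sim\mathcal E_N$. To control $\|a_1\|_{H^{N-1}}$ we use $a_1=b+\frac1c a_2$, noting that $a_2\in H^N$.

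\textbf{Nonlinear terms.} Every nonlinear contribution should be bounded by $C\mathcal Z\,\mathcal E_N$. For transport terms $u_i\cdot\nabla(\cdot)$ we commute $\nabla^k$ using Lemma \ref{lem-prelim-commutator} and integrate the principal part by parts, which produces $\|\operatorname{div}u_i\|_{L^\infty}$. The commutators are bounded by $\|\nabla u_i\|_{L^\infty}\|\nabla^{k}\cdot\|_{L^2}+\|\nabla^k u_i\|_{L^2}\|\nabla\cdot\|_{L^\infty}$, and each is $\le\mathcal Z\,\mathcal E_N^{1/2}$.

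The terms $\frac{\gamma-1}{2}a_2\operatorname{div}u_2$ in the $a_2$-equation and $\frac{\gamma-1}{2}a_2\nabla a_2$ in the $u_2$-equation each lose one derivative at level $N$. Their principal parts combine into
\[
\int a_2\bigl(\nabla^N\operatorname{div}u_2\,\nabla^N a_2+\nabla^N\nabla a_2\cdot\nabla^N u_2\bigr)=-\int\nabla a_2\cdot\nabla^N a_2\,\nabla^N u_2 ,
\]
which is bounded by $\|\nabla a_2\|_{L^\infty}\mathcal E_N$. In the $b$-equation these terms appear with one derivative fewer, so they are harmless.

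The term $f_1=-\operatorname{div}(a_1u_1)$ at level $N-1$ for $b$ contains $u_1\cdot\nabla\nabla^{N-1}a_1$. We write $a_1=b+\frac1c a_2$: the $b$-part is handled by integration by parts, and the $a_2$-part is at most order $N$ with $a_2\in H^N$. The remaining piece $a_1\nabla^{N-1}\operatorname{div}u_1$ needs $u_1\in H^N$, which is available. The Poisson remainder $\nabla(-\Delta)^{-1}\mathcal R(a_2)$ is bounded in $H^N$ by $\|a_2\|_{L^\infty}\|a_2\|_{H^{N-1}}$. Its $\dot H^{-1}$-type contribution is handled as in Proposition \ref{p1}. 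We expect the main obstacle to be organizing these top-order cancellations consistently, with no $\|u_1\|_{L^2}$-type factor left without a $\mathcal Z$ in front, since $u_1$ has no dissipation.

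\textbf{Conclusion.} Integrating in time and using $\widetilde{\mathcal E}_N\sim\mathcal E_N$ under \eqref{high-bootstrap} gives \eqref{high-order-tame}. Dropping the dissipation integral and applying Gronwall's inequality then gives \eqref{high-order-bound} with the factor $e^{CC_0}$.
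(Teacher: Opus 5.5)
Your proposal is correct and follows essentially the same route as the paper. The shared steps are the $b$-reformulation with one derivative fewer on $b$, the exact linear cancellations, the top-order cancellation between $a_2\operatorname{div}u_2$ and $a_2\nabla a_2$, commutator and Moser bounds each carrying a factor $\mathcal Z$, Proposition \ref{p1} for the zero-order and $\dot H^{-1}$ part, and Gr\"onwall. The only difference is that you work with $\nabla^k$ where the paper uses $\Lambda^s$: the level-$k$/level-$(k-1)$ Poisson cancellation is exact only if $\nabla^k$ means the full tensor of ordered index tuples (symbol $|\xi|^{2k}$), which your integration by parts implicitly assumes, whereas the paper's $\Lambda^s$ together with the skew-adjointness of the Riesz transforms makes the cancellation automatic.
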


\proof
Set
\[
\alpha=\frac{\gamma-1}{2},\quad \mathscr R=\nabla\Lambda^{-1}.
\]
$\mathscr R$ is the vector of Riesz transforms, and
\begin{align*}
\nabla(-\Delta)^{-1}=\mathscr R\Lambda^{-1},\quad
\operatorname{div}v=\mathscr R\cdot\Lambda v.
\end{align*}
Recall also that
\begin{align*}
\mathcal R(a_2)=\left(1+\frac{\gamma-1}{2c}a_2\right)^{\frac{2}{\gamma-1}}
-1-\frac1c a_2,
\end{align*}
and hence
\begin{align}\label{R-composition-high}
\|\mathcal R(a_2)\|_{H^m}
\le C\|a_2\|_{L^\infty}\|a_2\|_{H^m},
\quad 0\le m\le N-1,
\end{align}
provided $\|a_2\|_{L^\infty}$ is sufficiently small.  This follows from
$\mathcal R(0)=\mathcal R'(0)=0$ and the Moser composition estimate.

We first derive an equation for $b$.  From the first and third equations in
\eqref{main2},
\begin{align*}
\partial_ta_1+\operatorname{div}u_1
&=-u_1\cdot\nabla a_1-a_1\operatorname{div}u_1,\\
\partial_ta_2+c\operatorname{div}u_2
&=-u_2\cdot\nabla a_2-\alpha a_2\operatorname{div}u_2.
\end{align*}
Using $a_1=b+c^{-1}a_2$, we obtain
\begin{equation}\label{high-b-equation}
\partial_tb+u_1\cdot\nabla b+\operatorname{div}(u_1-u_2)=H_b,
\end{equation}
where
\begin{align}\label{high-Hb}
H_b=&-\frac1c(u_1-u_2)\cdot\nabla a_2
-b\operatorname{div}u_1-\frac1c a_2\operatorname{div}u_1
+\frac{\alpha}{c}a_2\operatorname{div}u_2.
\end{align}
The remaining three equations can be written as
\begin{equation}\label{high-rewritten-system}
\left\{
\begin{aligned}
&\partial_tu_1+\mathscr R\Lambda^{-1}b+u_1\cdot\nabla u_1
=\mathscr R\Lambda^{-1}\mathcal R(a_2),\\
&\partial_ta_2+c\operatorname{div}u_2+u_2\cdot\nabla a_2
+\alpha a_2\operatorname{div}u_2=0,\\
&\partial_tu_2+c\nabla a_2-\mathscr R\Lambda^{-1}b+u_2
+u_2\cdot\nabla u_2+\alpha a_2\nabla a_2
=-\mathscr R\Lambda^{-1}\mathcal R(a_2).
\end{aligned}
\right.
\end{equation}

By Lemma \ref{lem-prelim-commutator}, together with the standard Sobolev
product inequality, for every integer $m\ge1$ we have
\begin{align}\label{high-commutator}
\|[\Lambda^m,f]g\|_{L^2}
&\le C\big(\|\nabla f\|_{L^\infty}\|\Lambda^{m-1}g\|_{L^2}
+\|\Lambda^mf\|_{L^2}\|g\|_{L^\infty}\big),\\
\label{high-product}
\|fg\|_{H^m}
&\le C\big(\|f\|_{L^\infty}\|g\|_{H^m}
+\|g\|_{L^\infty}\|f\|_{H^m}\big).
\end{align}
For $m=0$ we use the ordinary $L^2$ product estimate. Since
$N\ge10>5/2+1$, Sobolev embedding and \eqref{high-bootstrap} imply
\[
\mathcal Z(t)\le C\mathcal E_N(t)^{\frac{1}{2}}\le C\delta.
\]
In what follows, all the $L^\infty$ and $W^{1,\infty}$ norms are
controlled directly by $\mathcal Z(t)$ defined in \eqref{Z-def}.

Fix $1\le s\le N$.  Apply $\Lambda^s$ to the three equations in
\eqref{high-rewritten-system}, apply $\Lambda^{s-1}$ to \eqref{high-b-equation}, and take
the $L^2$ inner products with $\Lambda^su_1$, $\Lambda^sa_2$,
$\Lambda^su_2$, and $\Lambda^{s-1}b$, respectively.  The two linear coupling
mechanisms cancel exactly.  Indeed,
\[
c\langle \Lambda^s\operatorname{div}u_2,\Lambda^sa_2\rangle +c\langle \Lambda^s\nabla a_2,\Lambda^su_2\rangle=0,
\]
and, since each Riesz transform is skew-adjoint on $L^2$,
\[
 \langle \mathscr R\Lambda^{s-1}b,\Lambda^s(u_1-u_2)\rangle
+\langle \mathscr R\cdot\Lambda^s(u_1-u_2),\Lambda^{s-1}b\rangle=0.
\]
Therefore,
\begin{align}\label{high-level-energy-identity}
&\frac12\frac{d}{dt}\Big(
\|\Lambda^su_1\|_{L^2}^2+\|\Lambda^sa_2\|_{L^2}^2
+\|\Lambda^su_2\|_{L^2}^2+\|\Lambda^{s-1}b\|_{L^2}^2\Big)
+\|\Lambda^su_2\|_{L^2}^2\notag\\
&\quad =I_{1,s}+I_{2,s}+I_{3,s}+I_{4,s},
\end{align}
where 
\begin{align*}
I_{1,s}:={}&-\big\langle\Lambda^s(u_1\cdot\nabla u_1),\Lambda^su_1\big\rangle
-\big\langle\Lambda^s(u_2\cdot\nabla a_2),\Lambda^sa_2\big\rangle\\
&-\big\langle\Lambda^s(u_2\cdot\nabla u_2),\Lambda^su_2\big\rangle
-\big\langle\Lambda^{s-1}(u_1\cdot\nabla b),\Lambda^{s-1}b\big\rangle,\\
I_{2,s}:={}&-\alpha\big\langle\Lambda^s(a_2\operatorname{div}u_2),\Lambda^sa_2\big\rangle
-\alpha\big\langle\Lambda^s(a_2\nabla a_2),\Lambda^su_2\big\rangle,\\
I_{3,s}:={}&\big\langle\mathscr R\Lambda^{s-1}\mathcal R(a_2),
\Lambda^s(u_1-u_2)\big\rangle,\quad I_{4,s}:=\big\langle\Lambda^{s-1}H_b,\Lambda^{s-1}b\big\rangle.
\end{align*}
We estimate these four terms separately.

\medskip
\noindent\emph{Step 1: the estimate of $I_{1,s}$.}
For $j=1,2$, write
\begin{align*}
\Lambda^s(u_j\cdot\nabla u_j)
=u_j\cdot\nabla\Lambda^su_j+[\Lambda^s,u_j]\cdot\nabla u_j.
\end{align*}
After integration by parts and using \eqref{high-commutator},
\begin{align}\label{high-u-transport}
\left|\langle\Lambda^s(u_j\cdot\nabla u_j),\Lambda^su_j\rangle\right|
\le C\|\nabla u_j\|_{L^\infty}\|u_j\|_{H^s}^2.
\end{align}
For the transport term in the $a_2$ equation,
\begin{align*}
 \left|\langle\Lambda^s(u_2\cdot\nabla a_2),\Lambda^sa_2\rangle\right| 
& \le \frac12\|\operatorname{div}u_2\|_{L^\infty}
\|\Lambda^sa_2\|_{L^2}^2
+\|[\Lambda^s,u_2]\cdot\nabla a_2\|_{L^2}\|\Lambda^sa_2\|_{L^2}\\
& \le C\big(\|\nabla u_2\|_{L^\infty}\|a_2\|_{H^s}^2
+\|\nabla a_2\|_{L^\infty}\|u_2\|_{H^s}\|a_2\|_{H^s}\big).
\end{align*}
Finally, put $r=s-1$.  If $r=0$, direct integration by parts gives
\begin{align*}
\left|\langle u_1\cdot\nabla b,b\rangle\right|
\le \frac12\|\operatorname{div}u_1\|_{L^\infty}\|b\|_{L^2}^2.
\end{align*}
If $r\ge1$, then
\begin{align*}
\Lambda^r(u_1\cdot\nabla b)
=u_1\cdot\nabla\Lambda^rb+[\Lambda^r,u_1]\cdot\nabla b,
\end{align*}
and hence
\begin{align}\label{high-b-transport}
\left|\langle\Lambda^{s-1}(u_1\cdot\nabla b),\Lambda^{s-1}b\rangle\right|
\le C\|\nabla u_1\|_{L^\infty}\|b\|_{H^{s-1}}^2
+C\|\nabla b\|_{L^\infty}\|u_1\|_{H^{s-1}}\|b\|_{H^{s-1}}.
\end{align}
Since $b=a_1-c^{-1}a_2$, all coefficients in
\eqref{high-u-transport}--\eqref{high-b-transport} are bounded by $C\mathcal Z(t)$.
Hence
\begin{align}\label{high-I1}
|I_{1,s}|\le C\mathcal Z(t)\Big(
\|u_1\|_{H^s}^2+\|a_2\|_{H^s}^2+\|u_2\|_{H^s}^2
+\|b\|_{H^{s-1}}^2\Big).
\end{align}

\medskip
\noindent\emph{Step 2: the estimate of $I_{2,s}$.} Note that
\begin{align*}
I_{2,s}=- \alpha\langle\Lambda^s(a_2\operatorname{div}u_2),\Lambda^sa_2\rangle
-\alpha\langle\Lambda^s(a_2\nabla a_2),\Lambda^su_2\rangle.
\end{align*}
We decompose
\begin{align*}
\Lambda^s(a_2\operatorname{div}u_2) =a_2\Lambda^s\operatorname{div}u_2
+[\Lambda^s,a_2]\operatorname{div}u_2,\quad \Lambda^s(a_2\nabla a_2) =a_2\nabla\Lambda^sa_2+[\Lambda^s,a_2]\nabla a_2.
\end{align*}
The two potentially derivative-losing pieces cancel after one integration by parts:
\begin{align}\label{high-a2-u2-cancel}
 -\int_{\mathbb R^3}a_2\Lambda^s\operatorname{div}u_2\,\Lambda^sa_2\,dx
-\int_{\mathbb R^3}a_2\nabla\Lambda^sa_2\cdot\Lambda^su_2\,dx =\int_{\mathbb R^3}\nabla a_2\cdot\Lambda^su_2\,\Lambda^sa_2\,dx.
\end{align}
The commutator terms satisfy, by \eqref{high-commutator},
\begin{align*}
\|[\Lambda^s,a_2]\operatorname{div}u_2\|_{L^2}
&\le C\big(\|\nabla a_2\|_{L^\infty}\|u_2\|_{H^s}
+\|\nabla u_2\|_{L^\infty}\|a_2\|_{H^s}\big),\\
\|[\Lambda^s,a_2]\nabla a_2\|_{L^2}
&\le C\|\nabla a_2\|_{L^\infty}\|a_2\|_{H^s}.
\end{align*}
Combining these estimates with \eqref{high-a2-u2-cancel}, we get
\begin{align}\label{high-I2}
|I_{2,s}|\le C\mathcal Z(t)
\big(\|a_2\|_{H^s}^2+\|u_2\|_{H^s}^2\big).
\end{align}

\medskip
\noindent\emph{Step 3: the estimate of $I_{3,s}$.}
For $I_{3,s}$, since the Riesz transforms are bounded on $L^2$,
\eqref{R-composition-high} gives
\begin{align}\label{I3s}
\begin{aligned}
|I_{3,s}|
&\le C\|\mathcal R(a_2)\|_{H^{s-1}} \big(\|u_1\|_{H^s}+\|u_2\|_{H^s}\big)\\
&\le C\|a_2\|_{L^\infty}\|a_2\|_{H^{s-1}} \big(\|u_1\|_{H^s}+\|u_2\|_{H^s}\big)\\
&\le C\mathcal Z(t) \big(\|a_2\|_{H^s}^2+\|u_1\|_{H^s}^2+\|u_2\|_{H^s}^2\big).
\end{aligned}
\end{align}
In particular, the operator $\nabla(-\Delta)^{-1}$ gains one derivative, and hence
this term only requires $\mathcal R(a_2)\in H^{s-1}$.

\medskip
\noindent\emph{Step 4: the estimate of $I_{4,s}$.}
Using \eqref{high-product} and \eqref{high-Hb}, with $r=s-1$, we obtain
\[
\|H_b\|_{H^{s-1}} \le C\mathcal Z(t)\Big( \|u_1\|_{H^s}+\|u_2\|_{H^s}+\|a_2\|_{H^s}+\|b\|_{H^{s-1}} \Big).
\]
For example,
\begin{align*}
\|(u_1-u_2)\cdot\nabla a_2\|_{H^{s-1}}
&\le C\big(\|u_1-u_2\|_{L^\infty}\|a_2\|_{H^s}
+\|\nabla a_2\|_{L^\infty}\|u_1-u_2\|_{H^{s-1}}\big),\\
\|b\operatorname{div}u_1\|_{H^{s-1}}
&\le C\big(\|b\|_{L^\infty}\|u_1\|_{H^s}
+\|\nabla u_1\|_{L^\infty}\|b\|_{H^{s-1}}\big),
\end{align*}
and the terms $a_2\operatorname{div}u_j$ are treated in exactly the same way.
Hence
\begin{align}\label{high-I4}
|I_{4,s}|
&\le C\mathcal Z(t)\Big(
\|u_1\|_{H^s}^2+\|u_2\|_{H^s}^2+\|a_2\|_{H^s}^2
+\|b\|_{H^{s-1}}^2\Big).
\end{align}

Combining \eqref{high-I1}, \eqref{high-I2}, \eqref{I3s}, and
\eqref{high-I4} with \eqref{high-level-energy-identity}, we obtain, for every
$1\le s\le N$,
\begin{align}\label{high-s-est}
\begin{aligned}
&\frac{d}{dt}\Big( \|\Lambda^su_1\|_{L^2}^2+\|\Lambda^sa_2\|_{L^2}^2 +\|\Lambda^su_2\|_{L^2}^2+\|\Lambda^{s-1}b\|_{L^2}^2\Big) +\|\Lambda^su_2\|_{L^2}^2\\
&\quad\le C\mathcal Z(t)\Big( \|u_1\|_{H^s}^2+\|a_2\|_{H^s}^2+\|u_2\|_{H^s}^2 +\|b\|_{H^{s-1}}^2\Big).
\end{aligned}
\end{align}
Summing \eqref{high-s-est} over $s=1,\ldots,N$ yields
\begin{align}\label{high-summed-est}
\mathfrak E_N^{\rm h}(t)
+\int_0^t\|\nabla u_2(\tau)\|_{H^{N-1}}^2\,d\tau\le \mathfrak E_N^{\rm h}(0)
+C\int_0^t\mathcal Z(\tau)\mathcal E_N(\tau)\,d\tau,
\end{align}
where
\begin{align*}
\mathfrak E_N^{\rm h}(t)
=\sum_{s=1}^N\Big(
\|\Lambda^su_1\|_{L^2}^2+\|\Lambda^sa_2\|_{L^2}^2
+\|\Lambda^su_2\|_{L^2}^2+\|\Lambda^{s-1}b\|_{L^2}^2\Big).
\end{align*}

By \eqref{high-bootstrap}, the smallness assumption \eqref{small-neighborhood}
is satisfied for $\delta>0$ sufficiently small, and hence
Proposition \ref{p1} applies. Combining Proposition \ref{p1} with
\eqref{high-summed-est}, and using
\[
\|\nabla(-\Delta)^{-1}b\|_{L^2}=\|b\|_{\dot H^{-1}},\quad
\sum_{s=1}^N\|\Lambda^{s-1}b\|_{L^2}^2\sim\|b\|_{H^{N-1}}^2,
\]
together with $a_1=b+c^{-1}a_2$ and
\[
\|a_1\|_{H^{N-1}}
\le C\big(\|b\|_{H^{N-1}}+\|a_2\|_{H^{N-1}}\big),
\]
we obtain \eqref{high-order-tame}. Finally, Gr\"onwall's inequality gives
\[
\mathcal E_N(t)
\le C\mathcal E_N(0)
\exp\left(C\int_0^t\mathcal Z(\tau)\,d\tau\right),
\]
and hence \eqref{high-order-bound} whenever
$\int_0^t\mathcal Z(\tau)\,d\tau\le C_0$.
\endproof

We next estimate the nonlinear terms in the Duhamel formula.

\begin{lem}\label{lem-nonlinear-source}
Assume \eqref{high-bootstrap}, $\varepsilon_N\le1$, and
$\mathcal M(t)<\infty$. Then, for
$0\le\tau\le t$,
\begin{align}\label{source-L1}
\|F(\tau)\|_{L^1}
&\le C\big(\delta\mathcal M(t)+\mathcal M(t)^2\big)(1+\tau)^{-1}
+C\varepsilon_Ne^{-\frac{\tau}{2}},\\
\label{source-L2}
\|F(\tau)\|_{L^2}
&\le C\big(\delta\mathcal M(t)+\mathcal M(t)^2\big)(1+\tau)^{-\frac{3}{2}},\\
\label{source-H8}
\|F(\tau)\|_{H^8}
&\le C\delta\mathcal M(t)(1+\tau)^{-\frac{5}{4}}.
\end{align}
In addition, with
\[
F_2^{\rm div}:=u_1\cdot\nabla u_1,\quad
F_4^{\rm div}:=u_2\cdot\nabla u_2+\frac{\gamma-1}{2}a_2\nabla a_2,
\]
one has
\begin{align}\label{vector-source-L1L2}
\begin{aligned}
\|F_2^{\rm div}(\tau)\|_{L^1}+\|F_4^{\rm div}(\tau)\|_{L^1} &\le C\big(\delta\mathcal M(t)+\mathcal M(t)^2\big)(1+\tau)^{-1} +C\varepsilon_Ne^{-\frac{\tau}{2}},\\
\|F_2^{\rm div}(\tau)\|_{L^2}+\|F_4^{\rm div}(\tau)\|_{L^2} &\le C\big(\delta\mathcal M(t)+\mathcal M(t)^2\big)(1+\tau)^{-\frac{3}{2}}.
\end{aligned}
\end{align}
\end{lem}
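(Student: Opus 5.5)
The estimates reduce to bilinear bounds on the explicit quadratic sources \eqref{nonlinear-g}. In every product I put the factor carrying the most decay into the norm dictated by $\mathcal M(t)$, and the remaining factor into the energy bound $\mathcal E_N^{1/2}\le2\delta$ from \eqref{high-bootstrap}. I first expand the divergence forms:
\[
g_1=-u_1\cdot\nabla a_1-a_1q_1,\qquad \operatorname{div}(u_j\cdot\nabla u_j)=u_j\cdot\nabla q_j+\nabla u_j:(\nabla u_j)^{T},
\]
\[
\operatorname{div}(a_2\nabla a_2)=|\nabla a_2|^2+a_2\Delta a_2 .
\]
The Taylor term satisfies $|\mathcal R(a_2)|\le C|a_2|^2$.

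The velocity gradients are handled as follows.
\begin{itemize}
\item For $u_1$, Lemma \ref{lem-irrotational} and the div--curl estimate give $\|\nabla u_1\|_{H^k}\le C\|q_1\|_{H^k}$, so $\nabla u_1$ inherits the rate $(1+\tau)^{-1}$ from $\mathcal M$.
\item For $u_2$, I also need $\omega_2=\operatorname{curl}u_2$. Taking the curl of the $u_2$ equation in \eqref{main2} gives $\partial_t\omega_2+\omega_2=-\operatorname{curl}(u_2\cdot\nabla u_2)$, since the gradient terms drop out.
\item An $H^{k}$ energy estimate for this equation, with $k\le N-2$ and the commutator bounds of Lemma \ref{lem-prelim-commutator}, gives $\frac{d}{dt}\|\omega_2\|_{H^k}^2+(2-C\delta)\|\omega_2\|_{H^k}^2\le0$. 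Hence $\|\omega_2(\tau)\|_{H^{k}}\le C\varepsilon_Ne^{-\tau/2}$ for $\delta$ small.
\item Consequently $\|\nabla u_2\|_{H^k}\le C\big(\mathcal M(t)(1+\tau)^{-1}+\varepsilon_Ne^{-\tau/2}\big)$. This is the source of the term $\varepsilon_Ne^{-\tau/2}$ in \eqref{source-L1}.
\end{itemize}

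\emph{The $L^1$ bound \eqref{source-L1}.} Use H\"older $\|fg\|_{L^1}\le\|f\|_{L^2}\|g\|_{L^2}$.
\begin{itemize}
\item Terms with an undifferentiated velocity: $\|u_j\|_{L^2}\le C\delta$ does not decay, so it is paired with a differentiated factor. This gives $\|u_1\cdot\nabla a_1\|_{L^1}\le C\delta\mathcal M(1+\tau)^{-1}$, and the same for $u_2\cdot\nabla a_2$ and $u_j\cdot\nabla q_j$. These terms fix the rate $(1+\tau)^{-1}$.
\item Density products: $\|a_1q_1\|_{L^1}$, $\|\mathcal R(a_2)\|_{L^1}\le C\|a_2\|_{L^2}^2$, $\|a_2q_2\|_{L^1}$ and $\|a_2\Delta a_2\|_{L^1}$ are all at most $C\mathcal M^2(1+\tau)^{-1}$.
\item The quadratic gradient terms $\nabla u_j:(\nabla u_j)^T$ are bounded using the decomposition above. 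The cross term gives $\varepsilon_N e^{-\tau/2}$ with a factor $\mathcal M$ or $\varepsilon_N$ absorbed, using $\varepsilon_N\le1$.
\end{itemize}

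\emph{The $L^2$ bound \eqref{source-L2}.} Place one factor in $L^\infty$, controlled by $\mathcal Z\le\mathcal M(1+\tau)^{-5/4}$, and the other in $L^2$ with at least the rate $(1+\tau)^{-1/2}$. For example, $\|\mathcal R(a_2)\|_{L^2}\le\|a_2\|_{L^\infty}\|a_2\|_{L^2}\le\mathcal M^2(1+\tau)^{-7/4}$. Likewise $\|u_2\cdot\nabla q_2\|_{L^2}\le\mathcal Z\|\nabla q_2\|_{L^2}$. The $\omega_2$ contributions are bounded by $\mathcal Z\,\varepsilon_Ne^{-\tau/2}\le\mathcal M(1+\tau)^{-3/2}$ after absorbing $\varepsilon_N$ into $\delta$. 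Every term then decays at least like $(1+\tau)^{-7/4}$, which suffices. The bound \eqref{vector-source-L1L2} for $F_2^{\rm div},F_4^{\rm div}$ is the same computation without the outer divergence, so it is easier.

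\emph{The $H^8$ bound \eqref{source-H8}.} Apply the tame product estimate \eqref{high-product}, after writing each $g_i$ as a derivative of a product or a product of derivatives:
\[
\|fg\|_{H^9}\le C\big(\|f\|_{L^\infty}\|g\|_{H^9}+\|g\|_{L^\infty}\|f\|_{H^9}\big).
\]
The regularity count works because $N\ge10$:
\begin{itemize}
\item $g_1=-\operatorname{div}(a_1u_1)$ needs $a_1\in H^9\subset H^{N-1}$;
\item $\operatorname{div}(u_j\cdot\nabla u_j)$ needs $u_j\in H^{10}$;
\item $\operatorname{div}(a_2\nabla a_2)$ needs $a_2\in H^{10}$;
\item $\mathcal R(a_2)$ is handled by \eqref{R-composition-high}.
\end{itemize}
Each $L^\infty$ factor is $a_j$, $u_j$ or a first derivative of them, hence bounded by $\mathcal Z$, and each Sobolev factor is bounded by $C\delta$. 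This gives $C\delta\mathcal Z\le C\delta\mathcal M(1+\tau)^{-5/4}$.

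\emph{Main obstacle.} The main difficulty is the non-decay of $\|u_j\|_{L^2}$ and the transverse part of $\nabla u_2$ in the low-order $L^1$ and $L^2$ bounds. I expect the damped vorticity estimate for $\omega_2$ to be the step needing the most care: its nonlinear commutator must close at order $k\le N-2$ using only $\delta$-smallness. Bounding the undifferentiated velocities by $\delta$ avoids the first issue, and the vorticity estimate handles the second.
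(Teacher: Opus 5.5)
Your proposal is correct and follows essentially the same route as the paper: div--curl control of $\nabla u_1$ via irrotationality, an $H^2$ energy estimate for the damped vorticity equation giving $\|\omega_2\|_{H^2}\lesssim \varepsilon_N e^{-\tau/2}$ (the source of the exponential term), H\"older pairings $L^2\times L^2$ for the $L^1$ bounds and $L^\infty\times L^2$ with $\mathcal Z$ for the $L^2$ bounds, and Moser tame estimates using $N\ge 10$ for \eqref{source-H8}. Two points should be made explicit. First, the $\omega_2$-contribution in \eqref{source-L2} is absorbed into $\delta\mathcal M(t)$ because $\|\omega_{20}\|_{H^2}\le C\mathcal E_N(0)^{1/2}\le C\delta$; this is what the paper's ``$\min\{\varepsilon_N,\delta\}\le\delta$'' refers to. Second, the vorticity energy estimate needs the rewriting $\operatorname{curl}((u\cdot\nabla)u)=(u\cdot\nabla)\omega+(\operatorname{div}u)\omega-(\omega\cdot\nabla)u$, so that the nonlinearity is linear in $\omega_2$ and can be bounded by $C\delta\|\omega_2\|_{H^2}^2$.
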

\begin{proof} We first record the div--curl bounds for the two velocities.  By Lemma \ref{lem-irrotational},
$\operatorname{curl}u_1=0$ for all times.  Hence
\begin{align}\label{u1-from-q1}
\|\nabla u_1(\tau)\|_{H^2}
\le C\|q_1(\tau)\|_{H^2}
\le C\mathcal M(t)(1+\tau)^{-1}.
\end{align}
For the second velocity, set $\omega_2=\operatorname{curl}u_2$.  Taking curl of the fourth equation in \eqref{main2} gives
\begin{align}\label{omega2-eq}
\partial_t\omega_2+\omega_2
+(u_2\cdot\nabla)\omega_2
+(\operatorname{div}u_2)\omega_2
-(\omega_2\cdot\nabla)u_2=0,
\end{align}
since the curl of every gradient term, including $a_2\nabla a_2=\frac12\nabla(a_2^2)$, vanishes.  Applying derivatives up to order two to \eqref{omega2-eq}, using the transport commutator estimate, and taking the $L^2$ inner product with the corresponding derivatives of $\omega_2$, we obtain
\begin{align*}
\frac12\frac{d}{dt}\|\omega_2\|_{H^2}^2+\|\omega_2\|_{H^2}^2
\le C\|\nabla u_2\|_{H^2}\|\omega_2\|_{H^2}^2.
\end{align*}

The high-order bootstrap implies $C\|\nabla u_2\|_{H^2}\le C_1\delta$.
Choosing $\delta$ so that $C_1\delta\le\frac12$, we obtain
\[
\frac{d}{d\tau}\|\omega_2(\tau)\|_{H^2}^2
+\|\omega_2(\tau)\|_{H^2}^2\le0.
\]
Consequently, 
\begin{align}\label{omega2-decay}
\|\omega_2(\tau)\|_{H^2}
\le Ce^{-\frac{\tau}{2}}\|\omega_{20}\|_{H^2}
\le  Ce^{-\frac{\tau}{2}}\|u_{20}\|_{H^3}\le Ce^{-\frac{\tau}{2}}\varepsilon_N.
\end{align}
The div--curl estimate then gives
\begin{align}\label{u2-from-q2}
\|\nabla u_2(\tau)\|_{H^2}
\le C\big(\|q_2(\tau)\|_{H^2}+\|\omega_2(\tau)\|_{H^2}\big)
\le C\mathcal M(t)(1+\tau)^{-1}
+Ce^{-\frac{\tau}{2}}\varepsilon_N.
\end{align}

We also use Proposition \ref{p1} and the high-order bootstrap in the form
\begin{align}\label{uniform-L2-source}
\|(u_1,u_2,a_2)(\tau)\|_{L^2}
+\|(a_1,u_1,a_2,u_2)(\tau)\|_{H^9}
\le C\delta.
\end{align}

We next prove the $L^1$ estimates.  Since
$g_1=-a_1q_1-u_1\cdot\nabla a_1$, we have
\begin{align*}
\|g_1\|_{L^1} \le \|a_1\|_{L^2}\|q_1\|_{L^2} +\|u_1\|_{L^2}\|\nabla a_1\|_{L^2} \le C\big(\mathcal M(t)^2+\delta\mathcal M(t)\big)(1+\tau)^{-1}.
\end{align*}
For $g_2$, use
\begin{align}\label{div-convection-id}
\operatorname{div}(u\cdot\nabla u)
=u\cdot\nabla\operatorname{div}u
+\sum_{i,j=1}^3\partial_i u_j\,\partial_j u_i.
\end{align}
Together with $\|\mathcal R(a_2)\|_{L^1}\le C\|a_2\|_{L^2}^2$, this gives
\begin{align*}
\|g_2\|_{L^1} \le C\|a_2\|_{L^2}^2 +C\|u_1\|_{L^2}\|\nabla q_1\|_{L^2} +C\|\nabla u_1\|_{L^2}^2 \le C\big(\delta\mathcal M(t)+\mathcal M(t)^2\big)(1+\tau)^{-1}.
\end{align*}
Similarly,
\begin{align*}
\|g_3\|_{L^1} \le \|u_2\|_{L^2}\|\nabla a_2\|_{L^2} +C\|a_2\|_{L^2}\|q_2\|_{L^2} \le C\big(\delta\mathcal M(t)+\mathcal M(t)^2\big)(1+\tau)^{-1}.
\end{align*}
Finally, using \eqref{div-convection-id} and
$\operatorname{div}(a_2\nabla a_2)=|\nabla a_2|^2+a_2\Delta a_2$, we obtain
\begin{align*}
\|g_4\|_{L^1} &\le C\|a_2\|_{L^2}^2 +C\|u_2\|_{L^2}\|\nabla q_2\|_{L^2} +C\|\nabla u_2\|_{L^2}^2 +C\|\nabla a_2\|_{L^2}^2 +C\|a_2\|_{L^2}\|\Delta a_2\|_{L^2}\\
&\le C\big(\delta\mathcal M(t)+\mathcal M(t)^2\big)(1+\tau)^{-1}
+C\varepsilon_Ne^{-\frac{\tau}{2}}.
\end{align*}
Indeed, \eqref{u2-from-q2} and $\varepsilon_N\le1$ give
\[
\|\nabla u_2\|_{L^2}^2\le C\mathcal M(t)^2(1+\tau)^{-2}+C\varepsilon_N^2e^{-\tau}\le   C\mathcal M(t)^2(1+\tau)^{-2}+C\varepsilon_Ne^{-\tau}.
\]
 Summing these estimates gives \eqref{source-L1}.

For the $L^2$ bound, we use the $W^{1,\infty}$ component of $\mathcal M(t)$.  The corresponding estimates are
\begin{align*}
\|g_1\|_{L^2}
&\le C\big(\|a_1\|_{L^\infty}\|q_1\|_{L^2}
+\|u_1\|_{L^\infty}\|\nabla a_1\|_{L^2}\big),\\
\|g_2\|_{L^2}
&\le C\|a_2\|_{L^\infty}\|a_2\|_{L^2}
+C\|u_1\|_{L^\infty}\|\nabla q_1\|_{L^2}
+C\|\nabla u_1\|_{L^\infty}\|\nabla u_1\|_{L^2}.
\end{align*}
The corresponding estimates for $g_3$ and $g_4$ follow from the same identities, with \eqref{u2-from-q2} and $\min\{\varepsilon_N,\delta\}\le\delta$ used for the transverse part of $u_2$. Since $\mathcal Z(\tau)\le \mathcal M(t)(1+\tau)^{-\frac{5}{4}}$, while the $L^2$ density decays like $(1+\tau)^{-1/2}$ and the differentiated quantities in \eqref{M-new} decay like $(1+\tau)^{-1}$, each product is bounded by $C(\delta\mathcal M(t)+\mathcal M(t)^2)(1+\tau)^{-3/2}$. The contribution generated by the transverse part of $u_2$ is bounded directly by
\[
C\delta\mathcal M(t)e^{-\frac{\tau}{2}}(1+\tau)^{-\frac{5}{4}}
\le C\delta\mathcal M(t)(1+\tau)^{-\frac{3}{2}},
\]
since the exponential factor dominates every algebraic power.  Hence  we obtain  \eqref{source-L2}.

The vector sources in \eqref{vector-source-L1L2} are estimated similarly.  Indeed,
\begin{align*}
\|u_j\cdot\nabla u_j\|_{L^1} \le \|u_j\|_{L^2}\|\nabla u_j\|_{L^2},\quad \|a_2\nabla a_2\|_{L^1} \le \|a_2\|_{L^2}\|\nabla a_2\|_{L^2},
\end{align*}
and the right-hand side is bounded by
\[
C(\delta\mathcal M(t)+\mathcal M(t)^2)(1+\tau)^{-1}
+C\varepsilon_Ne^{-\frac{\tau}{2}}
\]
using \eqref{u1-from-q1}, \eqref{u2-from-q2}, the uniform $L^2$ bound
\eqref{uniform-L2-source}, and the definition of $\mathcal M(t)$.

Similarly,
\[
\|u_j\cdot\nabla u_j\|_{L^2} \le \|u_j\|_{L^\infty}\|\nabla u_j\|_{L^2}, \quad \|a_2\nabla a_2\|_{L^2} \le \|a_2\|_{L^\infty}\|\nabla a_2\|_{L^2},
\]
which gives the second estimate in \eqref{vector-source-L1L2}. 

For the high derivatives, the Moser product estimate, \eqref{high-bootstrap}, and the definition of $\mathcal Z$ give
\begin{align*}
\|g_1\|_{H^8} &\le C\big(\|a_1\|_{W^{1,\infty}}\|u_1\|_{H^9} +\|u_1\|_{W^{1,\infty}}\|a_1\|_{H^9}\big),\\
\|g_2\|_{H^8} &\le C\|a_2\|_{L^\infty}\|a_2\|_{H^8} +C\|u_1\|_{W^{1,\infty}}\|u_1\|_{H^{10}},\\
\|g_3\|_{H^8} &\le C\big(\|u_2\|_{W^{1,\infty}}\|a_2\|_{H^9} +\|a_2\|_{W^{1,\infty}}\|u_2\|_{H^9}\big),\\
\|g_4\|_{H^8} &\le C\|a_2\|_{L^\infty}\|a_2\|_{H^8} +C\|u_2\|_{W^{1,\infty}}\|u_2\|_{H^{10}} +C\|a_2\|_{W^{1,\infty}}\|a_2\|_{H^{10}}.
\end{align*}
All high Sobolev norms on the right are bounded by $C\delta$ due to {$N\geq 10$}.  Hence
\[
\|F(\tau)\|_{H^8} \le C\delta\mathcal Z(\tau) \le C\delta\mathcal M(t)(1+\tau)^{-\frac{5}{4}},
\]
which is \eqref{source-H8}.
\end{proof}

We now combine Lemma \ref{lem-nonlinear-source} with the Green-function bounds in Lemma \ref{lem-green-estimates}. Let $U^\ell,U^m,U^h$ denote the low-, middle-, and high-frequency parts of the Duhamel formula corresponding to the regions introduced in Section \ref{S4}.

\begin{lem}\label{lem-nonlinear-decay}
Under \eqref{high-bootstrap}, one has
\begin{align*}
&(1+t)^{\frac{1}{2}}\|(a_1,a_2)(t)\|_{L^2} +(1+t)\|\nabla(a_1,a_2)(t)\|_{H^1} +(1+t)\|(q_1,q_2)(t)\|_{H^2} +(1+t)^{\frac{5}{4}}\mathcal Z(t)\\
&\quad \le C\varepsilon_N+C\delta\mathcal M(t)+C\mathcal M(t)^2,
\end{align*}
where $\varepsilon_N$ is the initial norm in \eqref{main-smallness}.
\end{lem}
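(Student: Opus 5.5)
We start from the Duhamel representation \eqref{nonlinear-duhamel} and split each term into low, middle and high frequency parts, $U=U^\ell+U^m+U^h$. The linear part $G(t)*U_0$ is handled by Proposition \ref{prop-linear-decay}, which gives the $L^2$, $\dot H^1\cap\dot H^2$ and $q$-decay with constant $C\mathcal I_N\le C\varepsilon_N$. The $W^{1,\infty}$ rate $(1+t)^{-5/4}$ of the linear part needs a separate argument, because $\mathcal Z$ contains $u_1,u_2$ and not only $q_1,q_2$.

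\emph{Linear $W^{1,\infty}$ bound.} At low frequencies we use \eqref{low-kernel-L2inf} together with the extra factors $|\xi|$ from $a_{20}\in\dot H^{-1}$ and from \eqref{IBP-Green}. For $u_j$ we recover the velocity from $q_j$ by $u_1=\nabla\Delta^{-1}q_1$ (irrotational by Lemma \ref{lem-irrotational}). This costs one power of $|\xi|$, which the low-frequency structure \eqref{low-P4} of the $q$-rows compensates: there is at least $|\xi|^2$ on the last three columns. The resulting rate is at least $(1+t)^{-3/4-1}\le(1+t)^{-5/4}$ in $L^\infty$. The middle part decays exponentially. At high frequencies we use \eqref{high-linear-general} with $\ell=5$ and Sobolev embedding $H^2\hookrightarrow L^\infty$, which consumes $2+5+1$ derivatives; this is admissible since $N\ge10$. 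The curl part of $u_2$ decays exponentially, as in \eqref{omega2-decay}.

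\emph{Nonlinear Duhamel term.} We bound $\int_0^tG(t-\tau)*F(\tau)\,d\tau$ in the three regions using Lemma \ref{lem-nonlinear-source}.
\begin{itemize}
\item At low frequencies we use the $L^1$–$L^2$ and $L^1$–$L^\infty$ kernel bounds of Lemma \ref{lem-green-estimates}(i) with the source bound \eqref{source-L1} on $[0,t/2]$, and the $L^2$–$L^2$ and $L^2$–$L^\infty$ bounds with \eqref{source-L2} on $[t/2,t]$.
\item To gain the extra $|\xi|$ for the slow mode, we write $g_2=-\mathcal R(a_2)-\operatorname{div}F_2^{\rm div}$ and $g_4=\mathcal R(a_2)-\operatorname{div}F_4^{\rm div}$. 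The divergence parts then move a derivative onto the kernel, as in \eqref{IBP-Green}, using \eqref{vector-source-L1L2}.
\item The undifferentiated pieces $g_1,g_3,\mathcal R(a_2)$ are handled as follows. For $g_1=-\operatorname{div}(a_1u_1)$ the divergence form supplies the factor directly. For $g_3$ and $\pm\mathcal R(a_2)$ we rely on the first and third rows of $P_4$ having order $O(|\xi|^2)$ in the first column, and on the charge cancellation \eqref{low-charge-cancel}.
\item On $[0,t/2]$ the typical estimate reads
\[
\int_0^{t/2}(1+t-\tau)^{-\frac34-\frac12}\Big((\delta\mathcal M+\mathcal M^2)(1+\tau)^{-1}+\varepsilon_Ne^{-\tau/2}\Big)d\tau\lesssim(1+t)^{-\frac54}\log(2+t)\,(\varepsilon_N+\delta\mathcal M+\mathcal M^2)
\]
for the $L^2$ norm of the densities, which is more than enough against the required $(1+t)^{-1/2}$.
\item On $[t/2,t]$ the $(1+\tau)^{-3/2}$ decay from \eqref{source-L2} gives at least $(1+t)^{-1}$ for $\|\nabla a\|_{H^1}$ and for $\|q\|_{H^2}$.
\item The middle part is a convolution of $e^{-\eta_2(t-\tau)}$ with the $L^2$ source rate $(1+\tau)^{-3/2}$.
\item The high part uses \eqref{high-linear-general} with $\ell=5$ and $k\le3$, together with \eqref{source-H8}:
\[
\int_0^t(1+t-\tau)^{-\frac54}\delta\mathcal M(1+\tau)^{-\frac54}d\tau\lesssim\delta\mathcal M(1+t)^{-\frac54}.
\]
\end{itemize}

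\emph{Main obstacle.} The hardest step is the $(1+t)^{-5/4}$ rate for $\mathcal Z$, in particular $\|u_j\|_{L^\infty}$ at low frequency on $[0,t/2]$ (the endpoint of that interval is not critical). The $L^1$ source decays only like $(1+\tau)^{-1}$. If the kernel gives only $(1+t)^{-3/2}\cdot|\xi|^{1}$-type decay in $L^\infty$, the time integral produces a logarithmic loss. We will therefore first try the following route.
\begin{itemize}
\item Work with the velocity $u_j$ directly, through the symbol $\xi|\xi|^{-2}\widehat G_{2\cdot}$, using the divergence-form sources and \eqref{low-kernel-L1inf}. This should yield an $L^\infty$ decay of order $(1+t-\tau)^{-3/2}$ or better, which is integrable against $(1+\tau)^{-1}$ with only a $\log$ loss, still bounded by $(1+t)^{-5/4}$.
\item For the $\mathcal R(a_2)$ contribution, use the $O(|\xi|^2)$ and $O(|\xi|^4)$ orders of the $q$-rows of $P_4$ in the first and third columns. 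Division by $|\xi|$ then still leaves a gain of at least $|\xi|$.
\end{itemize}
Summing all contributions then gives the asserted bound $C\varepsilon_N+C\delta\mathcal M(t)+C\mathcal M(t)^2$.
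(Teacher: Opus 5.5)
Your proposal is correct and is essentially the paper's proof. It uses the same low/middle/high splitting of \eqref{nonlinear-duhamel} and the $t/2$ time splitting with \eqref{source-L1} and \eqref{source-L2} at low frequency. It recovers $u_1,u_2$ from $q_1,q_2$ (plus the exponentially decaying $\omega_2$), with the lost $|\xi|$ absorbed by the $(4,2,2,2)$ orders of the $q$-rows, and it takes $\ell=5$ together with \eqref{source-H8} at high frequency.

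The one inaccuracy is the extra $|\xi|$ you claim for $g_3$ and $\pm\mathcal R(a_2)$ in the density rows. The structures you cite do not give it. These sources enter columns $2$--$4$, where rows $1$ and $3$ of $P_4$ are only $O(1)$ by \eqref{low-P4}. Moreover, \eqref{low-charge-cancel} concerns only the combination $a_1-c^{-1}a_2$. Similarly, $\mathcal R(a_2)$ sits in columns $2$ and $4$ of the $q$-rows, not $1$ and $3$; those entries are $O(|\xi|^2)$, so your velocity conclusion still holds. In any case no such gain is needed. The ungained kernels $(1+t-\tau)^{-3/4}$, $(1+t-\tau)^{-5/4}$, $(1+t-\tau)^{-3/2}$ against the $(1+\tau)^{-1}$ $L^1$ source on $[0,t/2]$ give $(1+t)^{-3/4}\log(2+t)$, $(1+t)^{-5/4}\log(2+t)$, $(1+t)^{-3/2}\log(2+t)$. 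These already lie below the required $(1+t)^{-1/2}$, $(1+t)^{-1}$, $(1+t)^{-5/4}$, which is exactly what the paper uses.
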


\begin{proof}
We estimate the low-, middle-, and high-frequency parts separately.  The constants below are independent of $t$.

\medskip
\noindent\emph{Step 1: low-frequency estimates in $L^2$.} Using the divergence structure of $g_1,g_2$ and $g_4$ in
\eqref{nonlinear-g}, we transfer the outer spatial derivative to the
Green function.
By \eqref{source-L1} and \eqref{vector-source-L1L2},
\begin{align*}
\|F(\tau)\|_{L^1} +\|F_2^{\rm div}(\tau)\|_{L^1} +\|F_4^{\rm div}(\tau)\|_{L^1} \le C\big(\delta\mathcal M(t)+\mathcal M(t)^2\big)(1+\tau)^{-1} +C\varepsilon_Ne^{-\frac{\tau}{2}}.
\end{align*}
Moreover, \eqref{low-G-pointwise} gives the low-frequency orders
\[
\text{rows }1,3:\ (2,0,0,0),
\quad
\text{rows }2,4:\ (4,2,2,2).
\]
Hence Lemma \ref{lem-green-estimates} yields
\begin{align*}
\|(a_1^\ell,a_2^\ell)(t)\|_{L^2} &\le C(1+t)^{-\frac12}\varepsilon_N +C\big(\delta\mathcal M(t)+\mathcal M(t)^2\big) \int_0^t(1+t-\tau)^{-\frac34}(1+\tau)^{-1}\,d\tau\\
&\quad+C\varepsilon_N \int_0^t(1+t-\tau)^{-\frac34}e^{-\frac{\tau}{2}}\,d\tau\\
&\le C(1+t)^{-\frac12} \big(\varepsilon_N+\delta\mathcal M(t)+\mathcal M(t)^2\big),
\end{align*}
where
\[
\int_0^t(1+t-\tau)^{-\frac34}(1+\tau)^{-1}\,d\tau
\le C(1+t)^{-\frac12},
\quad
\int_0^t(1+t-\tau)^{-\frac34}e^{-\frac{\tau}{2}}\,d\tau
\le C(1+t)^{-\frac34}.
\]
For one and two spatial derivatives of the density components,
\begin{align*}
\int_0^t(1+t-\tau)^{-\frac54}(1+\tau)^{-1}\,d\tau  \le C(1+t)^{-1},\quad \int_0^t(1+t-\tau)^{-\frac74}(1+\tau)^{-1}\,d\tau \le C(1+t)^{-1}.
\end{align*}
Therefore,
\[
\|\nabla(a_1^\ell,a_2^\ell)(t)\|_{H^1}
\le C(1+t)^{-1}
\big(\varepsilon_N+\delta\mathcal M(t)+\mathcal M(t)^2\big).
\]
For $q_1^\ell,q_2^\ell$, the second and fourth rows contain at least two low-frequency powers, and
\[
\int_0^t(1+t-\tau)^{-\frac74}(1+\tau)^{-1}\,d\tau
\le C(1+t)^{-1}.
\]
Consequently,
\[
\|(q_1^\ell,q_2^\ell)(t)\|_{H^2}
\le C(1+t)^{-1}
\big(\varepsilon_N+\delta\mathcal M(t)+\mathcal M(t)^2\big).
\]

\medskip
\noindent\emph{Step 2: middle-frequency estimate.}
On the fixed annulus $r_0\le|\xi|\le R_0$, Lemma \ref{lem-green-estimates} gives an exponential semigroup bound.  Since all derivatives are equivalent on this annulus,
\begin{align}\label{middle-nonlinear}
\begin{aligned}
\|U^m(t)\|_{H^3} &\le Ce^{-\eta_2   t}\|U_0\|_{L^2} +C\int_0^te^{-\eta_2  (t-\tau)}\|F(\tau)\|_{L^2}\,d\tau \\
&\le C(1+t)^{-\frac{3}{2}} \big(\varepsilon_N+\delta\mathcal M(t)+\mathcal M(t)^2\big),
\end{aligned}
\end{align}
where \eqref{source-L2} was used in the last line.

\medskip
\noindent\emph{Step 3: high-frequency estimate.} Choose $\ell=5$ in \eqref{regularity-loss-ineq}.  From \eqref{high-G-pointwise}, for $0\le k\le3$,
\[ 
\left\|\nabla^k\int_0^tG^h(t-\tau)*F(\tau)\,d\tau\right\|_{L^2}
\le C\int_0^t(1+t-\tau)^{-\frac{5}{4}}\|F(\tau)\|_{H^{k+5}}\,d\tau.
\] 
Since $k+5\le8$, Lemma \ref{lem-nonlinear-source} yields
\[
\left\|\int_0^tG^h(t-\tau)*F(\tau)\,d\tau\right\|_{H^3} \le C\delta\mathcal M(t) \int_0^t(1+t-\tau)^{-\frac{5}{4}}(1+\tau)^{-\frac{5}{4}}\,d\tau \le C\delta\mathcal M(t)(1+t)^{-\frac{5}{4}}.
\]
The homogeneous high-frequency term is estimated by \eqref{high-linear-general}.  Since {$N\geq 10$} and $q_{i0}=\operatorname{div}u_{i0} (i=1,2)$,
\[
\|U_0\|_{H^8}\le C\varepsilon_N.
\]
Therefore
\begin{align}\label{high-nonlinear-final}
\|U^h(t)\|_{H^3}
\le C(1+t)^{-\frac{5}{4}}\big(\varepsilon_N+\delta\mathcal M(t)\big).
\end{align}

Combining Steps 1--3, we obtain
\begin{align}
\|(a_1,a_2)(t)\|_{L^2}
&\le C(1+t)^{-\frac12}\big(\varepsilon_N+\delta\mathcal M(t)+\mathcal M(t)^2\big),\label{T01}\\
\|\nabla(a_1,a_2)(t)\|_{H^1}
+\|(q_1,q_2)(t)\|_{H^2}
&\le C(1+t)^{-1}\big(\varepsilon_N+\delta\mathcal M(t)+\mathcal M(t)^2\big).\label{T02}
\end{align}

\medskip
\noindent\emph{Step 4: the $W^{1,\infty}$ norm.} We first estimate the low-frequency density components. By \eqref{low-P4}, the slow mode in the first and third rows contains two powers of $|\xi|$ on the initial datum $a_{10}$, but contains no additional low-frequency factor on $a_{20}$. On the other hand, $q_{10}=\operatorname{div}u_{10}$ and $q_{20}=\operatorname{div}u_{20}$ each provide one power of $|\xi|$ through \eqref{IBP-Green}. It follows from the low-frequency $L^2$--$L^\infty$ multiplier estimate that
\[
\left\| \bigl(G^\ell(t)*U_0\bigr)_{(1,3)} \right\|_{W^{1,\infty}} \le C\int_{|\xi|\le 2r_0} e^{-\eta_1 |\xi|^2t} |\widehat a_{20}(\xi)|\,d\xi +C\varepsilon_N(1+t)^{-\frac54}.
\]
Here $(\cdot)_{(1,3)}$ denotes the first and third components. Indeed, the contribution of $a_{10}$ decays at least as $(1+t)^{-7/4}$, while the contributions of $q_{10}$ and $q_{20}$ decay at least as $(1+t)^{-5/4}$. The exponentially stable low-frequency modes are also bounded by $C\varepsilon_N(1+t)^{-5/4}$.

We next estimate the Duhamel term. Applying the full Duhamel formula \eqref{nonlinear-duhamel}, and using \eqref{IBP-Green}, Bernstein's inequality, \eqref{low-kernel-L1inf}--\eqref{low-kernel-L2inf}, and \eqref{source-L1}--\eqref{source-L2}, we obtain
\begin{align*}
\|(a_1^\ell,a_2^\ell)(t)\|_{W^{1,\infty}} &\le C\int_{|\xi|\le 2r_0} e^{-\eta_1 |\xi|^2t} |\widehat a_{20}(\xi)|\,d\xi +C\varepsilon_N(1+t)^{-\frac54} 
+C\varepsilon_N \int_0^t (1+t-\tau)^{-\frac32}e^{-\frac{\tau}{2}}\,d\tau\\
&\quad +C\bigl(\delta\mathcal M(t)+\mathcal M(t)^2\bigr) \left[ \int_0^{\frac{t}{2}} (1+t-\tau)^{-\frac32}(1+\tau)^{-1}\,d\tau +\int_{\frac{t}{2}}^t (1+t-\tau)^{-\frac34}(1+\tau)^{-\frac32}\,d\tau \right].
\end{align*}
The negative Sobolev assumption on $a_{20}$ gives
\[
 \int_{|\xi|\le 2r_0} e^{-\eta_1 |\xi|^2t} |\widehat a_{20}(\xi)|\,d\xi  \le \left( \int_{|\xi|\le 2r_0} |\xi|^2e^{-2\eta_1 |\xi|^2t}\,d\xi \right)^{\frac12} \|a_{20}\|_{\dot H^{-1}} \le C(1+t)^{-\frac54} \|a_{20}\|_{\dot H^{-1}}.
\]
Moreover,
\begin{align*}
\int_0^t (1+t-\tau)^{-\frac32}e^{-\frac{\tau}{2}}\,d\tau &\le C(1+t)^{-\frac32},\\
\int_0^{\frac{t}{2}} (1+t-\tau)^{-\frac32}(1+\tau)^{-1}\,d\tau &\le C(1+t)^{-\frac54},\\
\int_{\frac{t}{2}}^t (1+t-\tau)^{-\frac34}(1+\tau)^{-\frac32}\,d\tau &\le C(1+t)^{-\frac54}.
\end{align*}
Combining these estimates yields
\[
\|(a_1^\ell,a_2^\ell)(t)\|_{W^{1,\infty}} \le C(1+t)^{-\frac54} \bigl( \varepsilon_N +\delta\mathcal M(t) +\mathcal M(t)^2 \bigr).
\]
 
Since $\operatorname{curl}u_1=0$, we get $u_1^\ell=-\nabla(-\Delta)^{-1}q_1^\ell$. Applying $\nabla(-\Delta)^{-1}$ directly to the second row of the full
Duhamel formula \eqref{nonlinear-duhamel}, the low-frequency orders
$(4,2,2,2)$ in \eqref{low-G-pointwise} become $(3,1,1,1)$.  Hence,
by \eqref{low-kernel-L1inf}--\eqref{low-kernel-L2inf},
\eqref{source-L1}--\eqref{source-L2}, and a splitting at $t/2$,
\begin{align*}
\|u_1^\ell(t)\|_{W^{1,\infty}}
&\le C(1+t)^{-\frac54}\varepsilon_N
+C\int_0^{\frac{t}{2}}(1+t-\tau)^{-2}\|F(\tau)\|_{L^1}\,d\tau\\
&\quad+C\int_{\frac{t}{2}}^t(1+t-\tau)^{-\frac54}\|F(\tau)\|_{L^2}\,d\tau
+C\varepsilon_Ne^{-\eta_1  t}
+C\int_0^te^{-\eta_1 (t-\tau)}\|F(\tau)\|_{L^2}\,d\tau\\
&\le C(1+t)^{-\frac54}
\big(\varepsilon_N+\delta\mathcal M(t)+\mathcal M(t)^2\big),
\end{align*}
where the exponentially stable part satisfies the same bound since
$\widehat\chi_\ell(\xi)|\xi|^{-1}\in L^2_\xi$.  Thus
\[
\|u_1^\ell(t)\|_{W^{1,\infty}}
\le C(1+t)^{-\frac54}
\big(\varepsilon_N+\delta\mathcal M(t)+\mathcal M(t)^2\big).
\]

For the second velocity, set $\omega_2=\operatorname{curl}u_2$. By the Helmholtz decomposition,
\[
u_2=-\nabla(-\Delta)^{-1}q_2
+\operatorname{curl}(-\Delta)^{-1}\omega_2.
\]
Since the low-frequency cut-off commutes with these Fourier multipliers,
taking the low-frequency part gives
\[
u_2^\ell=-\nabla(-\Delta)^{-1}q_2^\ell
+\operatorname{curl}(-\Delta)^{-1}\omega_2^\ell.
\]
For the first term, the fourth row of the low-frequency Green matrix
contains at least two powers of $|\xi|$, while
$\nabla(-\Delta)^{-1}$ contributes one factor $|\xi|^{-1}$.
Hence the same low-frequency kernel estimates as above yield
\[
\|\nabla(-\Delta)^{-1}q_2^\ell(t)\|_{W^{1,\infty}}
\le C(1+t)^{-\frac54}
\big(\varepsilon_N+\delta\mathcal M(t)+\mathcal M(t)^2\big).
\]
For the curl part, \eqref{omega2-decay} gives
\begin{align*}
\|\operatorname{curl}(-\Delta)^{-1}\omega_2^\ell(t)\|_{W^{1,\infty}}
&\le C\|\omega_2(t)\|_{H^2}
\le C\varepsilon_Ne^{-\frac{t}{2}}.
\end{align*}
Therefore,
\[
\|u_2^\ell(t)\|_{W^{1,\infty}}
\le C(1+t)^{-\frac54}
\big(\varepsilon_N+\delta\mathcal M(t)+\mathcal M(t)^2\big).
\]

On middle frequencies, the div--curl decomposition, \eqref{middle-nonlinear}, and
\eqref{omega2-decay} yield
\begin{align*}
 \|(a_1^m,a_2^m,u_1^m,u_2^m)(t)\|_{W^{1,\infty}}  \le C\|U^m(t)\|_{H^3}+C\|\omega_2^m(t)\|_{H^2} \le C(1+t)^{-\frac54}
\big(\varepsilon_N+\delta\mathcal M(t)+\mathcal M(t)^2\big).
\end{align*}
For the high-frequency part,
\begin{align*}
\|\nabla u_1^h\|_{H^2}\le C\|q_1^h\|_{H^2},\quad
\|\nabla u_2^h\|_{H^2}\le C\big(\|q_2^h\|_{H^2}+\|\omega_2^h\|_{H^2}\big),
\end{align*}
so \eqref{high-nonlinear-final}, \eqref{omega2-decay}, and
$H^3\hookrightarrow W^{1,\infty}$ imply
\[
\|(a_1^h,a_2^h,u_1^h,u_2^h)(t)\|_{W^{1,\infty}}
\le C(1+t)^{-\frac54}
\big(\varepsilon_N+\delta\mathcal M(t)+\mathcal M(t)^2\big).
\]
Combining the three frequency regions gives
\begin{align}\label{Z-decay}
\mathcal Z(t)
\le C(1+t)^{-\frac54}
\big(\varepsilon_N+\delta\mathcal M(t)+\mathcal M(t)^2\big).
\end{align}
Combining \eqref{T01}, \eqref{T02}, and \eqref{Z-decay} completes the proof.
\end{proof}

%
%
%
%
%
%

\section{Global existence and decay}\label{S6}

We complete the proof of Theorem \ref{thm-main} by combining the nonlinear decay estimate in Lemma \ref{lem-nonlinear-decay} with the high-order energy inequality in Lemma \ref{lem-high-order-energy}.
 
Let $T^*>0$ be the maximal existence time given by
Theorem \ref{thm-local}.  Fix $\delta_*>0$ sufficiently small so that all
estimates in the previous sections hold whenever
\begin{align}\label{section6-bootstrap}
\sup_{0\le \tau\le t}\mathcal E_N(\tau)^{\frac12}\le 2\delta_*.
\end{align}
Set
\[
T_1:=\sup\left\{T<T^*:
\sup_{0\le t\le T}\mathcal E_N(t)^{\frac12}\le 2\delta_*\right\}.
\]
For $\varepsilon_N$ sufficiently small, $T_1>0$ by Theorem \ref{thm-local} and the continuity of the solution.

For every $T<T_1$, Lemma \ref{lem-nonlinear-decay}, with $\delta=\delta_*$, yields
\begin{align}\label{M-close}
\mathcal M(T) \le C_0\varepsilon_N+C_0\delta_*\mathcal M(T) +C_0\mathcal M(T)^2,
\end{align}
where $C_0$ is independent of $T$.  Choose $\delta_*$ so that $C_0\delta_*\le1/8$ and then choose $\varepsilon_N$ sufficiently small. Since $\mathcal M(0)\le C\varepsilon_N$, a standard continuity argument applied to \eqref{M-close} gives the time-independent bound
\begin{align}\label{M-final}
\mathcal M(T)\le C\varepsilon_N, \quad 0\le T<T_1.
\end{align}
Consequently,
\begin{align}\label{final-decay}
\begin{aligned}
\|(a_1,a_2)(t)\|_{L^2} &\le C\varepsilon_N(1+t)^{-\frac12},\\
\|\nabla(a_1,a_2)(t)\|_{H^1} +\|(q_1,q_2)(t)\|_{H^2} &\le C\varepsilon_N(1+t)^{-1},\\
\mathcal Z(t) &\le C\varepsilon_N(1+t)^{-\frac54}, \quad 0\le t<T_1.
\end{aligned}
\end{align}
In particular, since $5/4>1$,
\[
\int_0^{T_1}\mathcal Z(\tau)\,d\tau\le C\varepsilon_N.
\]
Lemma \ref{lem-high-order-energy} and Gr\"onwall's inequality then imply
\begin{align}\label{EN-final}
\sup_{0\le t<T_1}\mathcal E_N(t) \le C\mathcal E_N(0) \exp\left(C\int_0^{T_1}\mathcal Z(\tau)\,d\tau\right) \le C\varepsilon_N^2e^{C\varepsilon_N} \le C\varepsilon_N^2.
\end{align}
Since $\varepsilon_0$ is sufficiently small, this yields
\[
\sup_{0\le t<T_1}\mathcal E_N(t)^{\frac12}\le\delta_*,
\]
which strictly improves \eqref{section6-bootstrap}.  Hence, by continuity, $T_1=T^*$.

Thus \eqref{M-final}--\eqref{EN-final} hold on $[0,T^*)$.  Sobolev embedding and the smallness of $\varepsilon_N$ give
\[
1+a_1(t,x)\ge\frac12, \quad 1+\frac{\gamma-1}{2c}a_2(t,x)\ge\frac12, \quad 0\le t<T^*.
\]
If $T^*<\infty$, the norm $\mathfrak X_N(t)$ remains uniformly bounded and both positivity conditions in the continuation criterion \eqref{continuation-criterion} remain uniformly strict.  Theorem \ref{thm-local} then allows the solution to be extended beyond $T^*$, a contradiction. Consequently $T^*=\infty$, and \eqref{final-decay} gives \eqref{main-decay-statement}.

%
%
%
%
%
%
\section*{Acknowledgments}
The work of the first author was supported by grants from the National Research Foundation of Korea (NRF), funded by the Korean government (MSIP) (Nos. 2022R1A2C1002820 and RS-2024-00406821). The work of the second author was supported by the National Natural Science Foundation of China (Grant No. 12501293, 12671258), the Anhui Provincial Natural Science Foundation (Grant No. 2408085QA031), and the Visiting Scholar Program of the Tianyuan Mathematics Research Center (Grant No. TY-2026-V-006). The work of the third author was supported by the China Scholarship Council (Grant No. 202406880015) and the National Natural Science Foundation of China (Grant No. 12001033).

%
%
%
%
%
%
\appendix
\section{Proof of local well-posedness} \label{appendix-local}

We prove Theorem \ref{thm-local} by a Friedrichs approximation at the regularity level $N\ge10$, preserving both the asymmetric energy structure and the negative Sobolev control of the charge.

\medskip
\noindent\emph{Step 1: the Poisson term and the negative norm.} Set
\[
\alpha=\frac{\gamma-1}{2}.
\]
Since $Q=\rho_1-\rho_2=b-\mathcal R(a_2)$ and $\Delta\Phi=Q$, the
Fourier multiplier representation gives
\begin{align}\label{local-poisson-est}
\|\nabla\Phi\|_{H^N}
=\|\nabla(-\Delta)^{-1}Q\|_{H^N}
\le C\bigl(\|Q\|_{\dot H^{-1}}+\|Q\|_{H^{N-1}}\bigr).
\end{align}
Indeed, the low-frequency part is controlled by
$\|Q\|_{\dot H^{-1}}$, while for $|\xi|\ge1$,
\[
(1+|\xi|^2)^N|\xi|^{-2}
\le C(1+|\xi|^2)^{N-1}.
\]

On a set where
\[
1+\frac{\gamma-1}{2c}a_2\ge\kappa>0,
\quad
\|a_2\|_{H^N}\le K,
\]
the map $a_2\mapsto\mathcal R(a_2)$ defined above is smooth. Hence the Moser composition estimate yields
\begin{align}\label{local-R-HN}
\|\mathcal R(a_2)\|_{H^{N-1}} \le C_{K,\kappa}\|a_2\|_{H^{N-1}}.
\end{align}
Moreover, $\mathcal R(0)=\mathcal R'(0)=0$, and hence
\[
|\mathcal R(a_2)| \le C_{K,\kappa}|a_2|^2.
\]
Since $L^{6/5}(\mathbb R^3)\hookrightarrow\dot H^{-1}(\mathbb R^3)$ and $H^1(\mathbb R^3)\hookrightarrow L^{12/5}(\mathbb R^3)$, we have
\begin{align}\label{local-R-negative}
\|\mathcal R(a_2)\|_{\dot H^{-1}}
&\le C\|\mathcal R(a_2)\|_{L^{\frac65}}
\le C_{K,\kappa}\|a_2\|_{L^{\frac{12}{5}}}^2
\le C_{K,\kappa}\|a_2\|_{H^1}^2.
\end{align}
It follows that
\begin{align}\label{local-Q-from-b}
\|Q\|_{\dot H^{-1}} \le \|b\|_{\dot H^{-1}} +C_{K,\kappa}\|a_2\|_{H^1}^2,\quad \|Q\|_{H^{N-1}} \le \|b\|_{H^{N-1}} +C_{K,\kappa}\|a_2\|_{H^{N-1}}.
\end{align}
Thus \eqref{local-poisson-est} is controlled on bounded positive sets by $\|b\|_{\dot H^{-1}}$, $\|b\|_{H^{N-1}}$, and $\|a_2\|_{H^N}$. 

\medskip
\noindent\emph{Step 2: Friedrichs approximation and the uniform high-order estimate.} For later use, set $\mathbf V=(b,u_1,a_2,u_2)$. For every integer $1\le \ell\le N$, define
\[
\|\mathbf V\|_{\mathscr H_\ell}^2 :=\|b\|_{\dot H^{-1}}^2 +\|b\|_{H^{\ell-1}}^2 +\|u_1\|_{H^\ell}^2 +\|a_2\|_{H^\ell}^2 +\|u_2\|_{H^\ell}^2.
\]

Let $J_m$ be a Friedrichs Fourier cutoff. We apply $J_m$ to each nonlinear product and composition in \eqref{main2}, and approximate the initial data by smooth data in the spaces of Theorem \ref{thm-local}. The density lower bounds are chosen to be at least $\kappa_0/2$. For fixed $m$, the cutoff makes the spatial derivative operators bounded on ${\rm Ran}\,J_m$, and the regularized vector field is locally Lipschitz in the corresponding Sobolev space. Therefore, the Banach-space Picard theorem gives a unique smooth approximate solution on a nontrivial interval. Since $J_m$ is self-adjoint and commutes with spatial derivatives, $\Lambda^s$, and the Riesz transforms, the cancellations used below remain valid for the approximate system.

For an $m$-independent lifespan, we estimate $a_1$ one derivative below $u_1$. Using $a_1=b+c^{-1}a_2$, the equations can be rewritten as
\begin{equation}\label{local-b-system}
\left\{
\begin{aligned}
&\partial_tb+\operatorname{div}(u_1-u_2)  +u_1\cdot\nabla b=H_b,\\
&\partial_tu_1+\mathscr R\Lambda^{-1}b  +u_1\cdot\nabla u_1  =\mathscr R\Lambda^{-1}\mathcal R(a_2),\\
&\partial_ta_2+c\operatorname{div}u_2  +u_2\cdot\nabla a_2  +\alpha a_2\operatorname{div}u_2=0,\\
&\partial_tu_2+c\nabla a_2-\mathscr R\Lambda^{-1}b+u_2  +u_2\cdot\nabla u_2+\alpha a_2\nabla a_2  =-\mathscr R\Lambda^{-1}\mathcal R(a_2),
\end{aligned}
\right.
\end{equation}
where $\mathscr R=\nabla\Lambda^{-1}$ and
\[
H_b= -\frac1c(u_1-u_2)\cdot\nabla a_2 -b\operatorname{div}u_1 -\frac1c a_2\operatorname{div}u_1 +\frac{\alpha}{c}a_2\operatorname{div}u_2.
\]

For $1\le s\le N$, apply $\Lambda^{s-1}$ to the first equation of
\eqref{local-b-system} and $\Lambda^s$ to the remaining three
equations. Taking the $L^2$ inner products with
$\Lambda^{s-1}b$, $\Lambda^su_1$, $\Lambda^sa_2$, and
$\Lambda^su_2$, respectively, gives two exact linear cancellations:
\begin{align*}
&c\langle\Lambda^s\operatorname{div}u_2,\Lambda^sa_2\rangle +c\langle\Lambda^s\nabla a_2,\Lambda^su_2\rangle=0,\notag\\
&\langle\mathscr R\Lambda^{s-1}b,  \Lambda^s(u_1-u_2)\rangle +\langle\mathscr R\!\cdot\!\Lambda^s(u_1-u_2),  \Lambda^{s-1}b\rangle=0.
\end{align*}
The first identity follows by integration by parts, and the second uses the skew-adjointness of the Riesz transforms.

At the highest order, the potentially derivative-losing terms are $a_2\operatorname{div}u_2$ and $a_2\nabla a_2$. Their principal parts cancel:
\[
 \int a_2\Lambda^s\operatorname{div}u_2\,\Lambda^sa_2\,dx +\int a_2\nabla\Lambda^sa_2\cdot\Lambda^su_2\,dx  =-\int\nabla a_2\cdot\Lambda^su_2\,\Lambda^sa_2\,dx.
\]
All remaining terms are commutators or lower-order products. We use
\begin{align*}
\|[\Lambda^s,f]g\|_{L^2} &\le C\bigl( \|\nabla f\|_{L^\infty}\|\Lambda^{s-1}g\|_{L^2} +\|\Lambda^sf\|_{L^2}\|g\|_{L^\infty} \bigr),\\
\|fg\|_{H^s} &\le C\bigl( \|f\|_{L^\infty}\|g\|_{H^s} +\|g\|_{L^\infty}\|f\|_{H^s} \bigr)
\end{align*}
to control all differentiated nonlinear terms.

At zero order, apply $\Lambda^{-1}$ to the first equation in \eqref{local-b-system}, pair it with $\Lambda^{-1}b$, and pair the remaining three equations with $u_1,a_2,u_2$, respectively. The Poisson terms cancel since
\[
\left\langle \Lambda^{-1}\operatorname{div}(u_1-u_2), \Lambda^{-1}b \right\rangle + \left\langle \mathscr R\Lambda^{-1}b,u_1-u_2 \right\rangle =0,
\]
while the acoustic terms cancel by integration by parts:
\[
c\langle\operatorname{div}u_2,a_2\rangle +c\langle\nabla a_2,u_2\rangle=0.
\]

At this level the nonlinear terms in the $b$ equation must be estimated in $\dot H^{-1}$ rather than merely in $L^2$. We use the Sobolev dual estimate $\|f\|_{\dot H^{-1}} \le C\|f\|_{L^{\frac65}}$. The nonlinear terms satisfy
\begin{align*}
\|u_1\cdot\nabla b\|_{\dot H^{-1}} &\le C\|u_1\|_{L^3}\|\nabla b\|_{L^2},\quad  \|(u_1-u_2)\cdot\nabla a_2\|_{\dot H^{-1}} \le C\|u_1-u_2\|_{L^3}\|\nabla a_2\|_{L^2},\\
\|b\operatorname{div}u_1\|_{\dot H^{-1}} &\le C\|b\|_{L^2}\|\nabla u_1\|_{L^3},\quad \|a_2\operatorname{div}u_j\|_{\dot H^{-1}} \le C\|a_2\|_{L^2}\|\nabla u_j\|_{L^3},
\quad j=1,2.
\end{align*}
Since $H^1(\mathbb R^3)\hookrightarrow L^3(\mathbb R^3)$ and all the positive Sobolev norms on the right are bounded on the local bootstrap set, these inequalities, together with \eqref{local-R-HN}--\eqref{local-Q-from-b}, control the pairing of the $\Lambda^{-1}$-equation with $\Lambda^{-1}b$. Therefore, on the same bounded positive set,
\begin{align}\label{local-zero-order-energy}
\begin{aligned}
&\frac{d}{dt}\Bigl( \|u_1\|_{L^2}^2+\|a_2\|_{L^2}^2+\|u_2\|_{L^2}^2 +\|b\|_{\dot H^{-1}}^2 \Bigr) +\|u_2\|_{L^2}^2\\
&\quad\le C_{K,\kappa}\Bigl( \|b\|_{\dot H^{-1}}^2+\|b\|_{H^{N-1}}^2 +\|u_1\|_{H^N}^2+\|a_2\|_{H^N}^2 +\|u_2\|_{H^N}^2 \Bigr).
\end{aligned}
\end{align}

Therefore, combining \eqref{local-zero-order-energy} with the estimates for $1\le s\le N$ gives, on every bootstrap set where the Sobolev norms are at most $K$ and the positive coefficients are bounded below by $\kappa$,
\begin{align}\label{local-high-energy}
\begin{aligned}
&\frac{d}{dt}\Bigl( \|b\|_{\dot H^{-1}}^2+\|b\|_{H^{N-1}}^2 +\|u_1\|_{H^N}^2+\|a_2\|_{H^N}^2 +\|u_2\|_{H^N}^2 \Bigr) +\|u_2\|_{H^N}^2\\
&\quad\le C_{K,\kappa}\Bigl( \|b\|_{\dot H^{-1}}^2+\|b\|_{H^{N-1}}^2 +\|u_1\|_{H^N}^2+\|a_2\|_{H^N}^2 +\|u_2\|_{H^N}^2 \Bigr).
\end{aligned}
\end{align}
The estimate uses only $b\in\dot H^{-1}\cap H^{N-1}$ and hence only $a_1\in H^{N-1}$ at positive Sobolev order.

Set $\widetilde Y_N(t):=\|\mathbf V(t)\|_{\mathscr H_N}$. Then \eqref{local-high-energy} reads
\begin{align}\label{local-YN-ineq} 
\frac{d}{dt}\widetilde Y_N^2(t) +\|u_2(t)\|_{H^N}^2 \le C_{K,\kappa}\widetilde Y_N^2(t).
\end{align}
Since $a_1=b+c^{-1}a_2$, the energy norm
$\|\mathbf V\|_{\mathscr H_N}$ is equivalent, up to fixed
constants, to the continuation norm in
\eqref{continuation-norm}. Moreover, \eqref{local-Q-from-b} recovers
the exact charge and the Poisson field from the components controlled
by this norm.

Let
\[
K=2C\bigl(1+\widetilde Y_N(0)\bigr).
\]
Gr\"onwall's inequality applied to \eqref{local-YN-ineq} gives a time $T_0=T_0(\widetilde Y_N(0),\kappa_0)>0$, independent of $m$, on which
\[
\widetilde Y_N(t)\le K.
\]
By taking $T_0$ smaller and using the time derivative bounds supplied by \eqref{local-b-system}, the two positive coefficients remain at least $\kappa_0/4$. Thus the approximate solutions exist uniformly on $[0,T_0]$ and satisfy bounds independent of $m$.

\medskip
\noindent\emph{Step 3: passage to the limit and time regularity.} The uniform estimates give weak-* compactness in the corresponding $L^\infty$-in-time Sobolev spaces. The equations also yield
\[
\partial_ta_1 \quad\hbox{bounded in }H^{N-2}, \quad \partial_t(u_1,a_2,u_2) \quad\hbox{bounded in }H^{N-1}.
\]
Local compactness and a diagonal argument on $\mathbb R^3$ give strong convergence in lower Sobolev norms, which is sufficient to pass to the nonlinear terms.

The limit solves \eqref{main2}. Since
\[
Q=b-\mathcal R(a_2),
\]
it also solves the original system. Therefore, it satisfies the exact charge equation
\begin{equation}\label{local-charge-equation}
\partial_tQ
+\operatorname{div}(\rho_1u_1-\rho_2u_2)=0.
\end{equation}

The compactness argument above initially gives only weak continuity at the top Sobolev indices. Strong continuity in the positive-order Sobolev spaces follows from the standard smoothing argument based on Friedrichs approximations; see \cite{BS75,K75}. Indeed, the uniform high-order estimate \eqref{local-YN-ineq}, together with the difference estimate obtained by repeating the same energy argument one derivative lower, allows one to apply this standard approximation argument. We obtain
\[
b\in C([0,T_0];H^{N-1}), \quad (u_1,a_2,u_2)\in C([0,T_0];H^N).
\]
Since $a_1=b+c^{-1}a_2$, it follows that
\[
a_1\in C([0,T_0];H^{N-1}).
\]
The equations and the Sobolev product estimates then yield
\[
\partial_ta_1\in C([0,T_0];H^{N-2}), \quad \partial_t(u_1,a_2,u_2) \in C([0,T_0];H^{N-1}).
\]

For completeness, the strong continuity of the negative-order component can also be verified directly from \eqref{local-charge-equation}:
\[
\Lambda^{-1}Q_t = -\Lambda^{-1}\operatorname{div} (\rho_1u_1-\rho_2u_2) \in C([0,T_0];L^2),
\]
since $\Lambda^{-1}\operatorname{div}$ is an order-zero multiplier and $\rho_ju_j\in C([0,T_0];L^2)$, $j=1,2$. Hence
\[
Q\in C([0,T_0];\dot H^{-1}).
\]
The difference version of \eqref{local-R-negative} shows that the composition map is locally Lipschitz from bounded positive subsets of $H^N$ into $\dot H^{-1}$. Since $a_2\in C([0,T_0];H^N)$, we have
\[
\mathcal R(a_2)
\in C([0,T_0];\dot H^{-1}).
\]
Recalling that $b=Q+\mathcal R(a_2)$, we conclude that
\[
b\in C([0,T_0];\dot H^{-1}).
\]
Estimate \eqref{local-poisson-est}, together with the positive-order continuity established above, gives
\[
\nabla\Phi\in C([0,T_0];H^N).
\]

Positivity follows directly from the continuity equations. If $X_j(t;x)$ denotes the flow generated by $u_j$, then
\[
\rho_j(t,X_j(t;x)) = \rho_{j0}(x) \exp\left( -\int_0^t \operatorname{div}u_j(\tau,X_j(\tau;x))\,d\tau \right), \quad j=1,2.
\]
Hence strictly positive initial densities remain strictly positive
as long as the classical solution exists.

\medskip
\noindent\emph{Step 4: uniqueness and continuous dependence.} Let $\mathbf V$ and $\widetilde{\mathbf V}$ be two solutions satisfying the above bounds on $[0,T_0]$, and set
\[
\delta\mathbf V = \mathbf V-\widetilde{\mathbf V} = (\delta b,\delta u_1,\delta a_2,\delta u_2).
\]
Subtracting their equations and applying the same energy method one derivative lower gives
\[
\frac{d}{dt}\mathcal D(t) \le C_{K,\kappa}\mathcal D(t), \quad \mathcal D(t) := \|\delta\mathbf V(t)\|_{\mathscr H_{N-1}}^2.
\]
The corresponding exact-charge difference is recovered from
\[
\delta Q = \delta b -\bigl(\mathcal R(a_2) -\mathcal R(\widetilde a_2)\bigr)
\]
by the locally Lipschitz composition estimate. If the initial data coincide, Gr\"onwall's inequality gives $\mathcal D(t)\equiv0$, and hence uniqueness.

Continuous dependence in the topology of \eqref{local-class} follows from the same smoothing argument based on Friedrichs truncations. Indeed, approximate both initial data sets by smooth truncations, use the uniform high-order estimate \eqref{local-YN-ineq} for the truncated solutions, and apply this lower-order stability estimate to compare two truncation levels. The high-frequency tails are controlled by the high-order estimate and the corresponding tails of the initial data. Passing first in the truncation parameter and then in the initial-data difference gives continuity of the solution map in the spaces stated in Theorem \ref{thm-local}.

\medskip
\noindent\emph{Step 5: continuation.} Assume that $T^*<\infty$ and that \eqref{continuation-criterion} holds. Then there exist $K<\infty$ and $\kappa>0$ such that, for every $t<T^*$,
\[
\mathfrak X_N(t)\le K, \quad 1+a_1(t,x)\ge\kappa, \quad 1+\frac{\gamma-1}{2c}a_2(t,x)\ge\kappa.
\]
The construction above shows that the local lifespan starting from any such time is bounded from below by a number $T_1=T_1(K,\kappa)>0$ independent of the starting time. Choose $t_0<T^*$ such that
\[
T^*-t_0<\frac{T_1}{2}
\]
and restart the Cauchy problem at $t=t_0$. The resulting local solution exists on $[t_0,t_0+T_1]$ and, by uniqueness, agrees with the original solution on their common interval. Since $t_0+T_1>T^*$, this extends the solution beyond its maximal existence time, a contradiction. This completes the proof of Theorem \ref{thm-local}.

%
%
%
%

\bibliographystyle{abbrv}
\bibliography{reference}

\end{document}